\theoremstyle{plain}
\theoremstyle{remark}
\newcommand{\cM}{\mathcal{M}}
\newcommand{\R}{\mathbb{R}}
\newcommand{\E}{\mathbb{E}}
\newcommand{\Z}{\mathbb{Z}}
\newcommand{\N}{\mathbb{N}}
	\renewcommand{\P}{\mathbb{P}}
	\renewcommand{\H}{\mathbb{H}}
\newtheorem{thm}{Theorem}[section]
\newtheorem{lem}{Lemma}[section]
\newtheorem{ppn}[thm]{Proposition}
\theoremstyle{definition}
\newtheorem{defn}{Definition}[section]
\newtheorem{remark}{Remark}[section]
\DeclareMathOperator{\Var}{Var}
\begin{document}

\begin{frontmatter}
\title{Inference in Ising models on dense regular graphs}
\runtitle{Inference in Ising models on dense regular graphs}

\begin{aug}
\author[A]{\fnms{Yuanzhe}~\snm{Xu}\ead[label=e1]{yuanzhe.xu@columbia.edu}}
\and
\author[B]{\fnms{Sumit}~\snm{Mukherjee}\ead[label=e2]{sm3949@columbia.edu}}
\address[A]{Department of Statistics,
Columbia University\printead[presep={,\ }]{e1}}
\address[B]{Department of Statistics,
Columbia University\printead[presep={,\ }]{e2}}
\end{aug}

\begin{abstract}
In this paper, we derive the limit of experiments for one parameter Ising models on dense regular graphs. In particular, we show that the limiting experiment is Gaussian in the \enquote{low temperature} regime, and non Gaussian in the \enquote{critical} regime. We also derive the limiting distributions of the maximum likelihood and maximum pseudo-likelihood estimators, and study limiting power for tests of hypothesis against contiguous alternatives. To the best of our knowledge, this is the first attempt at establishing the classical limits of experiments for Ising models (and more generally, Markov random fields).
\end{abstract}

\begin{keyword}[class=MSC]
\kwd[Primary ]{62H22}
\kwd[; secondary ]{62E20}
\kwd{62F05}
\kwd{62F12}
\end{keyword}

\begin{keyword}
\kwd{Ising model}
\kwd{Limits of experiments}
\kwd{Phase transition}
\kwd{Asymptotic power}
\kwd{Asymptotic efficiency}
\end{keyword}

\end{frontmatter}

\section{Introduction}
The Ising Model is possibly the most well-known discrete graphical model, originating in statistical physics (see \cite{harris1974effect, ising1925beitrag, onsager1944crystal}), and henceforth studied in depth across several disciplines, including statistics and machine learning (c.f.~\cite{berthet2019exact, bhattacharya2018inference, deb2020detecting, deb2020fluctuations,  ghosal2020joint, mukherjee2018global} and the references therein). Under this model, we observe a vector of dependent Rademacher (i.e.~$\pm1$ valued) random variables, where the dependency is controlled by a coupling matrix, and an \enquote{inverse temperature} parameter $\theta> 0$ (borrowing statistical physics terminology).  Very often this coupling matrix is taken to be the (scaled) adjacency matrix of a graph. Some of the common graph ensembles on which the Ising model has been studied include the complete graph (the corresponding Ising model is known as the Curie-Weiss model), the $d$ dimensional grid, Erdos-R\'enyi graphs, and random regular graphs (\cite{dembo2010ising, ellis1978statistics, kabluchko2019fluctuations, onsager1944crystal}). Note that all the graph ensembles in the above list are (approximately) regular graphs. 
\\

In this paper, we will study the behavior of Ising models on a sequence of \enquote{dense} regular graphs converging in cut metric (see section \ref{sec:graphon} for a brief introduction to the theory of dense graphs/graphons).
Given an Ising model on a dense regular graph parametrized by the inverse temperature parameter $\theta> 0$ (see~\eqref{Ising}), we study limits of experiments in the sense of Lucien Le Cam (\cite{le1972limits}, see also \cite{van2000asymptotic}). In particular, we show that the Ising model is locally asymptotically normal (LAN) in the low temperature regime ($\theta>1$), whereas the limiting experiment is very different in the critical ($\theta=1$) temperature regime. Using this framework, we derive the limiting power of tests involving the parameter $\theta$, based on the maximum likelihood estimate, pseudo-likelihood estimate, and the sample mean, across all regimes of $\theta$. We also study asymptotic limiting distributions of the maximum likelihood estimate and pseudo-likelihood estimate in the regime  $\theta\ge 1$ (where consistent estimation of $\theta$ is possible), and compare their asymptotic performances. Prior to our work, limit distribution of the maximum likelihood estimate was known only for the Curie-Weiss model (\cite{comets1991asymptotics}), and limit distribution for the pseudo-likelihood estimator was not known in any example (to the best of our knowledge). Thus, we give a complete toolbox for inference regarding the parameter $\theta$, for Ising model on dense graphs.


\section{Main results}
In this section we formally introduce the Ising model (section \ref{sec:formal}), and state our main results (section \ref{Results}), which are essentially of three types, (i) limits of experiments, (ii) asymptotic performance of estimators, and (iii) asymptotic performance of tests of hypothesis. We recall the notion of limits of experiments in section \ref{sec:limits}. To obtain convergence in experiments, we require the sequence of coupling matrices for the Ising model to converge in cut metric, a notion which we recall in section \ref{sec:graphon}. We also introduce the estimators and test statistics that we study in sections \ref{sec:est} and \ref{sec:ht} respectively. Section \ref{sec:example} illustrates our results with two concrete examples. Finally, section \ref{sec:222} discusses the main contributions of this paper, and possible avenues of future research.

\subsection{Formal set up}\label{sec:formal}

Let $n$ be a positive integer, and let $Q_n$ be a (known) symmetric $n\times n$ matrix with non-negative entries, and $0$ on the diagonal. Let $\theta\ge 0$ be an unknown real valued parameter. Then the Ising model with inverse temperature parameter $\theta$ and coupling matrix $Q_n$ is a probability distribution on $\{-1,1\}^n$, defined by the probability mass function
\begin{equation}\label{Ising}
\P_{\theta,Q_{n}}(\mathbf{X}=\mathbf{x}):=\exp{\Big(\frac{\theta}{2} \mathbf{x}^{T}Q_{n}\mathbf{x} - Z_{n}(\theta,Q_{n})\Big)}, \text{ for }{\mathbf x}\in \{-1,1\}^n.
\end{equation}

Here $Z(\theta,Q_n)$ is the log normalizing constant which makes \eqref{Ising} into a probability distribution. One of the major challenges in analyzing Ising models is that $Z_n(\theta,Q_n)$ is typically intractable, both analytically and computationally. Consequently, computing and analyzing the maximum likelihood estimate is extremely challenging. 

Throughout the paper we will assume that the matrix $Q_n$ satisfies the following assumptions:
\begin{itemize}
  \item The matrix $Q_{n}$ is regular, i.e.~setting $[n]:=\{1,2,\ldots,n\}$ we have
 \begin{align}
 \label{RIC}\sum_{j=1}^nQ_n(i,j)=1,\text{ for all }i\in [n].
 \end{align}
 \item
 There exists a finite positive constant $C_w$ free of $n$ such that
 \begin{align}
\label{CWC}
    \max_{i,j\in [n]}Q_n(i,j)<\frac{C_{w}}{n}.
\end{align}

\item

The Frobenius norm of $Q_n$ converges, i.e.
\begin{equation}\label{eq:frob}
    ||Q_{n}||_{F}:=\sqrt{\sum_{i,j=1}^{n}Q^2_n(i,j)}\to \gamma.
\end{equation}
\end{itemize}

\subsection{Graphon Convergence}\label{sec:graphon}

Below we will briefly introduce some of the basics of cut metric theory needed for our purposes, referring the audience for more details to \cite{borgs2008convergent, borgs2012convergent, lovasz2012large}.

By a \textbf{graphon}, we will mean a symmetric bounded measurable function $f:[0,1]^{2}\rightarrow[0,C_w]$. Let $\mathcal{W}$ denote the space of all graphons. Equip the space $\mathcal{W}$ by the cut distance, defined by
\[d_{\square}(f,g):=\sup_{S,T\subset [0,1]}\Big|\int_{S\times T}(f(x,y)-g(x,y))dx dy\Big|.\]
Let $\mathcal{M}$ denote the space of all measurable measure preserving maps $\iota:[0,1]\mapsto [0,1]$. Define an equivalence relation $\sim$ on the space $\mathcal{W}$ by setting
\[f\sim g\quad \text{ if }\quad  f(x,y)\stackrel{a.s.}{=}g_\iota(x,y):=g(\iota(x),\iota(y)),\text{ for some }\iota\in \cM.\]
Let $\tilde{\mathcal{W}}$ denote the quotient space $\mathcal{W}/\sim$ under the above equivalence relation. Equip $\tilde{\mathcal{W}}$ with the cut metric, defined as follows:
\[\delta_\square(\tilde{f},\tilde{g}):=\inf_{\iota\in \cM}d_\square(f,g_\iota)=\inf_{\iota\in \cM}d_\square(f_\iota,g)=\inf_{\iota_1,\iota_2\in \cM}d_\square(f_{\iota_1},g_{\iota_2}).\]
Then it follows from \cite{borgs2008convergent} that $\delta_\square$ is well defined, and  $(\tilde{\mathcal{W}},\delta_\square)$ is a compact metric space. We say a sequence of graphons $\{f_n\}_{n\ge 1}$ converge to a graphon $f$ in cut metric, if
$$\delta_\square(f_n,f)\to 0.$$
Given a symmetric $n\times n$ matrix $A_n$ with $0$ on the diagonal, define a corresponding graphon $f^{A_n}$ by setting
$$f^{A_n}(x,y):=A_{\lceil nx\rceil, \lceil ny \rceil}.$$
We say  $\{A_n\}_{n\ge 1}$ converge in cut metric to a graphon $f$, if the corresponding sequence of graphons $\{f^{A_n}\}_{n\ge 1}$ converge to $f$ in cut metric, i.e.
$$\delta_\square(f^{A_n},f)\to 0.$$

Given $f\in \mathcal{W}$, define a Hilbert-Schmidt operator $T_f$ from $L_2[0,1]$ to $L_2[0,1]$ by setting
$$T_f(g)(\cdot):=\int_{[0,1]}f(\cdot,y)g(y)dy.$$
This is a compact operator, and hence it has (at most) countably many eigenvalues. Let $\{\lambda_j\}_{j\ge 1}$ be the eigenvalues of $T_f$, arranged in decreasing order of absolute value, i.e.$$|\lambda_1|\ge |\lambda_2|\ge \cdots.$$

Throughout the paper we assume that the sequence of matrices $\{nQ_n\}_{n\ge 1}$ converge in cut metric to a graphon $f\in \mathcal{W}$ such that $\sup_{i\ge 2}\lambda_i<\lambda_1$, i.e.
\begin{align}\label{eq:cut}
\delta_\square(f^{nQ_n},f)\to 0, \quad \sup_{i\ge 2}\lambda_i<\lambda_1.
\end{align}
\textcolor{black}{Note that \eqref{RIC} and \eqref{eq:cut} together imply $\lambda_1=1$ (see \cite[Theorem 11.54]{lovasz2012large}), and so we require $\sup_{i\ge 2}\lambda_i< 1$. This last requirement essentially demands that the eigenvalue $1$ is simple, i.e.~it has multiplicity $1$.  We point out that we allow for the possibility that $|\lambda_i|=1$ for some $i$, which can only happen if $\lambda_i=-1$. And indeed, this happens for regular connected bipartite graphs (see \eqref{eq:bipartite} for an example).} We emphasize that we do not require expander type assumptions typically made in the graph theory literature, which requires $\max_{i\ge 2}|\lambda_i|<1$.   A similar spectral gap as in \eqref{eq:cut} was utilized in \cite[Equation 1.7]{deb2020fluctuations}, where the authors study universal limiting distribution of $\bar{\mathbf X}$ for Ising models on dense regular graphs. In particular, it was shown in \cite[Example 1.1]{deb2020fluctuations} that universality can fail without such an assumption. The same counter-example works in our setting as well, and demonstrates that the limiting experiment may be different without this assumption. 
\\

\begin{remark}
\textcolor{black}{
We now briefly discuss the assumptions on $Q_n$ made in this paper. Throughout we have assumed that $Q_n$ is non-negative entry-wise (and $\theta\ge 0$), which ensures that the resulting model has positive association (ferro-magnetic, in statistical physics parlance). We expect that the limiting experiment (and performance of estimators/tests) under the model \eqref{Ising} will be very different in the anti-ferromagnetic regime ($\theta<0$), or for spin glass models ($Q_n$ can take both positive and negative entries). This intuition is guided by the fact that the asymptotics of the log normalizing constant $Z_n(\theta,Q_n)$ is no longer universal under these settings (see \cite[Theorem 2.3]{basak2017universality} and  \cite[Section 1.3.2]{basak2017universality} respectively).}
\\

\textcolor{black}{The first condition \eqref{RIC} demands that the row sums of $Q_n$ are all equal. The fact that the common value of these row sums equal $1$ is just a normalization, which ensures that the \enquote{critical value} for the corresponding Ising model in \eqref{Ising} is at 1.  For non-regular matrices, we expect the limit experiment to be different, and possibly non-universal. The second condition \eqref{CWC}, which demands that all the entries of the matrix $nQ_n$ are uniformly bounded, is in essence a compactness assumption. This ensures that the sequence of functions $\{f^{nQ_n}\}_{n\ge 1}$ is uniformly bounded. Without this assumption, we again do not expect universality of the limit experiment. Assumption \eqref{CWC} also implies that assumptions \eqref{eq:frob} and \eqref{eq:cut} hold along subsequences (since $\|Q_n\|_F$ is bounded, and the space of bounded functions is compact in cut metric, as mentioned above in the graphon section). Thus \eqref{eq:frob} and \eqref{eq:cut} are in essence free assumptions, given \eqref{CWC}. In fact, convergence in cut metric (i.e.~\eqref{eq:cut}) is not needed if we only want to study the limiting experiment. The main requirement of \eqref{eq:cut} is due to the fact that we want to characterize the limit distribution of the pseudo-likelihood estimator in the critical regime.}
\\
\\
To understand the exact nature of the assumptions \eqref{RIC}, \eqref{CWC}, \eqref{eq:frob} and \eqref{eq:cut}, it is instructive to consider the commonly studied case where $Q_n$ is a scaled adjacency matrix of a graph on $n$ vertices, defined as follows:
\\
\\
Let $G_n$ be a simple labeled graph on $n$ vertices, labeled by the set $[n]$. Abusing notation slightly, we also denote by $G_n$ the adjacency matrix of the graph. Then one takes $Q_n=\frac{1}{\bar{d}}G_n$, where $\bar{d}:=\frac{1}{n}\sum_{i=1}^nd_i$ is the average degree of the graph $G_n$, and $(d_1,\ldots,d_n)$ is the labeled degree sequence of the graph $G_n$. This particular choice ensures that the resulting model is non trivial (see \cite[Corollary 1.2]{basak2017universality}). For the above choice, the assumptions \eqref{RIC} and \eqref{CWC} reduce to the following:
\begin{align*}
d_i=\sum_{j=1}^nG_n(i,j)=&\bar{d},\text{ for all }i\in [n],\quad \text{ and }\quad 
\bar{d}\ge \frac{n}{C_W}.
\end{align*}
The first assumption demands that the graph $G_n$ is regular, and the second assumption demands that the degree of $G_n$ grows linearly in $n$, i.e.~the graph $G_n$ is dense. The fourth assumption demands the convergence of the graph $G_n$ in cut metric to the function $f$. In this case, the third assumption \eqref{eq:frob} follows from \eqref{eq:cut}, as
\begin{align*}
    \sum_{i,j=1}^nQ_n^2(i,j)=\frac{1}{\bar{d}_n^2}\sum_{i,j=1}^nG_n(i,j)=\frac{n}{\bar{d}_n}\to \frac{1}{\int_{[0,1]}f(x,y)dx dy}.
\end{align*}
Thus our results apply to Ising models on dense regular graphs converging in cut metric (i.e.~under  \eqref{RIC}, \eqref{CWC} and \eqref{eq:cut}). 
\\

\end{remark}

\textcolor{black}{Below we give some concrete examples of matrix sequences $\{Q_n\}_{n\ge 1}$ which arise as scaled adjacency matrices of commonly studied graphs, for which our results apply. \begin{itemize}
\item
Suppose
\begin{align}\label{eq:cw2}
 Q_n(i,j):=\frac{1}{n}\text{ if }i\ne j.
 \end{align}
 This is the adjacency matrix of the complete graph $G_n=K_n$ on $n$ vertices, i.e.~$G_n(i,j)=1$ for all $i,j\in [n]$ such that $i\ne j$. This is a regular graph of degree $d_i=n-1$. In this case, the corresponding model in \eqref{Ising} is the well known Curie-Weiss model, given by
 \begin{align}\label{eq:cw}
\P_{\theta, {\rm CW}}({\mathbf X}={\mathbf x})=\exp\Big( \frac{n\theta\bar{x}^2}{2}-Z_n(\theta,{\rm CW})\Big).
\end{align}
 We note here that the above definition for $Q_n$ in \eqref{eq:cw2} does not exactly satisfy \eqref{RIC}, as $\sum_{j=1}^nQ_n(i,j)=\frac{n-1}{n}<1$. To satisfy \eqref{RIC}, we could have set $Q_n(i,j)=\frac{1}{n-1}$ if $i\ne j$. Nevertheless, it is slightly convenient to specify $Q_n$ using \eqref{eq:cw2}, as the eventual formulas are cleaner. Also, it is not hard to show that the Ising models obtained by the two different specifications of $Q_n$ are close in total variation, so all results carry over effortlessly between the two models.
The limiting graphon for $f_{nQ_n}$ in both these cases is the constant function $f=1$, with only one non-zero eigenvalue $\lambda_1=1$.
\\
\item
Suppose $n$ is even, and
 \begin{align}\label{eq:bipartite}
 \begin{split}
     Q_n(i,j):=&\frac{2}{n}\text{ if }1\le i\le \frac{n}{2}\text{ and }\frac{n}{2}+1\le j\le n,\\
     =&\frac{2}{n}\text{ if }1\le j\le \frac{n}{2}\text{ and }\frac{n}{2}+1\le i\le n,\\
     =&0\text{ otherwise}.
     \end{split}
     \end{align}
Thus $Q_n$ is the scaled adjacency matrix of the complete bipartite graph $G_n=K_{n/2,n/2}$, which is a regular graph of degree $d_i=\frac{n}{2}$. 
The limiting graphon for $f_{nQ_n}$ is the piecewise constant function $f$ given by
\begin{align*}
f(x,y)=&2\text{ if }0<x<1/2\text{ and }1/2<y<1,\\
=&2\text{ if }1/2<x<1\text{ and }0<y<1/2,\\
=&0\text{ otherwise}.
\end{align*}
This function has two non zero eigenvalues, $\lambda_1=1$ and $\lambda_2=-1$. In this case, we have $|\lambda_2|=|\lambda_1|$, and yet $\lambda_2<\lambda_1$ (so our results apply).
\\
\item
More generally, suppose $q\ge 2$ is a positive integer which divides $n$, and let
\begin{align*}
 \begin{split}
     Q_n(i,j):=&\frac{q}{n(q-1)}\text{ if }\Big\lceil \frac{i}{q}\Big\rceil\ne \Big\lceil \frac{j}{q}\Big\rceil \\
     =&0\text{ otherwise}.
     \end{split}
     \end{align*}
Thus $Q_n$ is the scaled adjacency matrix of the complete $q$ partite graph $G_n=K_{n/q,n/q,\cdots,n/q}$, which is a regular graph of degree $\frac{n(q-1)}{q}$. More precisely, the vertex set of $G_n$ can be partitioned into $q$ classes of equal size, such that all cross edges between classes are present. 
The limiting graphon for $f_{nQ_n}$ is the piecewise constant function $f$ given by
\begin{align*}
f(x,y)=&\frac{q}{q-1}\text{ if }\Big\lceil qx\Big\rceil \ne\Big\lceil qy\Big\rceil,\\
=&0\text{ otherwise}.
\end{align*}
This function has two non zero eigenvalues, $\lambda_1=1$ and $\lambda_2=-\frac{1}{q-1}$. Again in this case we have $\lambda_2<\lambda_1$, and so our results apply.
\\
\item
As in the example before, suppose $q\ge 3$ is a positive integer which divides $n$. But now, let
\begin{align*}\label{eq:bipartite}
 \begin{split}
     Q_n(i,j):=&\frac{q}{2n}\text{ if }\Big|\Big\lceil \frac{i}{q}\Big\rceil- \Big\lceil \frac{j}{q}\Big\rceil \Big|=1 ({\rm mod }\text{ }q),\\
     =&0\text{ otherwise}.
     \end{split}
     \end{align*}
     In words $Q_n$ is the scaled adjacency matrix of a graph $G_n$ whose vertex set can be partitioned into $q$ classes of equal size, such that all edges between consecutive classes are present, i.e.~class $2$ vertices are all connected to class $1$ vertices and class $3$ vertices, and class $q$ vertices are all connected to class $q-1$ vertices and class $1$ vertices.  Thus $G_n$ is a regular graph of degree $\frac{2n}{q}$. 
The limiting graphon for $f_{nQ_n}$ is the piecewise constant function $f$ given by
\begin{align*}
f(x,y)=&\frac{q}{2}\text{ if }\Big|\Big\lceil qx\Big\rceil -\Big\lceil qy\Big\rceil\Big|=1({\rm mod}\text{  }q),\\
=&0\text{ otherwise}.
\end{align*}
This function is has $q$ non zero eigenvalues given by $\lambda_a=\cos\Big(\frac{2a\pi}{q}\Big)$ for $0\le a\le q-1$ (using eigenvalue formulas for circulant matrices, see \cite[Chapter 3]{gray2006toeplitz}). Again in this case we have $\lambda_a<\lambda_1$ for all $a\in \{2,3,\ldots,q\}$, and so our results apply. Note that if $q$ is even, then the matrix $Q_n$ is bipartite, and the smallest eigenvalue of $f$ is $-1$. 
\\
\item
Suppose $\{d_n\}_{n\ge 1}$ is a sequence of positive integers, such that $\frac{d_n}{n}\to \eta\in (0,1)$. Let $G_n$ be a uniformly random $d_n$ regular graph, and let
\begin{align*}
Q_n(i,j)=&\frac{1}{d_n}\text{ if }(i,j) \text{ is an edge in }G_n,\\
=&0\text{ otherwise }.
\end{align*}
In this case the graph $G_n$ is $d_n$ regular by construction. It follows from \cite[Theorem 1.1]{chatterjee2011random} that $G_n$ converges in cut metric to the constant function $\eta$, and so $nQ_n$ converges in cut metric to the constant function $f\equiv 1$. Thus the behavior here is the same as that of the Curie-Weiss model (introduced in \eqref{eq:cw}), as $\lambda_1=1$ is the only non zero eigenvalue here.
\end{itemize} 
\begin{remark}
Even though we consider exactly regular graphs/matrices in this paper for the sake of clarity of arguments, we expect that our results will extend to the setting of approximately regular graphs/matrices, thereby allowing us to study Erdos-R\'enyi random graphs, stochastic block models, and Wigner matrices with non-negative entries, as done in \cite{deb2020fluctuations}.
\end{remark}
}

\subsection{Limits of Experiments}\label{sec:limits}

In this section we briefly introduce the notion of convergence of experiments. For more details we refer the reader to \cite[Chapter 9]{van2000asymptotic}.
\\

Suppose that for each $1\le n\le \infty$ we have a measure space $(\mathcal{X}_n,\mathcal{F}_n)$. Let $\{\P_{h,n},h\in H\}$ be a collection of probability measures on $(\mathcal{X}_n,\mathcal{F}_n)$. We say that $\{\P_{h,n},h\in H\}$ converges to $\{P_{h,\infty},h\in H\}$ in the sense of limits of experiments, if for every finite subset $I$ of $H$ and $h_0\in H$ we have
\begin{equation}\label{LE}
    \Big{(}\frac{d\P_{h,n}}{d\P_{h_{0},n}}(\mathbf{X})\Big{)}_{h\in I}\stackrel{d,\P_{h_{0},n}}{\longrightarrow}\Big{(}\frac{d\P_{h,\infty}}{d\P_{h_{0},\infty}}(Y)\Big{)}_{h\in I}.
\end{equation}
The RHS of \eqref{LE} is also called the likelihood ratio process with base $h_{0}$. We will use the short hand notation
$$\P_{h,n}\stackrel{\rm Exp}{\to}\P_{h,\infty}$$
to denote convergence of experiments. 

In particular, if $H\subseteq \R$ is open, and $\P_{h,\infty}=N(h,\tau^2)$ for some $\tau>0$, then we say the collection of experiments $\{\P_{h,n},h\in H\}$ is locally asymptotically normal, or LAN. 
For examples of both LAN and non LAN experiments, see \cite[Chapters 7,9]{van2000asymptotic}).

\subsection{Estimation of $\theta$}\label{sec:est}

In this paper we will focus on the behavior of two estimators for $\theta$, the maximum likelihood estimator, and the maximum pseudo-likelihood estimator.

\begin{itemize}
   \item{\bf Maximum Likelihood Estimator (MLE)}
   
The maximum likelihood estimator $\hat{\theta}_n^{MLE}$ is defined as 
$$\arg\sup_{\theta\in \R}\Big\{\frac{\theta}{2} \mathbf{X}^{T}Q_{n}\mathbf{X}- Z_n(\theta,Q_n)\Big\},$$
provided the supremum is attained uniquely.
Since the function in the above display is strictly concave in $\theta$, it follows that $\hat{\theta}_n^{MLE}$, if it exists, is the unique solution to the equation 
\begin{equation*}
   \frac{1}{2}\mathbf{X}^{T}Q_{n}\mathbf{X}=Z_n'(\theta,Q_n) =\frac{\partial Z_n(\theta,Q_n)}{\partial \theta}\Big|_{\theta=\hat{\theta}_n^{MLE}}=\frac{1}{2}\E_{\P_{\theta,Q_{n}}}\mathbf{X}^{T}Q_{n}\mathbf{X}
\end{equation*}
in $\theta\in \R$. In the special case of the Curie-Weiss model, the asymptotics of $\hat{\theta}_n^{MLE}$ was studied in  \cite{comets1991asymptotics}, where the authors demonstrated interesting phase transition properties in the limit distribution across different regimes of $\theta$. However, to the best of our knowledge, the behavior of $\hat{\theta}_n^{MLE}$ is not understood for almost any other graph sequence.

\item{\bf Maximum Pseudo-likelihood Estimator (MPLE)}

Although the MLE is a natural estimator, from a computational perspective it is often difficult to evaluate, as the normalizing constant $Z_n(\theta,Q_n)$ is computationally intractable. To bypass this, Besag introduced the maximum pseudo-likelihood estimator (\cite{besag1974spatial, besag1975statistical}) for spatial interaction models. Below we define the maximum pseudo-likelihood estimator $\hat{\theta}_n^{MPLE}$.
\\

Given  $\mathbf{X}\sim \P_{\theta,Q_{n}}$, we have
\begin{equation}
\P_{\theta,Q_{n}}(X_{i}=x_i|X_{j}\text{ for all } j\neq i):= \frac{\exp{(\theta t_ix_i)}}{\exp{(\theta t_i)}+\exp{(-\theta t_i)}},
\end{equation}
where $t_i=\sum\limits_{j=1}^nQ_n(i,j)X_{j}$. 
Define the pseudo-likelihood as the product of the above one dimensional conditional distributions:
\begin{equation*}
PL_n(\theta): = \prod\limits_{i=1}^{n}\P_{\theta, Q_n}(X_{i}|X_{j} \text{ for all } j\neq i)=\frac{\exp(\theta\sum_{i=1}^nX_it_i)}{2^n\prod_{i=1}^n \cosh(\theta t_i)}.
\end{equation*}
The maximum pseudo-likelihood estimator $\hat{\theta}_n^{MPLE}$ is defined as 
$$\arg\sup_{\theta\in \R} \log PL_n(\theta)=\arg\sup_{\theta\in \R}\{\theta \sum_{i=1}^nX_it_i-\sum_{i=1}^n\log\cosh(\theta t_i)\},$$
provided the supremum is attained uniquely. Since the function in the above display is strictly concave in $\theta$, the MPLE, if it exists, satisfies the equation
\begin{equation}\label{PL_Optimizing}
    \mathbf{X}^{T} Q_{n} \mathbf{X} = \sum\limits_{i=1}^{n}t_i\tanh{({\theta}t_i)}.
\end{equation}
The above equation \eqref{PL_Optimizing} does not involve the intractable function $Z_n(\theta,Q_n)$, and is much easier to compute. Thus computational complexity of the pseudo-likelihood estimator is much less as compared to the maximum likelihood estimator.
The consistency of the pseudo-likelihood estimator for Ising models was established in \cite{bhattacharya2018inference,chatterjee2007estimation,ghosal2020joint}, but the question of asymptotic distribution remained open.

\end{itemize}

Proposition \ref{prop_exist} gives an exact characterization for the existence of the MLE and the MPLE, and shows that the conditions hold with probability tending to $1$. 

\subsection{Hypothesis Testing for $\theta$}\label{sec:ht}

Given ${\mathbf X}\sim \P_{\theta,Q_n}$ for some $\theta>0$, suppose we want to test 
\begin{align}\label{Hypothesis_testing}
\mathcal{H}_{0}:\theta = \theta_{0}\text{\ \ \ vs\ \ \ }\mathcal{H}_{1}:\theta > \theta_n,
\end{align}
for a positive real number sequence $\{\theta_{n}\}_{n=1}^{\infty}$, at level $\alpha\in (0,1)$. Here $\theta_n$ will be chosen (depending on $\theta_0$) in such a manner, that we are in the contiguous regime, i.e.~the limiting power of the most powerful test will be between $(0,1)$. We consider three natural tests for the above problem in this paper, which are introduced below:

\begin{itemize}
\item{\bf Mean Square Test (MS-test)}

Let $\Bar{\mathbf X}$ denote the sample mean of ${\mathbf X}$, and let
\begin{align}\label{MS_test}
\psi_{n}(\mathbf{X})=\left\{
\begin{array}{rcl}
1&     &  \text{If\ \ } n\Bar{\mathbf{X}}^{2}>K_{n}(\alpha)\\
0&     &  \text{Otherwise}
\end{array} \right.
\end{align}
where $K_{n}(\alpha)$ is chosen such that $\psi_n$ has level $\alpha$. Let $\beta_{MS}$ denote the limiting power of the above test (provided the limit exists).
 
\item{\bf Neyman Pearson Test (NP-test)}
By Neyman Pearson Lemma, the UMP test for the above hypothesis testing problem is based on the sufficient statistics $\mathbf{X}^{T}Q_{n}\mathbf{X}$, and is given by
\begin{align}\label{NP_test}
\psi_{n}(\mathbf{X})=\left\{
\begin{array}{rcl}
1&     &  \text{If\ \ } \mathbf{X}^{T}Q_{n}\mathbf{X}>K_{n}(\alpha)\\
0&     &  \text{Otherwise}
\end{array} \right.
\end{align}
where $K_{n}(\alpha)$ is chosen such that the above test has level $\alpha$. It follows from standard exponential family calculations that the above test is equivalent to rejecting for large values of $\hat{\theta}_n^{MLE}$. Let $\beta_{NP}$ denote the limiting power of the above test (provided the limit exists).
 
\item{\bf Pseudo-likelihood Test (PL-test)}
With $\hat{\theta}^{MPLE}_{n}$ denoting the pseudo-likelihood estimator, define the pseudo-likelihood test by setting
\begin{align}\label{PL_test}
\psi_{n}(\mathbf{X})=\left\{
\begin{array}{rcl}
1&     &  \text{If\ \ } \hat{\theta}^{MPLE}_{n}>K_{n}(\alpha)\\
0&     &  \text{Otherwise}
\end{array} \right.
\end{align}
where $K_{n}(\alpha)$ is chosen such that the above test has level $\alpha$. Let $\beta_{PL}$ denote the limiting power of the above test (provided the limit exists).

\begin{remark}
Even though we consider a one sided testing problem in this paper, the same analysis applies to the two sided versions of the above testing problem, with obvious modifications.
\end{remark}

\end{itemize}

\subsection{Statement of main results}\label{Results}

Before we state our main results, we require some technical definitions, most of which are motivated by the following proposition from \cite[Lemma 1.1]{deb2020fluctuations}:
\begin{ppn}\label{roots_domains}
For any $\theta>0$ consider the fixed point equation
\begin{equation}\label{eq_m}
    w(\theta,x)=0, \text{\ \ \ where\ \ \ } w(\theta,x):=x-\tanh(\theta x).
\end{equation}
\begin{enumerate}
    \item[(a)] If $\theta>1$, then \eqref{eq_m} has two non-zero roots $\pm m(\theta)$ in $x$, where $m(\theta)>0$ and $\frac{\partial w(\theta,x)}{\partial x}\Big|_{x=m(\theta)}>0$. 
    \item[(b)] If $\theta=1$, then \eqref{eq_m} has a unique root $m(1)=0$, and $\frac{\partial w(\theta,x)}{\partial x}\Big|_{x=0}=0$. 
    \item[(c)] If $\theta\in (0,1)$, then \eqref{eq_m} has a unique root $m(\theta)=0$, and $\frac{\partial w(\theta,x)}{\partial x}\Big|_{x=0}>0$.  
\end{enumerate}
\end{ppn}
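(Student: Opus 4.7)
The plan is to exploit two structural features of $w(\theta,x)=x-\tanh(\theta x)$: it is odd in $x$, and its partial derivatives in $x$ have very clean signs. Since $w$ is odd in $x$, the root set is symmetric about $0$, so it suffices to study non-negative roots (and then reflect to get the negative ones in part (a)). Throughout I will work with $w_x(\theta,x)=1-\theta\,\mathrm{sech}^2(\theta x)$ and $w_{xx}(\theta,x)=2\theta^2\mathrm{sech}^2(\theta x)\tanh(\theta x)$; the crucial observation is that $w_{xx}(\theta,x)>0$ for all $x>0$, so $w_x(\theta,\cdot)$ is strictly increasing on $[0,\infty)$.

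For parts (b) and (c), I would note that $w_x(\theta,0)=1-\theta$, so for $\theta\in(0,1]$ we have $w_x(\theta,0)\ge 0$. By the monotonicity of $w_x$ in $x$, this forces $w_x(\theta,x)>0$ for every $x>0$, so $w(\theta,\cdot)$ is strictly increasing on $[0,\infty)$ and the unique non-negative root is $0$. This simultaneously gives uniqueness and $m(\theta)=0$ in both regimes, and since $w_x(\theta,0)=1-\theta$ it is positive in case (c) and vanishes in case (b), as claimed.

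For part (a), where $\theta>1$, I would combine $w(\theta,0)=0$, $w_x(\theta,0)=1-\theta<0$, and $w_x(\theta,x)\to 1$ as $x\to\infty$. Strict monotonicity of $w_x$ in $x$ then yields a unique $x^\star(\theta)>0$ with $w_x(\theta,x^\star)=0$, so $w(\theta,\cdot)$ strictly decreases on $[0,x^\star]$ and strictly increases on $[x^\star,\infty)$; in particular $w(\theta,x^\star)<0$. Because $w(\theta,x)=x-\tanh(\theta x)\to\infty$ as $x\to\infty$, the intermediate value theorem on $(x^\star,\infty)$ produces exactly one positive root $m(\theta)>x^\star$. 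By construction $m(\theta)$ lies strictly to the right of the minimizer, so $w_x(\theta,m(\theta))>0$, establishing the sign assertion. Finally, oddness of $w$ in $x$ gives the companion negative root $-m(\theta)$ and rules out any others.

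I do not expect a genuine obstacle here: the proposition is essentially a textbook calculus exercise once the correct monotonicity of $w_x$ is identified. The one place to be careful is distinguishing the two critical points of $w(\theta,\cdot)$ in the low-temperature case (namely $0$ and $x^\star(\theta)$) from its two roots ($0$ and $m(\theta)$), to make sure the derivative sign claim at the non-zero root comes out as stated rather than accidentally negative.
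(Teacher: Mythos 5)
Your proof is correct and complete. One thing worth flagging: the paper does not actually supply a proof of this proposition---it is cited verbatim from \cite[Lemma~1.1]{deb2020fluctuations}---so there is no internal argument to compare against. Your argument is exactly the standard one: compute $w_x(\theta,x)=1-\theta\,\mathrm{sech}^2(\theta x)$ and $w_{xx}(\theta,x)=2\theta^2\mathrm{sech}^2(\theta x)\tanh(\theta x)>0$ for $x>0$, conclude that $w_x(\theta,\cdot)$ is strictly increasing on $[0,\infty)$, and then split on the sign of $w_x(\theta,0)=1-\theta$. For $\theta\le 1$ this makes $w(\theta,\cdot)$ strictly increasing on $[0,\infty)$ (so $0$ is the only non-negative root), and for $\theta>1$ it produces a unique interior minimizer $x^\star(\theta)$ with $w(\theta,x^\star)<0$, after which monotonicity of $w$ on $(x^\star,\infty)$ together with $w(\theta,x)\to\infty$ gives a single positive root $m(\theta)>x^\star$, automatically satisfying $w_x(\theta,m(\theta))>0$; oddness then supplies the mirror root. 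Your caution about keeping the critical point $x^\star(\theta)$ distinct from the root $m(\theta)$ is exactly the right thing to watch, and you handle it correctly. No gaps.
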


\begin{defn}\label{Three_domains}
Using Proposition \ref{roots_domains}, we define the function $\theta\mapsto m(\theta)$, where $m(\theta)$ is a non-negative root of the equation $w(\theta,x)=0$ \enquote{chosen carefully} as in Proposition \ref{roots_domains}. Note that $m(\theta)=0$ if $\theta\le 1$, and $m(\theta)>0$ if $\theta>1$.

We also use the above proposition to partition the parameter space $[1,\infty)$ into two distinct domains:
\begin{itemize}
    \item Low Temperature Regime: $\Theta_{1}:=(1,\infty)$;
    \item Critical point: 
    $\Theta_{2}=1$;
   \item High Temperature Regime: $\Theta_{3}:=(0,1)$.
\end{itemize}
The nomenclature of these domains is inspired from statistical physics terminology (see for e.g.~\cite{ellis1978statistics}). \textcolor{black}{It follows from \cite[Theorem 3.3 part (a)]{bhattacharya2018inference} that consistent estimation/testing for $\theta$ is not impossible in the model $\P_{\theta,Q_n}$ for $\theta\in (0,1)$. This is also indicated by the fact that for $\theta\in (0,1)$ the function $m(.)$ is constant. Thus the asymptotics in this regime is of less statistical interest, and we focus only on the regime $\theta\ge 1$ for the rest of this paper. For the interested reader, we refer to version 1 of this article which also treats the regime $\theta\in (0,1)$ (\cite{xu2022ising}).}

For any $\theta\in \Theta_1 $, define the positive real $\sigma^2(\theta)$  by setting 
\begin{align}\label{eq:sigma}
\sigma^2(\theta):=\frac{1-m^2(\theta)}{1-\theta(1-m^2(\theta))}.
\end{align}
Note that $\sigma^2(\theta)$ is well defined by Proposition \ref{roots_domains}, as $1-(1-m^2(\theta))\theta=\frac{\partial w(\theta,x)}{\partial x}\Big|_{x=m(\theta)}>0$. 
\end{defn}

\begin{defn}\label{def:thetan}
Given $h\in \R$ and $\theta_0\ge 1$, define a positive sequence $\{\theta_n\}_{n\ge 1}$ (depending on $h,\theta_0$) by setting $\theta_n:=\theta_0+\frac{h}{\sqrt{n}}$.
We will omit the dependence of $\theta_0,h$, since it will be clear from the context.
\end{defn}

\begin{defn}
Given any continuous real valued random variable $\zeta$, let $\Psi_\zeta(.):(0,1)\mapsto \R$ denote the quantile function, i.e.~the inverse cdf of $\zeta$, defined by
$$\Psi_\zeta(p):=\inf\{t\in \R:F_\zeta(t)\ge p\},$$
where $F_\zeta$ is the cdf of $\zeta$. In particular if $\zeta\sim N(0,1)$, then we will also use the notation $z_\alpha$ for $\Psi_\zeta(1-\alpha)$, as is standard in statistics literature.
\end{defn}

\begin{defn}\label{def:kappa}
Let $\kappa:=\gamma^2-\|f\|_2^2$, where $\gamma,f$ are as in \eqref{eq:frob} and \eqref{eq:cut} respectively.
\end{defn}

We now state the results for each of the domains separately.

\subsubsection{The low temperature regime $\Theta_1$}

Our first theorem describes the limits of experiments, asymptotic performance of estimators, and asymptotic performance of tests in low temperature regime $\Theta_{1}$. 
\begin{thm}\label{test_low}
Suppose ${\mathbf X}\sim \P_{\theta,Q_n}$
with $Q_{n}$ satisfying \eqref{RIC}, \eqref{CWC}, \eqref{eq:frob} and \eqref{eq:cut}  for some $C_W, \kappa\in (0,\infty)$ and $f\in \mathcal{W}$.
Then with $R(\theta_0):=m^2(\theta_0)\sigma^2(\theta_0)$, the following conclusions hold:
\begin{enumerate}
\item[(a)]\label{low_a} We have 
\begin{equation*}
        \{\P_{\theta_{0}+h n^{-1/2},Q_{n}}\}_{h\in \R}\stackrel{\rm Exp}{\longrightarrow}\{N\Big(h, R(\theta_0)^{-1}\Big)\}_{h\in \R}.
    \end{equation*}

\item[(b)]\label{low_est} The MLE $\hat{\theta}_{n}^{MLE}$ and the MPLE $\hat{\theta}_{n}^{MPLE}$ (as defined in section \ref{sec:est}) exist with probability tending to $1$, and have a common asymptotic distribution, given by
\begin{equation*}
\sqrt{n}(\Tilde{\theta}_{n}^{MLE}-\theta_{0})\stackrel{d}{\to}N\Big(0,R(\theta_0)^{-1}\Big),\quad
\sqrt{n}(\hat{\theta}^{MPLE}_{n}-\theta_{0})\stackrel{d}{\to}N\Big(0,R(\theta_0)^{-1}\Big).
\end{equation*}

\item[(c)]\label{low_b} 
With  $\theta_n=\theta_0+\frac{h}{\sqrt{n}}$ (as in definition \ref{def:thetan}) for some $h>0$, and $\beta_{MS}$, $\beta_{NP}$ and  $\beta_{PL}$ as defined in section \ref{sec:ht},  we have
\begin{equation*}
    \beta_{NP} = \beta_{PL} = \beta_{MS}= \P(N(0,1)>z_\alpha-h\sqrt{R(\theta_0)}),
\end{equation*}
where 
$z_\alpha$ represents the $(1-\alpha)^{th}$ quantile for $N(0, 1)$. \end{enumerate}
\end{thm}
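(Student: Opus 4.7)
The proof rests on two inputs for the Ising model under $\P_{\theta_0,Q_n}$ with $\theta_0>1$, both established in the companion work \cite{deb2020fluctuations}: \textbf{(i)} a conditional CLT: if $s\in\{-1,+1\}$ denotes the data-determined sign of $\bar{\mathbf X}$, then conditionally on $s$,
$$\sqrt n\bigl(\bar{\mathbf X}-s\,m(\theta_0)\bigr)\stackrel{d}{\to}N\bigl(0,\sigma^2(\theta_0)\bigr),$$
and \textbf{(ii)} a rank-one reduction $\mathbf X^T Q_n\mathbf X = n\bar{\mathbf X}^2 + O_p(1)$, which uses $Q_n\mathbf 1=\mathbf 1$, the uniform bound \eqref{CWC}, and the spectral-gap part of \eqref{eq:cut} to suppress the non-top eigencomponents of $\mathbf X$. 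Because $sZ\stackrel{d}{=}Z$ for $Z\sim N(0,\sigma^2(\theta_0))$ by symmetry, these inputs upgrade unconditionally to $\sqrt n(\bar{\mathbf X}^2-m^2(\theta_0))\stackrel{d}{\to}N(0,4R(\theta_0))$ and $\Var_{\theta_0}(\mathbf X^TQ_n\mathbf X)/n\to 4R(\theta_0)$.

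For part (a), I would write the log-likelihood ratio with base $\theta_0$ as
$$L_n(h)=\log\frac{d\P_{\theta_n,Q_n}}{d\P_{\theta_0,Q_n}}(\mathbf X)=\frac{h}{2\sqrt n}\mathbf X^TQ_n\mathbf X-\bigl(Z_n(\theta_n,Q_n)-Z_n(\theta_0,Q_n)\bigr),$$
Taylor-expand $Z_n$ to second order about $\theta_0$ using $Z_n'(\theta_0)=\tfrac12\E_{\theta_0}\mathbf X^TQ_n\mathbf X$ and $Z_n''(\theta_0)=\tfrac14\Var_{\theta_0}(\mathbf X^TQ_n\mathbf X)$, and control the third-order remainder by a uniform bound on $|Z_n'''|$ over an $o(1)$-neighborhood of $\theta_0$ coming from Gaussian-type concentration of the bounded quadratic form $\mathbf X^TQ_n\mathbf X$. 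This yields
$$L_n(h)=\frac{h}{2\sqrt n}\bigl(\mathbf X^TQ_n\mathbf X-\E_{\theta_0}\mathbf X^TQ_n\mathbf X\bigr)-\frac{h^2}{2}R(\theta_0)+o_p(1),$$
and the inputs above give $L_n(h)\stackrel{d}{\to}N(-\tfrac{h^2}{2}R(\theta_0),h^2R(\theta_0))$. Finite-dimensional joint convergence in $h$ is automatic since $L_n(h)$ is, up to $o_p(1)$, a deterministic multiple of the single statistic $\sqrt n(\bar{\mathbf X}^2-m^2(\theta_0))$. Matching this limit with the log-likelihood ratio of $N(h,R(\theta_0)^{-1})$ against $N(0,R(\theta_0)^{-1})$ identifies the limit experiment.

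For part (b), existence of the estimators with probability tending to $1$ follows from Proposition \ref{prop_exist}. For the MLE, linearizing the score equation $\tfrac12\mathbf X^TQ_n\mathbf X=Z_n'(\hat\theta^{MLE}_n)$ around $\theta_0$ and dividing by $Z_n''(\theta_0)/n\to R(\theta_0)$ gives $\sqrt n(\hat\theta^{MLE}_n-\theta_0)\to N(0,R(\theta_0)^{-1})$. For the MPLE, I would expand the score $U(\theta)=\sum_i t_i(X_i-\tanh(\theta t_i))$ about $\theta_0$. Writing $t_i=s\,m(\theta_0)+u_i$ with $u_i=(Q_n\delta)_i$ and $\delta_i=X_i-s\,m(\theta_0)$, and Taylor-expanding $\tanh(\theta_0 t_i)$ around $\theta_0 s\,m(\theta_0)$, the identity $\mathbf 1^TQ_n=\mathbf 1^T$ together with the algebraic cancellation $1-\theta_0(1-m^2(\theta_0))=(1-m^2(\theta_0))/\sigma^2(\theta_0)$ delivers
$$U(\theta_0)=\frac{s\,m(\theta_0)(1-m^2(\theta_0))}{\sigma^2(\theta_0)}\,n\bar\delta+\mathcal R_n,\qquad U'(\theta_0)=-n\,m^2(\theta_0)(1-m^2(\theta_0))+o_p(n),$$
where $\bar\delta=\bar{\mathbf X}-s\,m(\theta_0)$ and $\mathcal R_n$ is a quadratic-in-$\delta$ form built from $Q_n$ and $Q_n^2$. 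Combining with the CLT $\sqrt n\,s\bar\delta\to N(0,\sigma^2(\theta_0))$ via $\hat\theta^{MPLE}_n-\theta_0=-U(\theta_0)/U'(\theta_0)+o_p(n^{-1/2})$ yields $\sqrt n(\hat\theta^{MPLE}_n-\theta_0)\to N(0,R(\theta_0)^{-1})$, \emph{provided} $\mathcal R_n=o_p(\sqrt n)$.

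For part (c), Le Cam's third lemma transfers limits under $\P_{\theta_0}$ to $\P_{\theta_n}$. The statistic $\sqrt n(\bar{\mathbf X}^2-m^2(\theta_0))$ is, up to $o_p(1)$, a scalar multiple of the LAN score from part (a), so under $\P_{\theta_n}$ it converges to $N(2hR(\theta_0),4R(\theta_0))$; after standardizing the rejection region to asymptotic level $\alpha$, the MS power equals $\P(N(0,1)>z_\alpha-h\sqrt{R(\theta_0)})$. For the NP and PL tests, part (b) shows $\sqrt n(\hat\theta^{MLE}_n-\theta_0)$ and $\sqrt n(\hat\theta^{MPLE}_n-\theta_0)$ are each asymptotically linear in the same score, hence converge under $\P_{\theta_n}$ to $N(h,R(\theta_0)^{-1})$, giving the same limit. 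The main obstacle throughout is controlling the quadratic remainder $\mathcal R_n$ in the MPLE linearization at scale $\sqrt n$: a crude spectral bound only yields $\mathcal R_n=O_p(n)$, and the required $o_p(\sqrt n)$ control relies on a careful second-moment argument that exploits the pseudo-likelihood martingale identity $\E[X_i-\tanh(\theta_0 t_i)\mid\mathbf X_{-i}]=0$ together with the spectral-gap assumption \eqref{eq:cut}, which forces the spin fluctuations orthogonal to $\mathbf 1/\sqrt n$ to decouple enough for the quadratic form to concentrate around its mean at the needed scale.
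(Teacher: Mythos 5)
Your proposal follows the same high-level architecture as the paper (reduce to $\bar{\mathbf X}$ via the rank-one decomposition $\mathbf X^T Q_n\mathbf X=n\bar{\mathbf X}^2+O_p(1)$, LAN via a scalar score, estimator linearizations, Le Cam's third lemma for power), but it diverges on the two load-bearing steps, and in both places the divergence opens a genuine gap.

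\textbf{The normalizing constant.} For part (a) you propose to Taylor-expand $Z_n$ to second order around $\theta_0$ and control the cubic remainder by a uniform bound on $Z_n'''$ ``coming from Gaussian-type concentration of $\mathbf X^TQ_n\mathbf X$.'' This is where your route is thin. You need two facts: (i) $Z_n''(\theta_0)/n\to R(\theta_0)$, and (ii) $Z_n'''=o(n^{3/2})$ uniformly on an $O(n^{-1/2})$-neighborhood, since the cubic remainder contributes $Z_n'''(\xi_n)h^3/(6n^{3/2})$ and an $O(n^{3/2})$ bound would only give $O(1)$, not $o(1)$. Fact (i) is the convergence of $\Var_{\theta_0}(\mathbf X^TQ_n\mathbf X)/n$, which does not follow from the distributional CLT alone --- you need uniform integrability of the squared fluctuation. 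Fact (ii) concerns the \emph{third cumulant} of $\tfrac12\mathbf X^TQ_n\mathbf X$; concentration bounds control tails, not cumulants, and Gaussian-tail concentration at scale $\sqrt n$ is compatible with a third cumulant of exact order $n^{3/2}$, which is not small enough. The paper sidesteps both issues entirely: it proves (Lemma \ref{Curie_weiss_LE}(a)) that $Z_n(\theta_n,Q_n)-Z_n(\theta_n,{\rm CW})$ converges to an $h$-free constant, so all $h$-dependence of $Z_n$ is captured by the Curie-Weiss constant, for which the auxiliary variable $\phi_n$ of Proposition \ref{ppn_aux} gives an explicit one-dimensional Laplace integral. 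Uniform integrability and differentiation-under-the-limit are then handled cleanly via the moment convergence of $\phi_n$ (Lemma \ref{lem:phi1}) and the monotone-convergence trick (Proposition \ref{ppn:monotone}). Your generic Taylor argument would require re-deriving estimates of comparable strength without this structural help.

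\textbf{The MPLE remainder.} You correctly flag that the quadratic remainder $\mathcal R_n$ in the pseudo-likelihood linearization is the main technical obstacle and that a naive spectral bound yields only $O_p(n)$. But your proposed fix --- a second-moment argument via the martingale identity $\E[X_i-\tanh(\theta_0 t_i)\mid \mathbf X_{-i}]=0$ --- is not what the paper does, and you do not show it closes the gap. The paper instead invokes the concentration result $\sum_{i=1}^n(t_i-m(\theta_0))^2=O_p(1)$ (conditional on $\bar{\mathbf X}>0$) from \cite[Lemma 2.1(a)]{deb2020fluctuations}, transferred to $\P_{\theta_n,Q_n}$ by contiguity; with that bound, every quadratic-in-$(t_i-m(\theta_0))$ term is already $O_p(1)=o_p(\sqrt n)$ and no further moment analysis is needed. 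The martingale identity by itself gives $\E[U(\theta_0)]=0$ but does not obviously control $\sum_i(t_i-m)^2$ at the $O_p(1)$ scale; the spectral gap alone gives $\|Q_n\mathbf X-\bar{\mathbf X}\mathbf 1\|_2^2\lesssim \lambda_2^2\|\mathbf X-\bar{\mathbf X}\mathbf 1\|_2^2=O(n)$, which is exactly the crude bound you acknowledge is insufficient. So while you have located the right difficulty, the proposal does not actually resolve it.

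In short: your decomposition and the shape of the argument match the paper, but the paper's proof rests on two specific inputs --- the Curie-Weiss tilting of the partition function and the $L^2$-concentration of the local fields --- and your substitutes for each are not established and would require significant further work.
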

\begin{remark}
The above theorem shows that for $\theta_0>1$, the family of Ising models is LAN at scale $n^{-1/2}$, which is what happens in classical statistics for iid models. It then follows by extension of classical arguments that $\hat{\theta}_n^{MLE}$ is asymptotically optimal. Perhaps surprisingly, part (b) above shows that $\hat{\theta}_n^{MPLE}$ (which requires significantly less computational resources) is also asymptotically optimal. Carrying this through, it is shown in part (c) that the tests based on $\hat{\theta}_n^{MLE}$ and $\hat{\theta}_n^{MPLE}$ have the same asymptotic power. In fact, the much simpler test based on the sample mean $\bar{X}$ also has the same asymptotic power, computation of which does not even require the knowledge of the matrix $Q_n$. Thus in this regime it is possible to gain optimal asymptotic performance for tests of hypothesis without the knowledge of $Q_n$. 
\end{remark}

\subsubsection{The critical regime $\Theta_2$}
As demonstrated in our next result, the behavior is very different when $\theta_0=1$ (which is the critical point). To describe the limit experiment (which is no longer LAN), and the limiting behavior of estimators/tests we make the following definitions.
\begin{defn}\label{def:H_h}
Let $\{\mathbb{H}_{h}(\cdot),h\in \R\}$ be a family of probability distributions on $\R$ parametrized by $h$, with density function 
\begin{equation}\label{H_h}
    p_{h}(u)= \exp{(-\frac{1}{12}u^{4}+\frac{1}{2}hu^{2}-F(h))}.
\end{equation} 
Here $$F(h):=\log \int_{\R} \exp{(-\frac{1}{12}u^{4}+\frac{1}{2}hu^{2})}du$$ is the log normalizing constant, which makes $p_h(.)$ into a density. Let $U_{h}\sim \H_h$.
\\

\textcolor{black}{Let $W^*\sim N(0,2\kappa)$ and $\{Y_j\}_{j\ge 2}\stackrel{iid}{\sim}\chi_1^2$ be  mutually independent. For $\theta\ge 0$, define two random variables $S_\theta$ and $T_\theta$ by setting
\begin{align}\label{eq:ss}
S_\theta:=&(1-m^2(\theta))\Big[\sum\limits_{j=2}^{\infty}\lambda_{j}\Big(\frac{Y_{j}}{1-\theta(1-m^2(\theta))\lambda_{j}}-1\Big)-1+(1-m^2(\theta))\theta\kappa+W^*\Big],\\
\label{eq:tt}
T_\theta:=&(1-m^2(\theta))\Big[\sum\limits_{j=2}^{\infty}\frac{\lambda_{j}^{2}Y_{j}}{1-\theta(1-m^2(\theta))\lambda_{j}}+\kappa\Big],
\end{align}
where $m(\theta)$ is as in definition \ref{Three_domains}. 
Here the infinite sums above converge in $L_2$ (see Lemma \ref{lem:comparison} in the supplementary file). 
 Set
\begin{equation}\label{eq:V_h}
       V_{h}=\frac{1}{3}U_{h}^{2}+\frac{S_1-T_1}{U_{h}^{2}},
   \end{equation}
   where $U_h$ is independent of $(S_1,T_1)$, and $(S_1,T_1)$ is as defined in \eqref{eq:ss} and \eqref{eq:tt}).}
\end{defn}

\begin{thm}\label{test_critical}
Suppose ${\mathbf X}\sim \P_{\theta,Q_n}$
with $Q_{n}$ satisfying \eqref{RIC}, \eqref{CWC}, \eqref{eq:frob} and \eqref{eq:cut}  for some $C_W, \kappa\in (0,\infty)$ and $f\in \mathcal{W}$.
Then the following conclusions hold:
\begin{enumerate}
    \item[(a)]\label{critical_a}We have
    \begin{equation}\label{critical_limit}
    \{\P_{\theta_{0}+hn^{-1/2},Q_{n}}\}_{h\in \R}\stackrel{\rm Exp}{\longrightarrow}\{\mathbb{H}_{h}\}_{h\in \R}.
    \end{equation}
    
    \item[(b)]\label{critical_est} The MLE $\hat{\theta}_n^{MLE}$ and MPLE
$\hat{\theta}_{n}^{MPLE}$ as defined in section \ref{sec:est} exist with probability tending to $1$, and satisfy
\begin{align}\label{MLE_critical}
\P_{1,Q_{n}}\big(\sqrt{n}(\hat{\theta}_{n}^{MLE}-1)\leq h\big)\longrightarrow &\P\big(U_{0}^{2}\leq\E U_{h}^{2} \big),\\
\label{MPLE_critical}
\sqrt{n}(\hat{\theta}^{MPLE}_{n}-1)\stackrel{d,\P_{1,Q_{n}}}{\longrightarrow}&V_{0},
\end{align}
    
\item[(c)]\label{critical_b} 
With  $\theta_n=1+\frac{h}{\sqrt{n}}$ for some $h>0$ (as in definition \ref{def:thetan}), and $\beta_{MS}$, $\beta_{NP}$ and  $\beta_{PL}$ as defined in section \ref{sec:ht},  we have
    \begin{align}
    \beta_{NP} = \beta_{MS}=&2\P\big(U_{h}>\Psi_{\mathbb{H}_0}(1-\alpha/2)\big),\\
    \beta_{PL}=&\P\big(V_{h}>\Psi_{V_{0}}(1-\alpha)\big).
    \end{align} 
\end{enumerate}
\end{thm}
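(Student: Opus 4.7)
The plan is to prove all three parts by analyzing the log-likelihood ratio process and then invoking Le Cam's third lemma. Write
\begin{equation*}
L_n(h) := \log \frac{d\P_{1+h/\sqrt{n},Q_n}}{d\P_{1,Q_n}}(\mathbf{X}) = \frac{h}{2\sqrt{n}}\mathbf{X}^T Q_n \mathbf{X} - \bigl[Z_n(1+h/\sqrt{n},Q_n) - Z_n(1,Q_n)\bigr],
\end{equation*}
and spectrally decompose $\mathbf{X}^T Q_n \mathbf{X} = n\bar{\mathbf{X}}^2 + R_n$, where $R_n := \sum_{j\ge 2}\lambda_j^{(n)}(v_j^{(n)T}\mathbf{X})^2$, using the principal eigenpair $(1, \mathbf{1}/\sqrt{n})$ of $Q_n$ guaranteed by \eqref{RIC}. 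The cornerstone for part (a) is the critical fluctuation $n^{1/2}\bar{\mathbf{X}}^2 \stackrel{d}{\to} U_0^2$ under $\P_{1,Q_n}$, which I would establish via a Hubbard--Stratonovich representation of $Z_n$: after integrating out the spins, the quartic density $\exp(-u^4/12)$ emerges from Taylor-expanding $\log\cosh$ to fourth order, the usual quadratic term cancelling exactly because $\theta_0\lambda_1=1$, which forces the zoom scale $n^{-1/4}$. The remainder $R_n$ concentrates around its mean by the spectral-gap hypothesis \eqref{eq:cut}, and its contribution to $L_n(h)$ merges with the Taylor-linear part of $Z_n(\theta_n)-Z_n(1)$; what survives converges to $\tfrac{h}{2}U_0^2 - (F(h)-F(0))$, which is precisely $\log(p_h(U_0)/p_0(U_0))$, yielding \eqref{critical_limit}.

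For part (b), the MLE score equation $\tfrac12 \mathbf{X}^T Q_n \mathbf{X} = Z_n'(\hat{\theta}_n^{MLE})$ combined with the same decomposition and the critical expansion of $Z_n'$ reduces (after dividing by $\sqrt{n}$) to $U_0^2 + o_p(1) = g(U_n) + o_p(1)$, where $U_n := \sqrt{n}(\hat{\theta}_n^{MLE}-1)$ and $g(h) := \E U_h^2$; strict monotonicity of $g$ (a consequence of $\H_h$ being an exponential tilt of $\H_0$ in $u^2$) then inverts to the claimed cdf $\P(U_0^2 \le \E U_h^2)$. For the MPLE I would Taylor expand $\tanh(\hat{\theta} t_i)$ to cubic order in \eqref{PL_Optimizing} (justified because $t_i = O_p(n^{-1/4})$ at criticality), rearrange to isolate $U_n := \sqrt{n}(\hat{\theta}_n^{MPLE}-1)$, and match the remainder to \eqref{eq:V_h}: the leading stochastic quantities $\mathbf{X}^T Q_n \mathbf{X} - \sum_i t_i^2$ and $\sum_i t_i^4$ converge jointly to $S_1$ and $T_1$ (the purpose of definitions \eqref{eq:ss}--\eqref{eq:tt}), while the cubic $\hat{\theta}$-coefficient contributes $\tfrac{1}{3}n\bar{\mathbf{X}}^2 \stackrel{d}{\to} \tfrac{1}{3}U_0^2$, producing $U_n \stackrel{d}{\to} V_0$. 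Part (c) then follows from part (a) and Le Cam's third lemma: under the alternative $\theta_n = 1+h/\sqrt{n}$, $n^{1/2}\bar{\mathbf{X}}^2 \stackrel{d}{\to} U_h^2$ and $\hat{\theta}_n^{MPLE} \stackrel{d}{\to} V_h$. The equality $\beta_{NP} = \beta_{MS}$ is a consequence of the fact that the limit LR $\exp(\tfrac{h}{2}u^2 - (F(h)-F(0)))$ is monotone in $u^2$, so the limiting Neyman--Pearson test depends only on $U_0^2$; the $2\P$ factor then reflects that rejecting for large $n\bar{\mathbf{X}}^2$ corresponds to two-sided rejection on $U_0$, which is symmetric about $0$ under $\H_0$.

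The principal obstacle is establishing the joint weak convergence of $\bigl(n^{1/2}\bar{\mathbf{X}}^2,\, R_n - \E R_n,\, \sum_{j\ge 2}\lambda_j^{(n)2}(v_j^{(n)T}\mathbf{X})^2 - \E[\cdot]\bigr)$ under $\P_{1,Q_n}$ with the prescribed marginals \emph{and} asymptotic independence between the principal and orthogonal components. In the LAN regime of Theorem \ref{test_low} this is eased by strict positivity of $\partial_x w(\theta_0, m(\theta_0))$; at criticality the quadratic degeneracy forces one to track fourth-order Hubbard--Stratonovich contributions, and the non-universal constant $\kappa = \gamma^2 - \|f\|_2^2$ appearing in $S_1, T_1$ must be produced via cut-metric continuity of spectral functionals combined with the spectral-gap hypothesis. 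Uniformly controlling the tails of $\{\lambda_j^{(n)}\}_{j\ge 2}$ so as to pass to the infinite-sum limits in \eqref{eq:ss}--\eqref{eq:tt}, and verifying existence and uniqueness of $\hat{\theta}_n^{MLE}, \hat{\theta}_n^{MPLE}$ with probability tending to $1$ in the face of the degenerate curvature, are the most delicate steps.
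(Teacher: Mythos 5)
Your high-level plan is sound and matches the paper's architecture (decompose the quadratic form along and orthogonal to the constant eigenvector, exhibit the quartic fluctuation for $\bar{\mathbf X}$, pass limits through the score equations, then invoke Le Cam's third lemma), but it omits the single most important device the paper uses to make the Hubbard--Stratonovich step tractable: a \emph{reduction to the Curie--Weiss model}. You propose to run a Hubbard--Stratonovich representation of $Z_n(\cdot,Q_n)$ directly, but for a general $Q_n$ this requires either positive semi-definiteness or a decomposition of $Q_n$, and in any case produces an $n$-dimensional auxiliary field; your description (a single $u$, $\log\cosh$, $e^{-u^4/12}$) is really the rank-one CW computation. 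The paper does exactly this --- it performs the one-dimensional HS only for $\P_{\theta,\mathrm{CW}}$ (Proposition \ref{ppn_aux}), conditions on the scalar auxiliary field $\phi_n$ to make the spins conditionally IID, applies a quadratic-form CLT to the orthogonal part (Lemma \ref{lem:comparison}), and then establishes the sharp normalizing-constant asymptotics and mutual contiguity of $\P_{\theta_n,Q_n}$ and $\P_{\theta_n,\mathrm{CW}}$ (Lemma \ref{Curie_weiss_LE}), which transfers every weak limit from the CW model to general $Q_n$. That contiguity step is not a side remark; without it the plan of ``HS for $Q_n$'' does not close.

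On the MPLE bookkeeping there are concrete errors. It is correct that one can Taylor expand $\tanh(\hat\theta t_i)$ to cubic order in $t_i$ because $t_i = O_p(n^{-1/4})$, but your identification of limits is off. One has $\mathbf{X}^T Q_n\mathbf{X}-\sum_i t_i^2=\mathbf{X}^T(B_n-B_n^2)\mathbf{X}\to S_1-T_1$ (not $S_1$), whereas $\sum_i t_i^4 = n\bar{\mathbf X}^4 + o_p(1)\to U_0^4$ --- it does \emph{not} converge to $T_1$; the random variable $T_1$ arises from $\mathbf{X}^T B_n^2\mathbf{X}=\sum_i(t_i-\bar{\mathbf X})^2$, which enters through $\sum_i t_i^2=n\bar{\mathbf X}^2+\mathbf{X}^T B_n^2\mathbf{X}$. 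Likewise the claim $\tfrac{1}{3}n\bar{\mathbf X}^2\stackrel{d}{\to}\tfrac{1}{3}U_0^2$ is wrong on its face since $n\bar{\mathbf X}^2 = O_p(\sqrt{n})$; the correct statement is $\tfrac{1}{3}\sqrt{n}\bar{\mathbf X}^2\stackrel{d}{\to}\tfrac{1}{3}U_0^2$, which is what survives after dividing $\tfrac{1}{3}n\bar{\mathbf X}^4$ by the denominator $n\bar{\mathbf X}^2+O_p(1)$ and multiplying by $\sqrt{n}$. These are not merely cosmetic --- written as stated, the limit would not reproduce $V_0=\tfrac{1}{3}U_0^2+(S_1-T_1)/U_0^2$. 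Finally, the existence of the MLE/MPLE with probability tending to one, which you flag as delicate, is handled in the paper by a separate combinatorial characterization (Proposition \ref{prop_exist}) plus a small-ball argument for $t_i=0$ under the CW conditioning; this needs to be supplied rather than deferred.
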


\begin{remark}
Thus, similar to Theorem \ref{test_low}, the contiguous alternatives obtained in Theorem \ref{test_critical} in the critical regime is also of size $O(\frac{1}{\sqrt{n}})$. However, the limit experiment is no longer gaussian, and so we are outside the familiar LAN setting of classical settings. This is also reflected through the non-gaussian limiting distributions for MLE and MPLE. In terms of tests of hypothesis, part (c) shows that there is a discrepancy between the asymptotic powers of PL-Test and NP-test. The MS-Test continues to be asymptotically optimal, and thus provides a computationally efficient $Q_n$ agnostic solution to our testing problem. We show in section \ref{sec:example} below that $\beta_{MS}=\beta_{PL}$ for the Curie-Weiss model.
\end{remark}

\subsection{Examples}\label{sec:example}
To illustrate our results, we will now apply our main theorems and simulation results on two concrete examples, the Curie-Weiss model  \eqref{eq:cw}, and the Ising model on the complete bi-partite graph $K_{n/2,n/2}$ (defined in \eqref{eq:bipartite}). Table \ref{tab:1} compares the asymptotic distribution of the MLE and the MPLE under both the Curie-Weiss Model and the bipartite Ising model. In the low temperature regime $\Theta_1$, the asymptotic  distribution of both the MLE and MPLE is $N(0,R(\theta_0)^{-1})$, for both the Curie-Weiss model and the bipartite Ising model (see Theorem \ref{test_low} part (b)). In fact, the same universal limit continues to hold for any sequence of graphs $G_n$ satisfying \eqref{RIC}, \eqref{CWC}, \eqref{eq:frob} and \eqref{eq:cut}. 
At the critical regime $\Theta_2$, the asymptotic distribution of the two estimators are not the same, for either the Curie-Weiss model (see figure \ref{fig:a}, or the bipartite Ising model (see figure \ref{fig:b}). The simulations in figures \ref{fig:a} and \ref{fig:b} use the limiting distribution  obtained in Theorem \ref{test_critical} part (b). \textcolor{black}{For the sake of completeness, we note that in the high temperature regime all estimators (and hence MLE and MPLE) are both inconsistent  (\cite[Theorem 3.3 part (a)]{bhattacharya2018inference}).}
\\

\begin{center}\label{tab:1}
\begin{tabular}{ |p{4cm}||p{5cm}|p{5cm}| }
 \hline
 \multicolumn{3}{|c|}{Table 1: Asymptotic distribution of MLE vs MPLE} \\
 \hline
 Regimes of $\theta$ & Curie-Weiss Model & Bipartite Ising Model\\
 \hline
 Low Temperature   &   normal, same limit   &  normal, same limit \\
 Critical Point&  non-normal, \textcolor{blue}{different limit}   &  non-normal, \textcolor{blue}{different limit}  \\
 \hline
\end{tabular}
\end{center}

In a similar manner, table \ref{tab:2} compares the  asymptotic powers of the three tests (NP, MS and PL) for the Curie-Weiss Model and the bipartite Ising model. It follows from the main theorems that the three tests have the same asymptotic power if either $\theta\in \Theta_1$ or we are in the Curie-Weiss setting. At criticality the NP test and MS test have the same power, but the PL test has a lower power for the bipartite Ising model  (see figure \ref{fig:td}). 

\begin{center}\label{tab:2}
\begin{tabular}{ |p{4cm}||p{4cm}|p{4cm}|p{4cm}|  }
 \hline
 \multicolumn{3}{|c|}{Table 2: Aymptotics powers of tests} \\
 \hline
 Regimes of $\theta$ & Curie-Weiss Model & Bipartite Ising Model\\
 \hline
 Low Temperature   &  $\beta_{NP}=\beta_{MS}=\beta_{PL}$  & $\beta_{NP}=\beta_{MS}=\beta_{PL}$\\
 Critical Point& $\beta_{NP}=\beta_{MS}=\beta_{PL}$   & \textcolor{blue}{$\beta_{NP}=\beta_{MS}>\beta_{PL}$}  \\
 \hline
\end{tabular}
\end{center}

\begin{figure}[t!] 
\begin{subfigure}{0.49\textwidth}
\includegraphics[width=\linewidth]{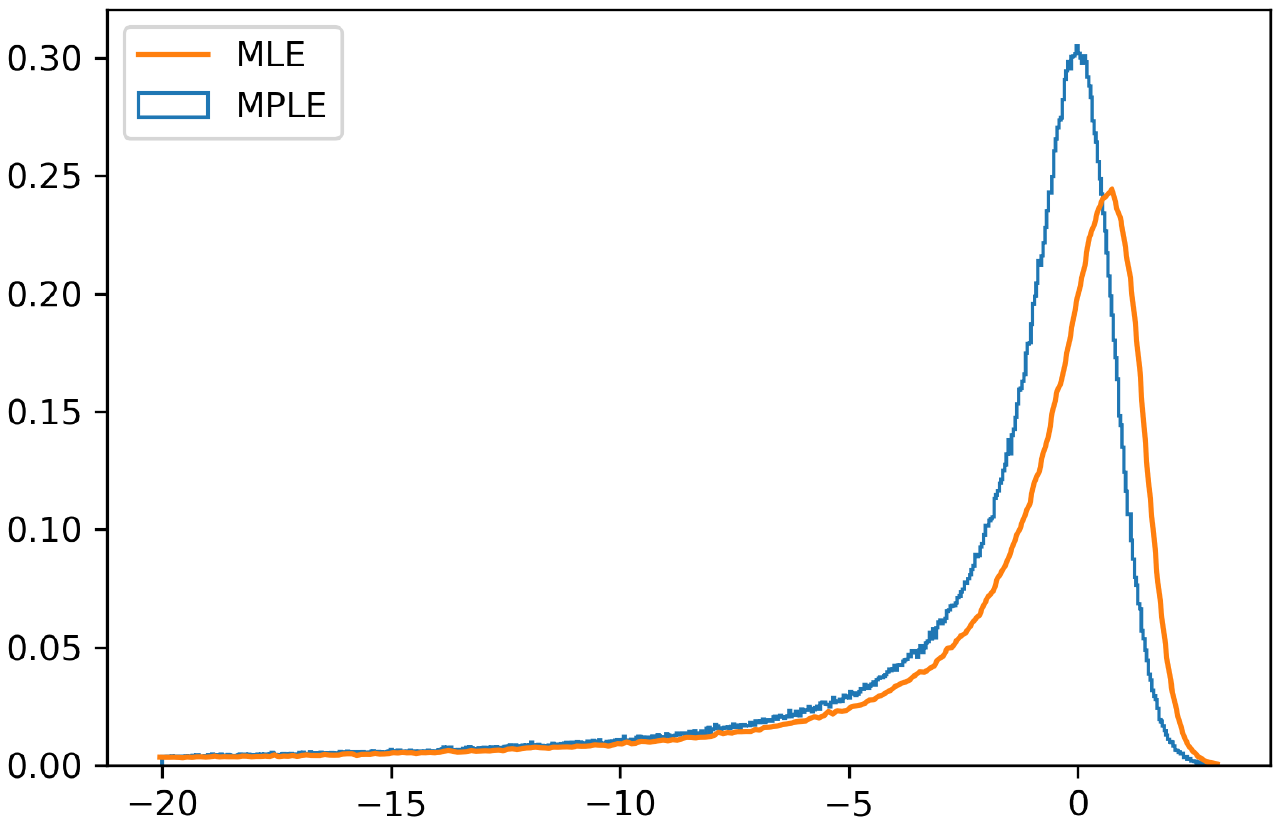}
\caption{Curie Weiss Model, $\theta_{0}=1$} \label{fig:a}
\end{subfigure}
\begin{subfigure}{0.49\textwidth}
\includegraphics[width=\linewidth]{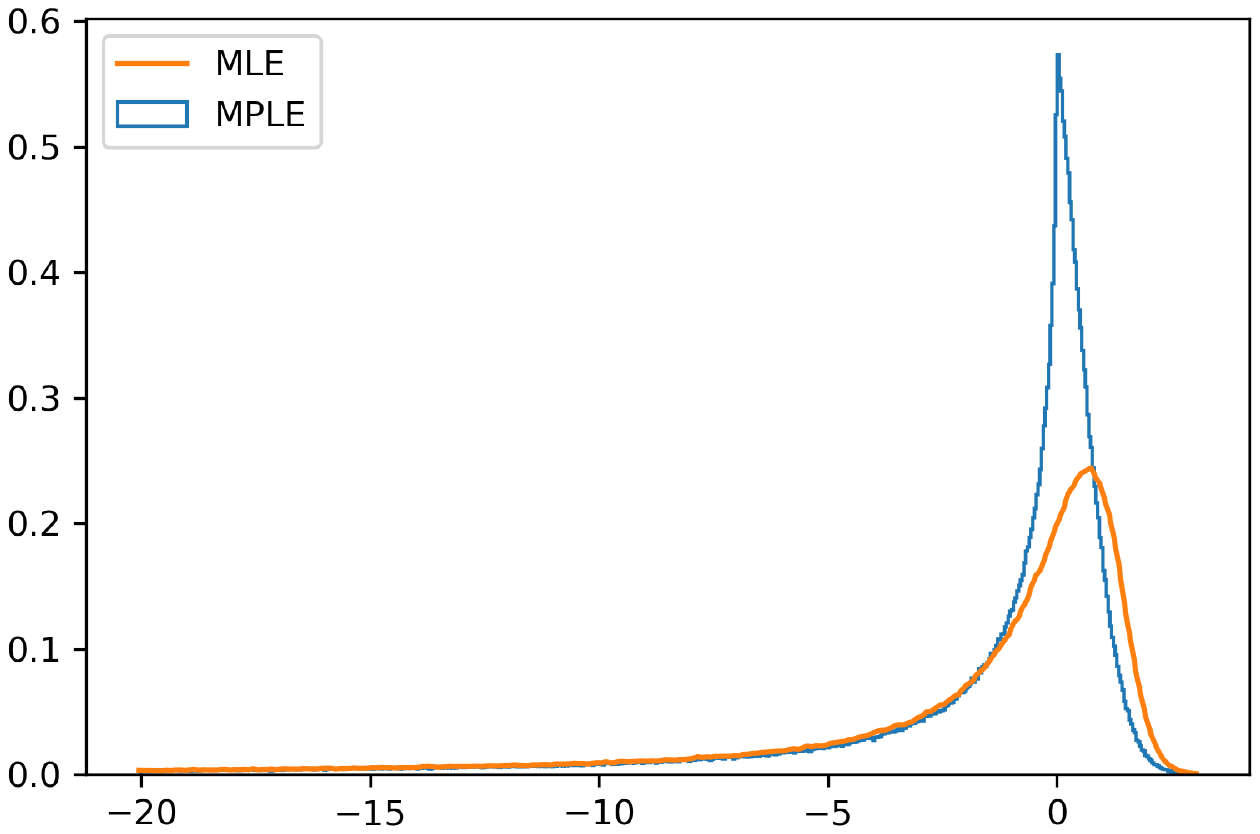}
\caption{Bipartite Ising Model, $\theta_{0}=1$} \label{fig:b}
\end{subfigure}
\caption{\textit{The left panel shows the pdf of limiting distributions of MPLE and MLE under Biparite Ising model at critical point, and the right panel shows that under Curie-weiss model at critical point.  
}} \label{fig:0}
\end{figure}

\begin{figure}[t!] 
\begin{subfigure}{0.49\textwidth}
\includegraphics[width=\linewidth, height=6cm]{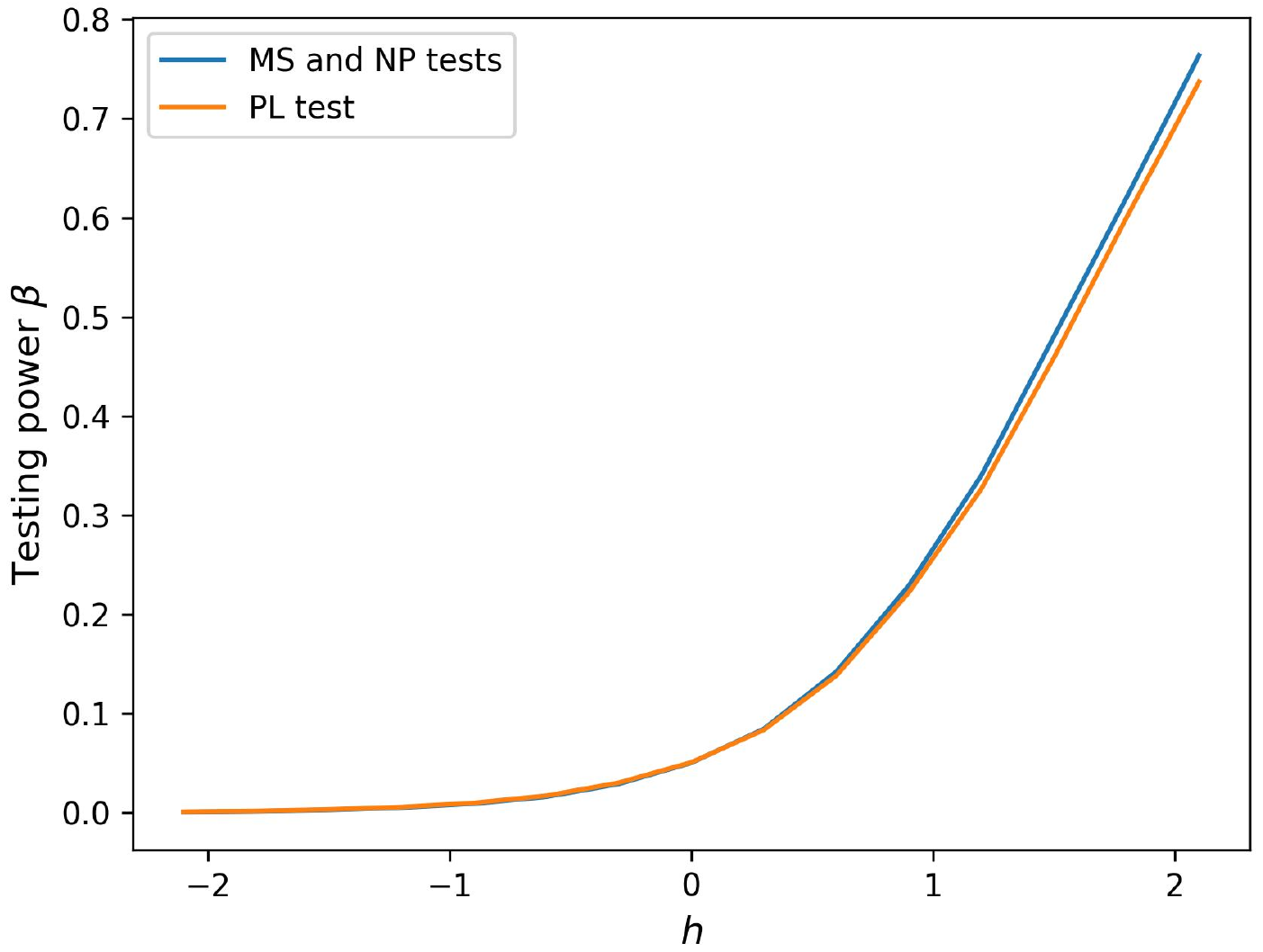}
\caption{Bipartite Ising Model, $\theta_{0}=1$} \label{fig:td}
\end{subfigure}
\begin{subfigure}{0.49\textwidth}
\includegraphics[width=\linewidth, height=6cm]{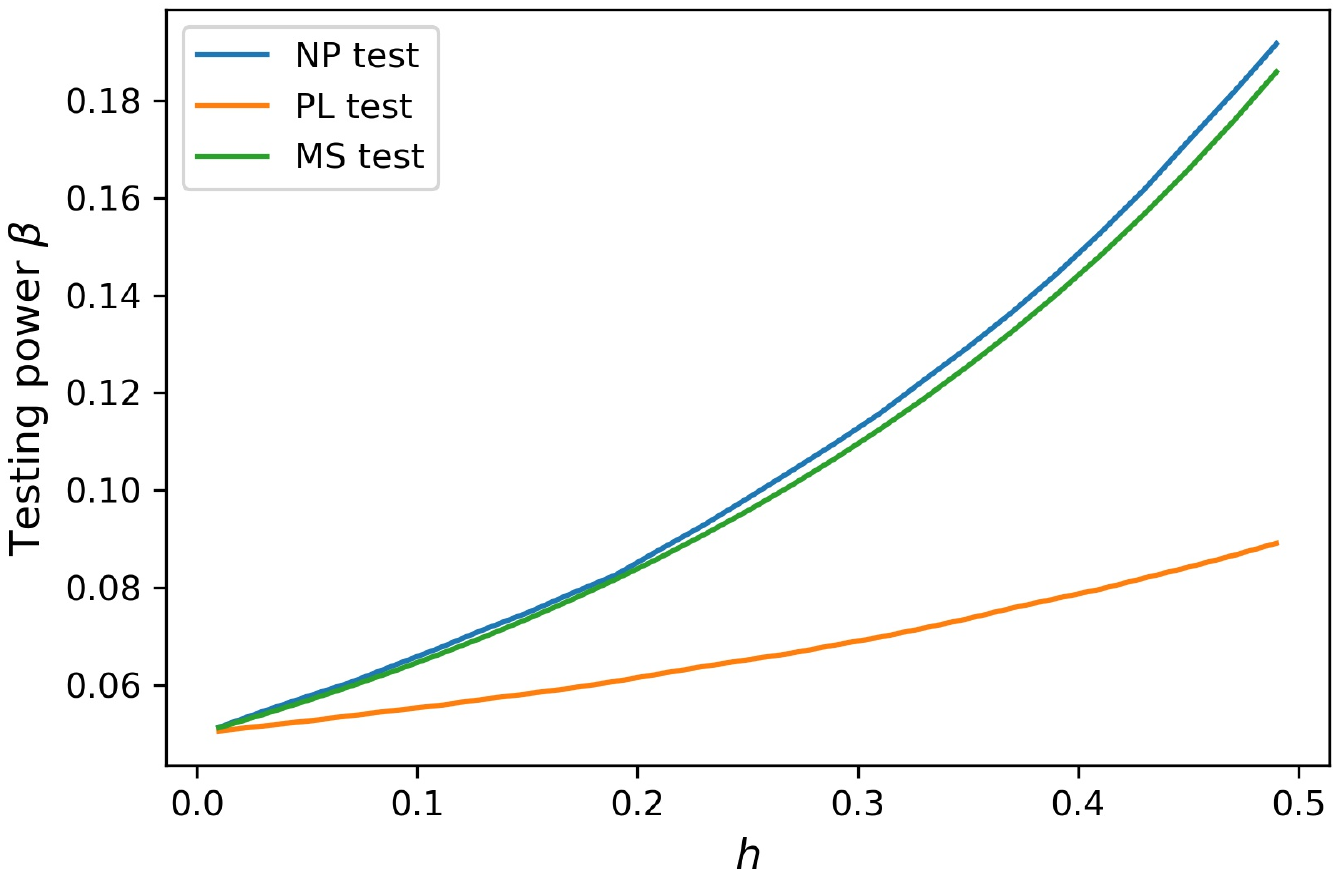}
\caption{Bipartite Ising Model, $\theta_{0}=0.1$} \label{fig:tf}
\end{subfigure}

\caption{\textit{Both panels show the testing powers versus $h$ under biparite Ising model, at the critical regime and the high temperature regime, respectively. 
}} \label{fig:1}
\end{figure}

\subsection{Main Contributions \& Future Scopes}\label{sec:222}
In this paper we establish limits of experiments for a class of one parameter Ising models on dense regular matrices. We show that the limiting experiment is universal (i.e.~does not depend on the graph sequence) and LAN in the low temperature regime, and is universal and non LAN in the critical regime. 
Using the tools developed, we analyze the performance of commonly studied estimators and tests of hypothesis, and compare their performance across different regimes. One surprising discovery is that the asymptotic performance of the MLE and the MPLE is the same in the low temperature regime, thus demonstrating that the extra computational burden of the MLE has no asymptotic gain in terms of statistical accuracy. In terms of tests of hypothesis, there is a more computationally efficient test (compared to tests based on either MLE or MPLE) based on the sample mean, which matches the optimal power function in low and critical regimes. Prior to this work, such detailed inferential result was largely non existent. We demonstrate our results by applying them to Ising models on 
(i) complete graph, and
(ii) complete bi-partite graph.
\\

Throughout this paper we focus on Ising models on dense regular matrices. It would be interesting to develop inference for Ising models on matrices which are either non regular, or non dense. Another direction of future interest is to consider the case when the Ising model has a non-zero magnetic field, and study joint inference for both the inverse temperature parameter, and the magnetic field. A more challenging problem is to extend these results to cubic and other higher order interaction models, similar to the Exponential Random Graph Models of social sciences.

\subsection{Outline}
 The rest of this paper is organized as follows:

\textcolor{black}{ In section \ref{sec:main_results} below, we give the proofs of our the main theorems. The proofs of the theorems in section \ref{sec:main_results} rely on two key lemmas about the Ising model (Lemma \ref{lem:mean} and Lemma \ref{lem:normalizing_ising}),  the proofs of which are deferred to the appendix (section \ref{sec:appen}). }

\section{Proofs of Main Theorems}\label{sec:main_results}

We begin by stating the following lemmas, which we will use to prove our main results. 

Our first lemma computes the limiting distributions of various quantities under the Ising model.

\begin{lem}\label{lem:mean}
Let ${\mathbf X}\sim \P_{\theta_n,Q_n}$, where $\theta_0>0,h\in \R$, and $\theta_n$ is as in definition \ref{def:thetan}. Assume that the matrix $Q_n$ satisfies \eqref{RIC}, \eqref{CWC}, \eqref{eq:frob} and \eqref{eq:cut}  for some $C_W, \kappa\in (0,\infty)$ and $f\in \mathcal{W}$. Also let $S_{\theta_0},T_{\theta_0}$ be random variables as defined in \eqref{eq:ss} and \eqref{eq:tt} respectively. Then, setting  $B_n:=Q_n-\frac{1}{n}{\bf 1}{\bf 1}^T$, the following conclusions hold:

\begin{enumerate}
    \item[(a)]
    If $\theta_0\in \Theta_1$, then conditional on the set $\bar{\mathbf X}>0$ we have
    $$\Big[\sqrt{n}(\bar{\mathbf X}-m(\theta_n)), {\mathbf X}^TB_n{\mathbf X}, {\mathbf X}^TB_n^2{\mathbf X}\Big]\stackrel{d}{\to}[W_{\theta_0},S_{\theta_0},T_{\theta_0}].$$
    where $m(.)$ is as in definition \ref{Three_domains}, and $W_{\theta_0}\sim N(0,\sigma^2(\theta_0))$ is independent of $(S_{\theta_0},T_{\theta_0})$.
    
    \item[(b)]
    If $\theta_0\in \Theta_2$, then 
    $$\Big[n^{1/4}\bar{\mathbf X},{\mathbf X}^TB_n{\mathbf X}, {\mathbf X}^TB_n^2{\mathbf X}\Big]\stackrel{d}{\to}[U_h,S_{\theta_0},T_{\theta_0}],$$
    where $U_h\sim\mathbb{H}_h$ (see \eqref{H_h}) is independent of $(S_{\theta_0},T_{\theta_0})$.
    
%
\end{enumerate}
\end{lem}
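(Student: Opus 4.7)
The plan is to obtain the joint limit by combining (i) the already-established marginal limits of $\bar{\mathbf X}$ from \cite{deb2020fluctuations} with (ii) a spectral decomposition argument for the quadratic forms ${\mathbf X}^T B_n {\mathbf X}$ and ${\mathbf X}^T B_n^2 {\mathbf X}$, and then (iii) showing asymptotic independence between the mean-field and orthogonal components.

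First, I would decompose the Hamiltonian using the fact that ${\bf 1}/\sqrt{n}$ is (essentially) the top eigenvector of $Q_n$ by \eqref{RIC}: writing $Q_n = B_n + \frac{1}{n}{\bf 1}{\bf 1}^T$ gives $\frac{\theta}{2}{\mathbf x}^T Q_n {\mathbf x} = \frac{\theta}{2}{\mathbf x}^T B_n {\mathbf x} + \frac{n\theta}{2}\bar{x}^2$, exhibiting the mean-field part and the orthogonal fluctuation part explicitly. The marginal convergence of $\sqrt{n}(\bar{\mathbf X}-m(\theta_n))$ in $\Theta_1$ and of $n^{1/4}\bar{\mathbf X}$ in $\Theta_2$ is supplied by \cite{deb2020fluctuations}, yielding $W_{\theta_0}$ and $U_h$ respectively; what is new here is joint convergence with the two spectral statistics and the identification of $S_{\theta_0},T_{\theta_0}$.

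For the quadratic forms, I would use the spectral decomposition $B_n = \sum_j \mu_j^{(n)} v_j^{(n)} (v_j^{(n)})^T$, noting that for each fixed $j\ge 2$ the eigenvalue $\mu_j^{(n)}\to\lambda_j$ by the spectral stability guaranteed by cut-metric convergence \eqref{eq:cut}. Conditioning on $\bar{\mathbf X}\approx m(\theta_0)$, the coordinates of ${\mathbf X}$ behave approximately like independent Rademacher variables with mean $m(\theta_0)$, weakly coupled through $B_n$; on each eigen-direction $v_j^{(n)}$ the effective susceptibility is $(1-m^2(\theta_0))/(1-\theta_0(1-m^2(\theta_0))\lambda_j)$, which is exactly the standard ``fluctuation–dissipation'' variance appearing in \eqref{eq:ss}–\eqref{eq:tt}. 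Consequently $((v_j^{(n)})^T{\mathbf X})^2$, after centering and rescaling, converges to a $\chi_1^2$ (i.e.~the $Y_j$) weighted by that susceptibility, and summing over $j$ (after verifying tightness/tail control via the $L_2$ convergence stated in Lemma \ref{lem:comparison}) produces the discrete parts of $S_{\theta_0}$ and $T_{\theta_0}$. The $\kappa$-corrections arise because $\|Q_n\|_F^2\to\gamma^2$ but $\sum_{j\ge 2}\lambda_j^2=\|f\|_2^2$, so the excess spectral mass $\kappa=\gamma^2-\|f\|_2^2$ (not concentrated on the discrete eigenvalues) produces the deterministic $\kappa$ in $T_\theta$ and, through a central limit theorem for the unresolved portion of ${\mathbf X}^T B_n {\mathbf X}$, the independent Gaussian $W^*\sim N(0,2\kappa)$ in $S_\theta$. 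The $-1$ inside the bracket in \eqref{eq:ss} comes from the centering $\E{\mathbf X}^T B_n{\mathbf X}\approx (1-m^2(\theta_0))\operatorname{tr}(B_n)=-(1-m^2(\theta_0))$ (since $Q_n$ has zero diagonal and $\operatorname{tr}({\bf 1}{\bf 1}^T/n)=1$).

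The main obstacle will be rigorously establishing joint convergence and the asymptotic independence of the mean-field component from the spectral statistics. Heuristically these live on orthogonal spectral subspaces, but this must be made precise, especially in the critical regime where $\bar{\mathbf X}$ fluctuates at order $n^{-1/4}$ and is not tightly localized. My approach would be to compute the joint Laplace/moment generating function
\[
\E\exp\bigl(s_1 a_n(\bar{\mathbf X}-m(\theta_n)) + s_2{\mathbf X}^T B_n{\mathbf X}+s_3{\mathbf X}^T B_n^2{\mathbf X}\bigr),
\]
with $a_n=\sqrt{n}$ or $n^{1/4}$, as a ratio of tilted normalizing constants. Following the strategy of \cite{deb2020fluctuations}, I would condition on (or, equivalently, perform a Hubbard–Stratonovich split at) $\bar{\mathbf X}$, reducing the inner computation to asymptotics of a quadratic-Ising partition function in which the effective interaction is $\theta(1-\bar{\mathbf X}^2)B_n$. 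Since $\bar{\mathbf X}^2\to m^2(\theta_0)$ in $\Theta_1$ (and to $0$ fast enough at criticality that corrections are lower order), the inner log-partition function decouples additively into a mean-field piece (producing $W_{\theta_0}$ or $U_h$) and a spectral piece (producing $S_{\theta_0},T_{\theta_0}$), giving both the joint convergence and the asserted independence in the limit. Verifying that the conditional-Gaussian approximation in each eigen-direction holds uniformly across a diverging number of eigenvalues, while controlling the tail by the $L_2$ bounds alluded to in Lemma \ref{lem:comparison}, is the technically delicate step.
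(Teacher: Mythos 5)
Your proposal correctly identifies most of the right ingredients: the decomposition $Q_n = B_n + \tfrac{1}{n}\mathbf 1\mathbf 1^T$, the spectral stability of $B_n$ under cut-metric convergence, the role of Lemma~\ref{lem:comparison} in producing the $\chi_1^2$ mixture, the origin of the $\kappa$-corrections in $S_\theta$, $T_\theta$ and of the $W^*\sim N(0,2\kappa)$ piece from the unresolved spectral mass, and the $-1$ centering from $\operatorname{tr}(B_n) = -1$. However, there is a genuine gap in the execution plan. If you perform a Hubbard--Stratonovich split (or condition on $\bar{\mathbf X}$) directly under $\P_{\theta_n,Q_n}$, the resulting conditional law is $\propto \exp\big(\theta\phi\sum_i x_i + \tfrac{\theta}{2}\mathbf x^T B_n \mathbf x\big)$, which is a quadratic Ising model and \emph{not} a product measure; the coordinates remain genuinely coupled through $B_n$. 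Your claim that the coordinates ``behave approximately like independent Rademacher variables with mean $m(\theta_0)$, weakly coupled through $B_n$'' is precisely what needs to be proved, and the IID quadratic-form lemma (Lemma~\ref{lem:comparison}) does not apply to this conditional law as written. The ``effective interaction $\theta(1-\bar{\mathbf X}^2)B_n$'' is a mean-field heuristic, not the actual conditional Hamiltonian.

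The paper resolves this by inserting the Curie--Weiss model $\P_{\theta_n,{\rm CW}}$ as an intermediary. Under CW, the Hubbard--Stratonovich split \emph{does} yield a conditionally IID law, so Lemma~\ref{lem:comparison} applies cleanly and gives the joint limit $\big(W_{\theta_0},\,(1-m^2(\theta_0))S_0,\,(1-m^2(\theta_0))T_0\big)$ (Lemma~\ref{XBX_CW}). One then observes that $\tfrac{d\P_{\theta_n,Q_n}}{d\P_{\theta_n,{\rm CW}}} \propto \exp\big(\tfrac{\theta_n}{2}\mathbf X^T B_n\mathbf X\big)$ depends only on $\mathbf X^T B_n\mathbf X$; after establishing the uniform-integrability/exponential-moment bound needed for mutual contiguity (Lemma~\ref{Curie_weiss_LE}), Le Cam's third lemma transfers the joint limit to $\P_{\theta_n,Q_n}$, leaving the marginal and asymptotic independence of $\bar{\mathbf X}$ intact while replacing $(S_0,T_0)$ by its exponential tilt under $e^{\tfrac{\theta_0(1-m^2)}{2}S_0}$. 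A direct characteristic-function computation then shows that this tilted pair is exactly $(S_{\theta_0},T_{\theta_0})$ --- so the $\big(1-\theta_0(1-m^2(\theta_0))\lambda_j\big)$ denominators come from the explicit tilt, not from a per-mode susceptibility calculation under a conditional law. Your direct MGF route would eventually meet the same exponential-moment and tilting computations, but without the CW intermediary and the Le Cam transfer you would need an independent argument to control the non-product conditional law, and you would lose the free proof of asymptotic independence.
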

Our second lemma gives very precise asymptotics for the normalizing constant of Ising models.
\begin{lem}\label{lem:normalizing_ising}
Let $\theta_0>0,h\in \R$, and let $\theta_n$ be as in definition \ref{def:thetan}. Assume that the matrix $Q_n$ satisfies \eqref{RIC}, \eqref{CWC}, \eqref{eq:frob} and \eqref{eq:cut}  for some $C_W, \kappa\in (0,\infty)$ and $f\in \mathcal{W}$.
\begin{enumerate}
    \item[(a)] If $\theta_0>1$ then we have
    $$\lim_{n\to\infty}\Big\{Z_n(\theta_n,Q_n)-Z_n(\theta_0,Q_n)-\frac{1}{2}\sqrt{n}hm^2(\theta_0)\Big\}=\frac{R(\theta_0)h^2}{2}. $$
    
    \item[(b)]
    If $\theta_0=1$, then we have
$$\lim_{n\to\infty}\{Z_n(\theta_n,Q_n)-Z_n(\theta_0,Q_n)\}=F(h)-F(0),$$
where $F(h)$ is as defined in \eqref{H_h}.

%
\end{enumerate}
\end{lem}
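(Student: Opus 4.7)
The plan is to differentiate in $\theta$. Using the identity $\partial_\theta Z_n(\theta, Q_n) = \tfrac{1}{2}\E_{\P_{\theta,Q_n}}[\mathbf{X}^T Q_n \mathbf{X}]$ and the substitution $\theta = \theta_0 + s/\sqrt{n}$, I write
\[
Z_n(\theta_n, Q_n) - Z_n(\theta_0, Q_n) = \frac{1}{2\sqrt{n}}\int_0^h \E_{\P_{\theta_0+s/\sqrt{n},\,Q_n}}\big[\mathbf{X}^T Q_n \mathbf{X}\big]\, ds.
\]
Then I decompose $\mathbf{X}^T Q_n \mathbf{X} = n \bar{\mathbf{X}}^2 + \mathbf{X}^T B_n \mathbf{X}$ with $B_n = Q_n - \tfrac{1}{n}\mathbf{1}\mathbf{1}^T$, so the two terms can be analysed separately using Lemma \ref{lem:mean}, provided the distributional limits are promoted to convergence of second (respectively first) moments by a uniform integrability argument valid uniformly for $s\in[0,h]$.

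For part (a) with $\theta_0 > 1$: by the spin-flip symmetry of the Ising model, $\E \bar{\mathbf{X}}^2 = \E[\bar{\mathbf{X}}^2 \mid \bar{\mathbf{X}} > 0]$, and Lemma \ref{lem:mean}(a) together with UI yields $n\E \bar{\mathbf{X}}^2 = n m^2(\theta) + \sigma^2(\theta_0) + o(1)$ uniformly for $\theta = \theta_0 + s/\sqrt{n}$, $s\in[0,h]$. Similarly $\E[\mathbf{X}^T B_n \mathbf{X}] \to \E S_{\theta_0}$, which when divided by $\sqrt{n}$ contributes only $o(1)$. Taylor-expanding $m^2(\theta_0 + s/\sqrt{n}) = m^2(\theta_0) + (2/\sqrt{n}) m(\theta_0) m'(\theta_0) s + O(s^2/n)$ and differentiating the fixed-point equation $m(\theta) = \tanh(\theta m(\theta))$ yields the key identity
\[
m'(\theta) = \frac{m(\theta)(1 - m^2(\theta))}{1 - \theta(1-m^2(\theta))} = m(\theta)\sigma^2(\theta),
\]
so $m(\theta_0) m'(\theta_0) = m^2(\theta_0)\sigma^2(\theta_0) = R(\theta_0)$. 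Plugging into the integral gives
\[
Z_n(\theta_n,Q_n) - Z_n(\theta_0,Q_n) = \frac{\sqrt{n}\, h\, m^2(\theta_0)}{2} + \frac{R(\theta_0) h^2}{2} + o(1),
\]
which is the claim.

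For part (b) with $\theta_0 = 1$: since $m(1)=0$, the $n\bar{\mathbf{X}}^2$ term is of different order. Lemma \ref{lem:mean}(b) gives $n^{1/4}\bar{\mathbf{X}} \stackrel{d}{\to} U_s$ under $\P_{1+s/\sqrt{n},Q_n}$, hence $\sqrt{n}\bar{\mathbf{X}}^2 \stackrel{d}{\to} U_s^2$, and with UI,
\[
\frac{1}{2\sqrt{n}}\int_0^h n\E \bar{\mathbf{X}}^2\, ds = \frac{1}{2}\int_0^h \sqrt{n}\E\bar{\mathbf{X}}^2\, ds + o(1) \longrightarrow \frac{1}{2}\int_0^h \E[U_s^2]\, ds.
\]
Differentiating $F(h) = \log \int \exp(-u^4/12 + h u^2/2)\,du$ under the integral sign gives $F'(s) = \tfrac{1}{2}\E[U_s^2]$, so the right-hand side equals $F(h) - F(0)$. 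Meanwhile $\E[\mathbf{X}^T B_n \mathbf{X}] \to \E S_1$, so the $B_n$ contribution is $O(n^{-1/2}) = o(1)$ and can be discarded.

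The main technical obstacle is establishing the uniform integrability of $n(\bar{\mathbf{X}} - m(\theta))^2$ in part (a) and of $\sqrt{n}\bar{\mathbf{X}}^2$ (and of $|\mathbf{X}^T B_n \mathbf{X}|$) in part (b), uniformly over the shrinking neighbourhood $\{\theta_0 + s/\sqrt{n} : s\in[0,h]\}$. This can be obtained from concentration inequalities for $\bar{\mathbf{X}}$ under dense regular Ising models (for instance, via log-Sobolev or Gaussian-type tail bounds inherited from the boundedness of the spins), which yield exponential moment bounds strong enough to control the moments after the rescalings dictated by the two regimes.
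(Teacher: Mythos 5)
Your high-level plan — differentiate in $\theta$, write the log-partition difference as an integral of $Z_n'$, split $\mathbf{X}^TQ_n\mathbf{X}=n\bar{\mathbf X}^2+\mathbf{X}^TB_n\mathbf{X}$, feed in the limit theory for $\bar{\mathbf X}$, and Taylor-expand $m(\cdot)$ — is exactly the structure the paper uses once the problem has been reduced to the Curie--Weiss model. Your computation of $m'(\theta)=m(\theta)\sigma^2(\theta)$ is the paper's Proposition \ref{ppn:m}, and your identification of $F'(s)=\tfrac12\E U_s^2$ is correct.

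The genuine gap is the uniform-integrability step, which you flag as the ``main technical obstacle'' but then wave away. Two problems. First, the tools you invoke do not apply in the regime at hand: a log-Sobolev inequality with a dimension-free constant is a high-temperature phenomenon (Dobrushin condition) and fails precisely for $\theta\ge 1$, which is the entire range of the lemma; and ``Gaussian tails inherited from boundedness of the spins'' only gives the trivial bound $n(\bar{\mathbf X}-m(\theta))^2\le 4n$, not UI. Concentration of $\bar{\mathbf X}$ around the set $\{\pm m(\theta)\}$ at scale $n^{-1/2}$ (respectively $n^{-1/4}$ at criticality), with enough moment control to pass distributional limits to expectations, is exactly the hard part. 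The paper obtains it by first passing to the Curie--Weiss model, whose conditionally-i.i.d.\ structure via the auxiliary Gaussian $\phi_n$ (Proposition \ref{ppn_aux}, Lemma \ref{lem:phi1}) gives both the weak limits and the moment bounds, and then transfers the partition-function asymptotics to $\P_{\theta,Q_n}$ through the tilting identity $e^{Z_n(\theta_n,Q_n)-Z_n(\theta_n,{\rm CW})}=\E_{\theta_n,{\rm CW}}e^{\frac{\theta_n}{2}\mathbf{X}^TB_n\mathbf{X}}$ of Lemma \ref{Curie_weiss_LE}. Your proposal has no substitute for this mechanism.

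Second, even granting UI at each fixed $s$, you ask for it uniformly over $s\in[0,h]$, which is a stronger demand than necessary. The paper avoids it entirely: $h\mapsto Z_n'(\theta_0+h/\sqrt n,\cdot)/\sqrt n$ is monotone (convexity of the log-partition function), so pointwise convergence of the rescaled derivative upgrades automatically to uniform convergence on compacts by Proposition \ref{ppn:monotone}, after which the integral converges. Replacing your ``UI uniformly in $s$'' by this monotonicity argument would simplify that part of your plan, but the pointwise UI gap described above still needs a real argument — in practice, the Curie--Weiss reduction.
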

The proofs of Lemma \ref{lem:mean} and Lemma \ref{lem:normalizing_ising} are given in the supplementary file.
\\

Our third result gives an exact characterization of the existence of MLE and MPLE, and show that they exist with high probability.
\begin{ppn}\label{prop_exist}
\begin{enumerate}
    \item[(a)]
    Let $$a_n:=\min_{{\bf x}\in \{-1,1\}^n}{\bf x}^T Q_n{\bf x}\le \max_{{\bf x}\in \{-1,1\}^n}{\bf x}^T Q_n{\bf x}=:b_n.$$
    Then the MLE exists in $\R$ iff
    $$a_n<{\mathbf X}^T Q_n{\mathbf X}<b_n.$$
    
    \item[(b)]
    
    In particular, the MLE exists with probability tending to 1 for all $\theta_0\in \Theta_1\cup \Theta_2$.

    \item[(c)]
    Let $$S=S({\bf X}):=\{i:t_i\ne 0\}.$$ Then the MPLE exists in $\R$ iff neither of the following two events happen:
    
    \begin{itemize}
        \item 
        ${\mathbf X}_i=1\text{ for all }i\in S$.
        
        \item
          ${\mathbf X}_i=-1\text{ for all }i\in S$.
    \end{itemize}
    
    \item[(d)]
    
    In particular, the MPLE exists with probability tending to $1$ for all $\theta_0\in \Theta_1\cup \Theta_2$.

\end{enumerate}
\end{ppn}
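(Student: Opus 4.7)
Parts (a) and (c) are deterministic consequences of the strict convexity of $Z_n(\cdot,Q_n)$ and strict concavity of the pseudo-log-likelihood respectively, while parts (b) and (d) follow by showing the degenerate sets identified in (a) and (c) have vanishing probability under $\P_{\theta_0,Q_n}$ via Lemma~\ref{lem:mean}.

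For part (a), $Z_n(\theta,Q_n)$ is the log-partition function of a one-parameter exponential family with non-constant sufficient statistic $T:=\tfrac{1}{2}\mathbf{X}^T Q_n\mathbf{X}\in[a_n/2,b_n/2]$, so $Z_n$ is strictly convex in $\theta$ and $Z_n'(\theta)=\E_\theta T$ is continuous and strictly increasing. A standard zero-temperature argument yields $Z_n'(\theta)\to b_n/2$ as $\theta\to+\infty$ and $Z_n'(\theta)\to a_n/2$ as $\theta\to-\infty$, since the Gibbs measure concentrates on the argmax and argmin of $T$ as $|\theta|\to\infty$. Hence $Z_n':\R\to(a_n/2,b_n/2)$ is a continuous bijection, and the score equation $Z_n'(\theta)=T$ has a unique finite root iff $a_n<\mathbf{X}^T Q_n\mathbf{X}<b_n$.

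For part (c), let $\ell(\theta):=\theta\sum_i X_i t_i-\sum_i\log\cosh(\theta t_i)$, so that $\ell''(\theta)=-\sum_i t_i^2\operatorname{sech}^2(\theta t_i)$, which is identically $0$ iff $S:=\{i:t_i\ne 0\}=\emptyset$ (in which case $\ell\equiv 0$ admits no unique maximizer) and strictly negative otherwise. For $S\ne\emptyset$, $\ell$ is strictly concave and admits a unique finite maximizer iff $\ell'$ changes sign on $\R$. Using $\tanh(\theta t_i)\to\pm\operatorname{sign}(t_i)$ as $\theta\to\pm\infty$ for $i\in S$, we compute
\[
\lim_{\theta\to\pm\infty}\ell'(\theta)\;=\;\sum_{i\in S}t_i\bigl(X_i\mp\operatorname{sign}(t_i)\bigr),
\]
which is $\leq 0$ in the $+\infty$ limit and $\geq 0$ in the $-\infty$ limit, with equality precisely when every spin in $S$ is sign-aligned (resp.\ anti-aligned) with its local field $t_i$; these two extremal alignment configurations are the two events excluded in the proposition.

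For parts (b) and (d) we apply Lemma~\ref{lem:mean} under $\P_{\theta_0,Q_n}$ with $\theta_0\in\Theta_1\cup\Theta_2$. Upper endpoint: the Rayleigh bound together with the simplicity of $\lambda_1=1$ in \eqref{eq:cut} gives $b_n=n$, attained only at $\mathbf{x}=\pm\mathbf{1}$; since Lemma~\ref{lem:mean} forces $|\bar{\mathbf X}|\to m(\theta_0)<1$ or $\bar{\mathbf X}\to 0$ in probability, $\P(\mathbf{X}=\pm\mathbf{1})\to 0$. Lower endpoint: averaging $\mathbf{x}^T Q_n\mathbf{x}$ over the uniform measure on $\{-1,1\}^n$ yields $\operatorname{tr}(Q_n)=0$, so $a_n\leq 0$; meanwhile the decomposition $\mathbf{X}^T Q_n\mathbf{X}=n\bar{\mathbf X}^2+\mathbf{X}^T B_n\mathbf{X}$ combined with Lemma~\ref{lem:mean} gives $\mathbf{X}^T Q_n\mathbf{X}\geq nm^2(\theta_0)/2>0$ w.h.p.\ in $\Theta_1$ and $\mathbf{X}^T Q_n\mathbf{X}\geq\tfrac{1}{2}\sqrt{n}\,U_0^2>0$ w.h.p.\ at criticality (using $U_0^2>0$ a.s.\ and $\mathbf{X}^T B_n\mathbf{X}=O_P(1)$), proving (b). For (d), the anti-aligned event forces $\mathbf{X}^T Q_n\mathbf{X}=-\sum_{i\in S}|t_i|\leq 0$, contradicting the strict positivity just established; the sign-aligned event forces $\mathbf{X}^T Q_n\mathbf{X}=\sum_i|t_i|\geq n|\bar{\mathbf X}|$ (using $\sum_i t_i=n\bar{\mathbf X}$), which rearranged gives $\mathbf{X}^T B_n\mathbf{X}\geq n|\bar{\mathbf X}|(1-|\bar{\mathbf X}|)$. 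Lemma~\ref{lem:mean} makes the LHS $O_P(1)$ while the RHS is of order $n$ in $\Theta_1$ and of order $n^{3/4}|U_0|$ at criticality, both contradicting w.h.p. The main technical subtlety lies in the critical regime, where $\bar{\mathbf X}$ itself vanishes and one must crucially exploit the $n^{-1/4}$ scale from Lemma~\ref{lem:mean} together with the a.s.\ non-degeneracy of $U_0$ to secure the quantitative separation.
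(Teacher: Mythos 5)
Your arguments for (a) and (b) match the paper's: (a) is the same strict-concavity observation, and (b) is a more explicit version of the paper's appeal to the continuous limiting distribution of $\mathbf{X}^T Q_n\mathbf{X}$ established in \eqref{eq:claim2} and \eqref{eq:crit_m2}. The additional bookkeeping you supply ($b_n=n$ from the row-sum condition, $a_n\le 0$ from $\operatorname{tr}Q_n=0$, and the quantitative lower bound on $\mathbf{X}^T Q_n\mathbf{X}$) is a reasonable way to make the paper's terse argument rigorous.

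For (c), your derivation of the existence criterion is correct: $\ell'(\theta)=\mathbf{X}^T Q_n\mathbf{X}-\sum_i t_i\tanh(\theta t_i)$ is strictly decreasing when $S\ne\emptyset$, so the MPLE exists iff $-\sum_i|t_i|<\mathbf{X}^T Q_n\mathbf{X}<\sum_i|t_i|$, and the two degenerate events are exactly ``$X_i=\operatorname{sign}(t_i)$ for all $i\in S$'' and ``$X_i=-\operatorname{sign}(t_i)$ for all $i\in S$''. However, your closing assertion that these coincide with the two events written in the proposition ($X_i=1$ for all $i\in S$; $X_i=-1$ for all $i\in S$) is stated without justification and is in fact false. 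For the scaled $K_{2,2}$ matrix of \eqref{eq:bipartite} with $n=4$ and $\mathbf{X}=(1,1,-1,-1)$, one computes $t=(-1,-1,1,1)$, so $S=\{1,2,3,4\}$ and $\mathbf{X}^T Q_n\mathbf{X}=-4=-\sum_i|t_i|$: the MPLE does not exist, yet neither ``$X_i=1$ for all $i\in S$'' nor ``$X_i=-1$ for all $i\in S$'' holds. So the sign-aligned/anti-aligned characterization you derive is the correct one, but it is not equivalent to the event as worded in (c). (The paper's own proof of (c) reaches its characterization by comparing $\mathbf{X}^T Q_n\mathbf{X}$ with $\sum_i t_i$ rather than $\sum_i|t_i|$ in its last step, which is where this mismatch enters.) This is the one genuine gap in your write-up: you need to either correct the claimed equivalence or re-word the two degenerate events.

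For (d), your route is genuinely different from the paper's and is more elementary. The paper transfers to the Curie--Weiss law via the contiguity of Lemma \ref{Curie_weiss_LE}, conditions on the auxiliary Gaussian $\phi_n$ of Proposition \ref{ppn_aux} so that the coordinates become i.i.d., invokes a Lyapunov-CLT anti-concentration estimate to show $\P(t_i=0\text{ for some }i)\to 0$ (hence $S=[n]$ w.h.p.), and then appeals to the weak law of $\bar{\mathbf X}$ to exclude $\mathbf{X}=\pm\mathbf{1}$. You instead rule out the degenerate events directly: anti-alignment gives $\mathbf{X}^T Q_n\mathbf{X}\le 0$, contradicting the strict positivity from (b); sign-alignment gives $\mathbf{X}^T Q_n\mathbf{X}=\sum_i|t_i|\ge n|\bar{\mathbf X}|$ and hence $\mathbf{X}^T B_n\mathbf{X}\ge n|\bar{\mathbf X}|(1-|\bar{\mathbf X}|)$, while Lemma \ref{lem:mean} makes the left side $O_P(1)$ and the right side diverge at rate $n$ in $\Theta_1$ and $n^{3/4}|U_0|$ at criticality. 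This avoids anti-concentration and the conditional CLT entirely, and, importantly, since you rule out the correctly characterized degenerate events rather than the two events worded in (c), your proof of (d) is complete and does not inherit the imprecision discussed above.
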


The proof of Proposition \ref{prop_exist} is given after the proof of the main results.
\\

Our final result is a calculus proposition which computes derivatives of the function $m(.)$ defined in \ref{Three_domains}.

\begin{ppn}\label{ppn:m}

Let $m(.):(1,\infty)\mapsto (0,1)$ be as in definition \ref{Three_domains}. Then with $\theta_n=\theta_0+\frac{h}{\sqrt{n}}$ (as in definition \ref{def:thetan}) for some $\theta_0\in \Theta_1$ and $h>0$ we have
$$\lim_{n\to\infty}\frac{m(\theta_n)-m(\theta_0)}{\frac{h}{\sqrt{n}}}=m(\theta_0)\sigma^2(\theta_0).$$
\end{ppn}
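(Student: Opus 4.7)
The plan is to recognize this as a straightforward implicit-differentiation computation, since $m(\theta)$ is defined via the fixed point equation $w(\theta,m(\theta))=0$ with $w(\theta,x)=x-\tanh(\theta x)$, and the claimed limit is simply $m'(\theta_0)$ times $1$.

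First, I would invoke the implicit function theorem to conclude that $m(\cdot)$ is $C^1$ in a neighborhood of $\theta_0 \in \Theta_1$. The hypotheses are satisfied because Proposition \ref{roots_domains}(a) gives
\[
\frac{\partial w}{\partial x}\bigg|_{x=m(\theta_0)} = 1-\theta_0\bigl(1-m^2(\theta_0)\bigr) > 0,
\]
so this partial derivative is nonzero, and the function $w$ is jointly smooth in $(\theta,x)$. Differentiability at $\theta_0$ is exactly what we need: once established, it will give
\[
\lim_{n\to\infty}\frac{m(\theta_n)-m(\theta_0)}{h/\sqrt{n}} = m'(\theta_0),
\]
since $\theta_n-\theta_0 = h/\sqrt n \to 0$.

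Next, I would compute $m'(\theta_0)$ by implicit differentiation of $w(\theta,m(\theta))=0$:
\[
m'(\theta_0) = -\frac{\partial w/\partial\theta}{\partial w/\partial x}\bigg|_{x=m(\theta_0)}.
\]
Using $\partial w/\partial\theta = -x\operatorname{sech}^2(\theta x) = -x(1-\tanh^2(\theta x))$, and substituting $\tanh(\theta_0 m(\theta_0))=m(\theta_0)$ from the fixed point relation, the numerator becomes $-m(\theta_0)(1-m^2(\theta_0))$. Combined with the denominator $1-\theta_0(1-m^2(\theta_0))$ computed above, this yields
\[
m'(\theta_0) = \frac{m(\theta_0)(1-m^2(\theta_0))}{1-\theta_0(1-m^2(\theta_0))} = m(\theta_0)\,\sigma^2(\theta_0),
\]
where the last equality is just the definition of $\sigma^2(\theta_0)$ from \eqref{eq:sigma}.

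There is no real obstacle here; the only thing one must be careful about is that $m(\cdot)$ is the \emph{chosen} positive root from Definition \ref{Three_domains}, so one should note that the implicit function theorem produces a smooth local branch, and by the uniqueness part of Proposition \ref{roots_domains}(a) this branch must coincide with $m(\cdot)$ on a neighborhood of $\theta_0$. Combining the two displays completes the proof.
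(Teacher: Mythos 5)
Your proof is correct and follows essentially the same route as the paper's: invoke the implicit function theorem at the simple zero of $w(\theta,x)=x-\tanh(\theta x)$, then differentiate the fixed-point relation to get $m'(\theta_0)=m(\theta_0)\sigma^2(\theta_0)$. The only cosmetic difference is that you apply the IFT formula $m'=-\,(\partial w/\partial\theta)/(\partial w/\partial x)$ directly, whereas the paper differentiates $m(\theta)=\tanh(\theta m(\theta))$ and solves for $m'(\theta)$; your added remark about the IFT branch agreeing with the chosen positive root from Definition \ref{Three_domains} is a careful touch that the paper leaves implicit.
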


The proof of Proposition \ref{ppn:m} is deferred to the supplementary file.


\subsection{Proof of Theorem \ref{test_low}}

Before we begin, we use Lemma \ref{lem:mean} parts (a) and (b) to conclude that if  $(\theta_0,h)\in (\Theta_1\cup \Theta_2,\R)$ we have ${\mathbf X}^TB_n{\mathbf X}=O_p(1)$ under $\P_{\theta_0+hn^{-1/2},Q_n}$, and so
\begin{align}\label{eq:all_regimes}
  {\mathbf X}^T Q_n{\mathbf X}=n\bar{\mathbf X}^2+{\mathbf X}^TB_n{\mathbf X}=n\bar{\mathbf X}^2+O_p(1).
\end{align}
$ $\newline
\textbf{\textit{Part (a):}}

Let $I=\{h_1,\ldots,h_k\}$ with $h_1<h_2<\ldots<h_k$ for some positive integer $k$. Then setting $\theta_{n,i}:=\theta_0+h_i n^{-1/2}$  we have
\begin{small}
\begin{align}\label{eq:log1}
\log \frac{d\P_{\theta_{n,i},Q_{n}}(\mathbf{X})}{d\P_{\theta_{0},Q_{n}}(\mathbf{X})}
= Z_n(\theta_0,Q_n)- Z_n(\theta_{n,i},Q_n)+\frac{h_i\sqrt{n}m^2(\theta_0)}{2} +\frac{h_i}{2}\Big(\frac{{\mathbf X}^T Q_n{\mathbf X}}{\sqrt{n}}-\sqrt{n}m^2(\theta_0)\Big).
\end{align}
\end{small}
By part (a) of Lemma \ref{lem:normalizing_ising} we have 
\begin{align*}
    Z_n(\theta_{n,i},Q_n)-Z_n(\theta_0,Q_n)-\frac{h_i\sqrt{n}m^2(\theta_0)}{2}
    \to \frac{R(\theta_0)h_i^2}{2}.
\end{align*}
Also, using \eqref{eq:all_regimes}, under $\P_{\theta_0,Q_n}$ we have
\begin{align}\label{eq:claim2}
\frac{1}{\sqrt{n}}{\mathbf X}^T Q_n{\mathbf X}-\sqrt{n}m^2(\theta_0)=\sqrt{n}\Big(\bar{\mathbf X}^2-m^2(\theta_0)\Big)+O_p\Big(\frac{1}{\sqrt{n}}\Big)\stackrel{d}{\to}2m(\theta_0)W_{\theta_0},
\end{align}
where the last step we use part (a) of Lemma \ref{lem:mean} to conclude that under $\P_{\theta_0,Q_n}$ we have
    \begin{align}\label{eq:partb}\sqrt{n}\Big(\bar{\mathbf X}^2-m^2(\theta_0)\Big)=\sqrt{n}\Big(\bar{\mathbf X}-m(\theta_0)\Big) \Big(\bar{\mathbf X}+m(\theta_0)\Big)\stackrel{d}{\to}2m(\theta_0)W_{\theta_0},    \end{align}
where $W_{\theta_0}\sim N(0,\sigma^2(\theta_0))$.
The above calculation gives that under $\P_{\theta_0,Q_n}$ we have
\begin{align*}
   \Big[ \log \frac{d\P_{\theta_{n,i},Q_{n}}(\mathbf{X})}{d\P_{\theta_{0},Q_{n}}(\mathbf{X})}\Big]_{1\le i\le k}
   \stackrel{d}{\to}&\Big[-\frac{R(\theta_0)h_i^2}{2}+m(\theta_0)h_i W_{\theta_0}\Big]_{1\le i\le k}.
\end{align*}
Also note that if $W\sim N(0,R(\theta_0)^{-1})$ then
\begin{align*}\Big[\log \frac{dN(h_i,R(\theta_0)^{-1})}{dN(0, R(\theta_0)^{-1})}(W)\Big]_{1\le i\le k}=&\Big[- \frac{ R(\theta_0)h_i^2}{2}+R(\theta_0)Wh_i\Big]_{1\le i\le k}\\
\stackrel{d}{=}&\Big[-\frac{R(\theta_0)h_i^2}{2}+m(\theta_0)h_i W_{\theta_0}\Big]_{1\le i\le k}.
\end{align*}
It then follows from the last two displays that under $\P_{\theta_0,Q_n}$ we have
 $$\Big[ \log \frac{d\P_{\theta_{n,i},Q_{n}}(\mathbf{X})}{d\P_{\theta_{0},Q_{n}}(\mathbf{X})}\Big]_{1\le i\le k}
   \stackrel{d}{\to}\Big[\log \frac{dN(h_i,R(\theta_0)^{-1})}{dN(0, R(\theta_0)^{-1})}(W)\Big]_{1\le i\le k},$$
which verifies convergence of experiments.

\textbf{\textit{Part (b):}}

\begin{itemize}
\item{\bf MLE}

Suppose ${\mathbf X}\sim \P_{\theta,Q_n}$. 
Using Proposition \ref{prop_exist} part (b), it follows that the MLE exists with probability tending to $1$. 
Also, the MLE $\hat{\theta}_n^{MLE}$ satisfies the equation $$Z_n'(\hat{\theta}_n^{MLE},Q_n)=\frac{1}{2}{\mathbf X}^T Q_n{\mathbf X}.$$
Proceeding to find the limit distribution of $\hat{\theta}^{MLE}_n$, for any $h\in \R$ setting $\theta_n:=\theta_0+h/\sqrt{n}$ we have
\begin{align}\label{eq:mle_low}
\notag&\P_{\theta_{0},Q_{n}}\big(\sqrt{n}(\hat{\theta}_{n}^{MLE}-\theta_{0})\leq h\big)\\
\notag=&
\P_{\theta_{0},Q_{n}}\Big({\mathbf X}^T Q_n{\mathbf X}\le 2 Z_n'(\theta_n)\Big)\\
=&\P_{\theta_{0},Q_{n}}\Big(\frac{1}{\sqrt{n}}{\mathbf X}^T Q_n{\mathbf X}-\sqrt{n}m^2(\theta_{0})\le \frac{2Z_n'(\theta_n,Q_n)}{\sqrt{n}}-\sqrt{n}m^2(\theta_{0})\Big).
\end{align}
We now claim that
\begin{align}\label{eq:claim3}
\frac{2Z_n'(\theta_n,Q_n)}{\sqrt{n}}-\sqrt{n}m^2(\theta_0)=2hR(\theta_0)+o(1).
\end{align}
Given \eqref{eq:claim3}, using \eqref{eq:claim2} and \eqref{eq:mle_low}  we get
\begin{align*}
    \P_{\theta_{0},Q_{n}}\big(\sqrt{n}(\hat{\theta}_{n}^{MLE}-\theta_{0})\le h\big)=
& =\P_{\theta_{0},Q_{n}}\big(\frac{1}{\sqrt{n}}\mathbf{X}^{T}Q_{n}\mathbf{X}-\sqrt{n}m^{2}(\theta_0)\leq2hR(\theta_0)+o(1)\big)\\
&\longrightarrow \P\Big(N\Big(0,R(\theta_0)^{-1}\Big)\leq h\Big),
\end{align*}
which verifies asymptotic distribution for the MLE. 
\\

Proceeding to verify \eqref{eq:claim3}, note that the LHS of the display in part (a) of Lemma \ref{lem:normalizing_ising} is convex in $h$, and converges point-wise to a convex function which is differentiable everywhere. It follows that the derivatives of the functions also converge, which gives \eqref{eq:claim3}.

\item{\bf MPLE}

It follows from \cite[Corollary 3.1 (b)]{bhattacharya2018inference} that if $\theta_0>1$, the MPLE $\hat{\theta}_n^{MPLE}$ exists with probability going to $1$, and is $\sqrt{n}$ consistent (existence without consistency also follows from Proposition \ref{prop_exist}). It thus suffices to prove asymptotic normality. To this effect, we show the more general result that for any $h\in \R$, setting $\theta_n:=\theta_0+\frac{h}{\sqrt{n}}$, under $\P_{\theta_n,Q_n}$ we have
\begin{align}\label{eq:pl_gen}
\sqrt{n}(\hat{\theta}_n^{MPLE}-\theta_0)\stackrel{d}{\to}N\Big(h, R(\theta_0)^{-1}\Big).
\end{align}
The claimed limiting distribution for $\hat{\theta}_n^{MPLE}$ follows form this, on setting $h=0$.

Proceeding to prove \eqref{eq:pl_gen}, using \eqref{PL_Optimizing}, we have
\begin{align*}
    \mathbf{X}^{T}Q_{n}\mathbf{X}=&\sum\limits_{i=1}^{n}t_i\tanh(\hat{\theta}_n^{MPLE}t_i)\\
    =&\sum_{i=1}^nt_i\tanh(\theta_{0} t_i)+\sum\limits_{i=1}^{n}t_i^2\text{sech}^{2}({\xi}_nt_i)(\hat{\theta}_{n}^{MPLE}-\theta_{0}),
\end{align*}
where ${\xi}_n$ lies between $\theta_{0}$ and $\hat{\theta}_{n}^{MPLE}$. The last display gives
\begin{equation}\label{eq:ratio_pl}
    \sqrt{n}(\hat{\theta}_{n}^{MPLE}-\theta_{0})=\frac{\frac{1}{\sqrt{n}}(\mathbf{X}^{T}Q_{n}\mathbf{X}-\sum\limits_{i=1}^{n}t_i\tanh(\theta_{0} t_i))}{\frac{1}{n}\sum\limits_{i=1}^{n}t_i^{2}\text{sech}^{2}({\xi}_nt_i)}.
\end{equation}

For analyzing the RHS of \eqref{eq:ratio_pl}, use part (a) to note that $\P_{\theta_0,Q_n}$ we have
\begin{align*}
  \log \frac{d\P_{\theta_n,Q_n}}{d\P_{\theta_0,Q_n}}({\mathbf X})\stackrel{d}{\to}N\Big(-\frac{h^2R(\theta_0)}{2}, h^2 R(\theta_0) \Big).
\end{align*}
Then, using Le Cam's third lemma, it follows that the measures $\P_{\theta_0,Q_n}$ and $\P_{\theta_n,Q_n}$ are mutually contiguous. Along with \cite[Lemma 2.1 part (a)]{deb2020fluctuations}, this gives that under $\P_{\theta_n,Q_n}$, we have
\begin{align*}
\Big(\sum_{i=1}^n\Big[t_i-m(\theta_0)\Big]^2|\bar{\mathbf X}>0\Big)=O_p(1).
\end{align*}

On the set $\bar{\mathbf X}>0$, under $\P_{\theta_n,Q_n}$ this gives
\begin{align*}
   & \sum\limits_{i=1}^{n}t_i\tanh(\theta_{0}t_i)\\
   =&\sum\limits_{i=1}^{n}\Big[m(\theta_0)\tanh\Big(\theta_{0} m(\theta_0)\Big)+m(\theta_0)\Big(1+\theta_{0} (1-m^{2}(\theta_0))\Big)\Big(t_i-m(\theta_0)\Big)\Big]+O_p\Big(\sum_{i=1}^n\Big(t_i-m(\theta_0)\Big)^2\Big)\\
   =&nm^2(\theta_0)+nm(\theta_0)\Big(1+\theta_0(1-m^2(\theta_0))\Big)\Big(\bar{\mathbf X}-m(\theta_0)\Big)+O_{p}(1).
\end{align*}
Since \eqref{eq:all_regimes} gives ${\mathbf X}^T Q_n{\mathbf X}=n\bar{\mathbf X}^2+O_p(1)$, the above display implies that under $\P_{\theta_n,Q_n}$, 
\begin{align*}
   \notag &\mathbf{X}^{T}Q_{n}\mathbf{X}-\sum\limits_{i=1}^{n}t_i\tanh(\theta_{0} t_i)\\
   =&n\Big(\bar{\mathbf X}^2-m^2(\theta_0)\Big)-nm(\theta_0)\Big(1+\theta_0(1-m^2(\theta_0))\Big)\Big(\bar{\mathbf X}-m(\theta_0)\Big)+O_p(1)\\
    =&n(\bar{\mathbf X}-m(\theta_0))\Big[\bar{\mathbf X}-\theta_0 m(\theta_0)(1-m^2(\theta_0))\Big]+O_p(1).
\end{align*}
Also, under $\P_{\theta_n,Q_n}$ we have
\begin{align}\label{eq:partcc}
    \sqrt{n}(\bar{\mathbf X}-m(\theta_0))=\sqrt{n}(\bar{\mathbf X}-m(\theta_n))+\sqrt{n}(m(\theta_n)-m(\theta_0))
    \stackrel{d}{\to}N(m(\theta_0)\sigma^2(\theta_0)h,\sigma^2),
\end{align}
where the last step uses part (a) of Lemma \ref{lem:mean} along with Proposition \ref{ppn:m}.
The last two displays give that under $\P_{\theta_n,Q_n}$,
\begin{align*}
    \frac{1}{\sqrt{n}}\Big({\mathbf X}^T Q_n{\mathbf X}-\sum_{i=1}^nt_i\tanh(\theta_0 t_i)\Big)\stackrel{d}{\to}\Big(1-\theta_0(1-m^2(\theta_0))\Big)N\Big(hm^2(\theta_0)\sigma^2(\theta_0), m^2(\theta_0)\sigma^2(\theta_0)\Big).
\end{align*}
Also, using contiguity of the two measures $\P_{\theta_0,Q_n}$ and $\P_{\theta_n,Q_n}$ along with consistency of $\hat{\theta}_n^{MPLE}$ gives $\xi_n\stackrel{p}{\to}\theta_0$ under both measures, and so
$$\frac{1}{n}\sum_{i=1}^nt_i^2\text{sech}^2(\xi_nt_i)\stackrel{p}{\to}m^2(\theta_0) \text{sech}^2\Big(\theta_0 m(\theta_0)\Big)=m^2(\theta_0)\Big(1-m^2(\theta_0)\Big).$$
Combining the last two displays along with \eqref{eq:ratio_pl} we get that under $\P_{\theta_n,Q_n}$,
\begin{align*}
\sqrt{n}(\hat{\theta}_n^{MPLE}-\theta_0)\stackrel{d}{\to}&\Big[\frac{1-\theta_0(1-m^2(\theta_0))}{m^2(\theta_0)(1-m^2(\theta_0))}\Big]N\Big(hm^2(\theta_0)\sigma^2(\theta_0),m^2(\theta_0)\sigma^2(\theta_0)\Big)\\
=&\frac{1}{m^2(\theta_0)\sigma^2(\theta_0)}N\Big(hm^2(\theta_0)\sigma^2(\theta_0),m^2(\theta_0)\sigma^2(\theta_0)\Big),
\end{align*}
which is equivalent to \eqref{eq:pl_gen}.
\end{itemize}

\textbf{\textit{Part (c):}}

\begin{itemize}
    \item {\bf MS Test}
    
   If ${\mathbf X}\sim \P_{\theta_0,Q_n}$, then \eqref{eq:partb} gives $$K_n(\alpha)=nm^2(\theta_0)+2z_\alpha \sqrt{nR(\theta_0)}+o(\sqrt{n}).$$
  Also, using \eqref{eq:partcc}, under $\P_{\theta_n,Q_n}$ we have
  $$\sqrt{n}\Big(\bar{\mathbf X}^2-m^2(\theta_0)\Big)\stackrel{d}{\to}2m(\theta_0) N\Big(m(\theta_0)\sigma^2(\theta_0) h,\sigma^2\Big)=N(2m^2(\theta_0)\sigma^2(\theta_0)h,4m^2(\theta_0)\sigma^2(\theta_0)\Big).$$
  Using the last two displays, we have
  \begin{align*}
      \P_{\theta_n,Q_n}(n\bar{\mathbf X}^2>K_n(\alpha))=&\P_{\theta_n,Q_n}\Big(\sqrt{n}(\bar{\mathbf X}^2-m^2(\theta_0))>2z_\alpha \sqrt{R(\theta_0)}\Big)+o(1)\\
      =&\P\Big(N(2R(\theta_0) h, 4R(\theta_0))>2z_\alpha \sqrt{R(\theta_0)}\Big)+o(1)\\
      =&\P\Big(N(h\sqrt{R(\theta_0)}, 1)>z_\alpha \Big)+o(1).
  \end{align*}
  Thus we have $\beta_{MS}=1-\Phi(z_\alpha-h\sqrt{R(\theta_0)})$, as claimed.
  \\
  
  \item{\bf NP Test}
  
  Note that $${\mathbf X}^T Q_n{\mathbf X}=n\bar{\mathbf X}^2+O_p(1),$$
  using Lemma \ref{lem:mean} part (a) under $\P_{\theta_0,Q_n}$ and  $\P_{\theta_n,Q_n}$. Thus ${\mathbf X}^T Q_n{\mathbf X}$ has the same asymptotic distribution as $n\bar{\mathbf X}^2$, under both null and the alternative. This gives $\beta_{NP}=\beta_{MS}$, as desired.
  \\
  
 \item
 {\bf PL Test}
 
 Using the limit distribution of $\hat{\theta}_n^{MPLE}$ in part (b) (invoke \eqref{eq:pl_gen} under $\P_{\theta_0,Q_n}$) gives
 $$K_n(\alpha)=\theta_0+\frac{z_\alpha}{\sqrt{nR(\theta_0)}}+o\Big(\frac{1}{\sqrt{n}}\Big).$$
 Thus we have
 \begin{align*}
     \P_{\theta_n,Q_n}(\hat{\theta}_n^{MPLE}>K_n(\alpha))=&\P_{\theta_n,Q_n}\Big(\sqrt{n}(\hat{\theta}_n^{MPLE}-\theta_0)>\frac{z_\alpha}{\sqrt{R(\theta_0)}}\Big)+o(1)\\
     =&\P\Big(N\Big(h,\frac{1}{R(\theta_0)}\Big)>\frac{z_\alpha}{\sqrt{R(\theta_0)}}\Big)+o(1),
 \end{align*}
 where the last step again uses \eqref{eq:pl_gen} under $\P_{\theta_n,Q_n}$. The last term in the display above converges to $$\P(N( h\sqrt{R(\theta_0)},1)>z_\alpha)=1-\Phi(z_\alpha-h\sqrt{R(\theta_0)}),$$
 thus giving the same formula for $\beta_{PL}$.

\end{itemize}


\subsection{Proof of Theorem \ref{test_critical}}

$ $\newline
\textbf{\textit{Part (a):}}

As in the proof of Theorem \ref{test_low}, let $I:=\{h_1,\ldots,h_k\}$ with $\{h_1<h_2<\ldots<h_k\}$ for some positive integer $k$, and let $\theta_{n,i}:=\theta_0+\frac{h_i}{\sqrt{n}}$ for $1\le i\le k$. It thus suffices to analyze the terms in the RHS of \eqref{eq:log1}. To this effect, use Lemma \ref{lem:normalizing_ising} part (b) to get
$$Z_n(\theta_{n,i},Q_n)-Z_n(\theta_0,Q_n)\to F(h)-F(0).$$
Proceeding to analyze the second term in the RHS of \eqref{eq:log1}, 
using \eqref{eq:all_regimes}, under $\P_{\theta_0,Q_n}$ we have
\begin{align}\label{eq:crit_m2}
\frac{{\mathbf X}^T Q_n{\mathbf X}}{\sqrt{n}}=\sqrt{n}\bar{\mathbf X}^2+O_p\Big(\frac{1}{\sqrt{n}}\Big)\stackrel{d}{\to}U_{0}^2, 
\end{align}
where the last step uses part (b) of Lemma \ref{lem:mean}. Combining the last two displays along with \eqref{eq:log1}, under $\P_{\theta_0,Q_n}$ we have
\begin{align*}
\Big[\log \frac{d\P_{\theta_{n,i},Q_n}}{d\P_{\theta_0,Q_n}}({\mathbf X})\Big]\stackrel{d}{\to}\Big[-F(h_i)+F(0)+\frac{h_i}{2}U_0^2\Big]_{1\le i\le k}.
\end{align*}
Also, we have
\begin{align*}
    \log \frac{d\H_{h_i}}{d\H_0}(u)=\frac{1}{2}h_iu-F(h_i)+F(0).
\end{align*}
Combining the above two displays, under $\P_{\theta_0,Q_n}$ we have
\begin{align*}
    \Big[\log \frac{d\P_{\theta_{n,i},Q_n}}{d\P_{\theta_0,Q_n}}({\mathbf X})\Big]\stackrel{d}{\to} \Big[\frac{d\H_{h_i}}{d\H_0}(U_0)\Big]_{1\le i\le k},
\end{align*}
where $U_0\sim \H_0$. This verifies the desired convergence of experiments.

\textbf{\textit{Part (b):}}

\begin{itemize}
    \item {\bf MLE}
    
    Existence of MLE follows from Proposition \ref{prop_exist} part (b).

Proceeding to find the limiting distribution, for any $h\in \R$ setting $\theta_n:=1+\frac{h}{\sqrt{n}}$, differentiating the second display in part (b) of Lemma \ref{lem:normalizing_ising} with respect to $h$ we get
\begin{align*}
\frac{2Z_n'(\theta_n,Q_n)}{\sqrt{n}}=\E U_h^2+o(1).
\end{align*}
Consequently, using calculations similar to the proof of Theorem \ref{test_low} part (b) we have
\begin{align*}
\P_{1,Q_{n}}\big(\sqrt{n}(\Tilde{\theta}_{n}^{MLE}-1)\leq h\big)
=&\P_{1,Q_{n}}\big(\mathbf{X}^{T}Q_{n}\mathbf{X}\leq 2Z_n'(\theta_n,Q_n)\big) \\
 =&\P_{1,Q_{n}}\big(\frac{1}{\sqrt{n}}\mathbf{X}^{T}Q_{n}\mathbf{X}\leq \E U_h^2+o(1)\big)\\
=& \P\big(U_0^{2}\leq\E U_h^{2} \big)+o(1),
\end{align*}
which derives the limiting distribution of the MLE.
\\

\item{\bf MPLE}

Existence of the MPLE follows from Proposition \ref{prop_exist} part (d).  Proceeding to show consistency, use \eqref{PL_Optimizing} to note that on the set $\hat{\theta}_n^{MPLE}>1+\delta$ under $\P_{1,Q_n}$ we have
\begin{align*}
    {\mathbf X}^T Q_n{\mathbf X}=&\sum_{i=1}^nt_i\tanh(\hat{\theta}_n^{MPLE} t_i)\\
    \ge &\sum_{i=1}^nt_i\tanh((1+\delta)t_i)\\
    =&n\bar{\mathbf X}\tanh((1+\delta)\bar{\mathbf X})+O_p\left(\sum_{i=1}^n(t_i-\bar{\mathbf X})^2\right).
    \end{align*}
    Using Lemma \ref{lem:mean} part (b), under $\P_{1,Q_n}$ we get
    $${\mathbf X}^TB_n{\mathbf X}=O_p(1), \quad \sum_{i=1}^n(t_i-{\mathbf X})^2={\mathbf X}^TB_n^2{\mathbf X}=O_p(1).$$
    Combining the last two displays, under $\P_{1,Q_n}$ we have
    $$\sqrt{n}\bar{\mathbf X}^2\ge \sqrt{n}\bar{\mathbf X}\tanh((1+\delta)\bar{\mathbf X})+O_p\Big(\frac{1}{\sqrt{n}}\Big).$$ But this is a contradiction, as the LHS of the above display converges in distribution $U_0^2$ under $\P_{1,Q_n}$ (by Lemma \ref{lem:mean} part (b)), and the RHS converges in distribution to $(1+\delta)U_0^2$ (which is larger).
Thus we have shown that for any $\delta>0$ we have 
$\P_{1,Q_n}(\hat{\theta}_n^{MPLE}\ge 1+\delta)\to 0.$
A similar proof gives $\P_{1,Q_n}(\hat{\theta}_n^{MPLE}\le 1-\delta)\to 0,$
and so $\hat{\theta}_n^{MPLE}\stackrel{p}{\to}1$.
\\

Proceeding to find the limiting distribution of $\hat{\theta}_n^{MPLE}$, setting $\theta_n:=1+\frac{h}{\sqrt{n}}$ for some $h\in \R$ we work under the measure $\P_{\theta_n,Q_n}$, and claim that
\begin{align}\label{eq:claim5}
\sqrt{n}(\hat{\theta}_n^{MPLE}-1)\stackrel{d}{\to}V_h.
\end{align}
As before, the desired limiting distribution for $\sqrt{n}(\hat{\theta}_n^{MPLE}-1)$ under $\P_{1,Q_n}$ then follows on taking $h=0$.
\\

For proving \eqref{eq:claim5}, use part (a) 
to conclude that the measures $\P_{1,Q_n}$ and $\P_{\theta_n,Q_n}$ are mutually contiguous  (as in the proof of Theorem \ref{test_low} part (b)), and so $\hat{\theta}_n^{MPLE}\stackrel{p}{\to}1$ under $\P_{\theta_n,Q_n}$ as well. Further,
using \eqref{PL_Optimizing}, under $\P_{\theta_n,Q_n}$ we have
\begin{align*}
&\sum_{i=1}^nt_i\tanh(\hat{\theta}_n^{MPLE} t_i)\\
    =&\sum_{i=1}^nt_i\Big[\tanh(t_i)+(\hat{\theta}_n^{MPLE}-1) t_i\text{sech}^2(t_i)+O_p\Big((\hat{\theta}_n^{MPLE}-1)^2|t_i|^3\Big)\Big].
\end{align*}
Under $\P_{\theta_n,Q_n}$, this gives
\begin{align}\label{eq:pl_crit}
    \hat{\theta}_n^{MPLE}-1=\frac{{\mathbf X}^T Q_n{\mathbf X}-\sum_{i=1}^nt_i\tanh(t_i)}{\sum_{i=1}^nt_i^2\text{sech}^2(t_i)+O_p((\hat{\theta}_n^{MPLE}-1)\sum_{i=1}^nt_i^4)}.
\end{align}
For analyzing the numerator in \eqref{eq:pl_crit}, under $\P_{\theta_n,Q_n}$ we have
\begin{align*}
   & {\mathbf X}^T Q_n{\mathbf X}-\sum_{i=1}^nt_i\tanh(t_i)\\
    =&n\bar{\mathbf X}^2+{\mathbf X}^TB_n{\mathbf X}-n\bar{\mathbf X}\tanh(\bar{\mathbf X})-\sum_{i=1}^n(t_i-\bar{\mathbf X})^2+O_p(\sum_{i=1}^n|t_i-\bar{\mathbf X}|^3)\\
    =&\frac{n\bar{\mathbf X}^4}{3}+{\mathbf X}^TB_n{\bf X}-{\mathbf X}^TB_n^2{\mathbf X}+O_p\left(n \sqrt{\frac{\log n}{n}}^3\right)+O_p\Big( n^{1-\frac{6}{4}}\Big)\\
    =&\frac{n\bar{\mathbf X}^4}{3}+{\mathbf X}^TB_n{\bf X}-{\mathbf X}^TB_n^2{\mathbf X}+o_p(1),
\end{align*}
where in the last but one step we use Lemma \ref{lem:mean} part (b) and \cite[Lemma 2.4]{deb2020fluctuations}, along with mutual contiguity shown above, to get that under $\P_{\theta_n,Q_n}$ we have $$|\bar{\mathbf X}|=O_p(n^{-1/4}),\quad \max_{i\in [n]}|t_i-\bar{\mathbf X}|=O_p\Big(\sqrt{\frac{\log n}{n}}\Big).$$ For the denominator in \eqref{eq:pl_crit}, under $\P_{\theta_n,Q_n}$ we have
\begin{align*}
   &\sum_{i=1}^nt_i^2\text{sech}^2(t_i)+O_p((\hat{\theta}_n^{MPLE}-1)\sum_{i=1}^nt_i^4)\\
   =&\sum_{i=1}^nt_i^2+O_p(\sum_{i=1}^nt_i^4)\\
   =&n\bar{\mathbf X}^2+\sum_{i=1}^n(t_i-\bar{\mathbf X})^2+O_p\Big( \sum_{i=1}^n(t_i-\bar{\mathbf X})^4+n\bar{\mathbf X}^4\Big)\\
   =&n\bar{\mathbf X}^2+O_p(1),
\end{align*}
where the last step uses Lemma \ref{lem:mean} part (a), and mutual contiguity. Combining the above two displays along with \eqref{eq:pl_crit}, under $\P_{\theta_n,Q_n}$ we get
\begin{align*}
\sqrt{n}(\hat{\theta}_n^{MPLE}-1)\stackrel{d}=&\frac{\frac{n\bar{\mathbf X}^4}{3}+{\mathbf X}^TB_n{\mathbf X}-\mathbf{X}^T B_n^2 {\mathbf X}+o_p(1)}{\sqrt{n}\bar{\mathbf X}^2+o_p(1)}\\
\stackrel{d}{\to}& \frac{\frac{U_h^4}{3}+S_1-T_1}{U_h^2},
\end{align*}
where we again use part (b) of Lemma \ref{lem:mean}. Recalling the formula of $V_h$ we have verified \eqref{eq:claim5}, and this completes part (b).

\end{itemize}

\textbf{\textit{Part (c):}}

\begin{itemize}
    \item{\bf MS Test}

    Using symmetry of the distribution of $\bar{\mathbf X}$ we have
    \begin{align*}
    \alpha=\P_{1,Q_n}(n\bar{\mathbf X}^2>K_n(\alpha))=2\P_{1,Q_n}(\bar{\mathbf X}>\sqrt{K_n(\alpha)}).
        \end{align*}
  Using this, along with the limit distribution  $n^{1/4}\bar{\mathbf X}\stackrel{d}{\to}U_0$ under $\P_{1,Q_n}$  (see part (b) of Lemma \ref{lem:mean}) gives
    $$\sqrt{K_n(\alpha)}=n^{1/4}\Psi_{\mathbb{H}_0}(1-\alpha/2)+o(n^{1/4}).$$
    Then, setting $\theta_n=1+\frac{h}{\sqrt{n}}$, the asymptotic power is given by
    \begin{align*}
        2\P_{\theta_n}(\sqrt{n}\bar{\mathbf X}>K_n(\alpha))=&2\P_{\theta_n,Q_n}\Big(n^{1/4}\bar{\mathbf X}>\Psi_{\mathbb{H}_0}(1-\alpha/2)+o(1)\Big)\\
        =&2\P\Big(U_h>\Psi_{\mathbb{H}_0}(1-\alpha/2)\Big)+o(1),
    \end{align*}
    where the last line again uses part (b) of Lemma \ref{lem:mean}.
    Thus we have $\beta_{MS}=2\P\Big(U_h>\Psi_{\mathbb{H}_0}(1-\alpha/2)\Big)$, as desired.

    \item{\bf NP Test}
    
    As in the proof of Theorem \ref{test_low}, we have ${\mathbf X}^T Q_n{\mathbf X}=n\bar{\mathbf X}^2+O_p(1)$ under both $\P_{1,Q_n}$ and $\P_{\theta_n,Q_n}$, using \eqref{eq:all_regimes} and 
    mutual contiguity. Thus we get $\beta_{NP}=\beta_{MS}$ as before.

\item{\bf PL Test}

Using \eqref{eq:claim5}, under $\P_{1,Q_n}$ we have 
$$\sqrt{n}(\hat{\theta}_n^{MPLE}-1)\stackrel{d}{\to}V_{0}, \text{ which gives }K_n(\alpha)=1+\frac{\Psi_{V_{0}}(\alpha)}{\sqrt{n}}+o\Big(\frac{1}{\sqrt{n}}\Big).$$
Then the asymptotic power is given by
\begin{align*}
    \P_{\theta_n,Q_n}(\hat{\theta}_n>K_n(\alpha))=\P(V_h>\Psi_{V_{0}}(\alpha))+o(1),
\end{align*}
where the last step again uses \eqref{eq:claim5}. This shows that $\beta_{PL}=\P(V_h>\Psi_{V_{0}}(\alpha))$, as desired.

\end{itemize}

   \subsection{Proof of Proposition \ref{prop_exist}}
 
%
%
%
%
%
%
%
%
%
%
%

\begin{enumerate}
   \item[(a)]
By definition of $a_n, b_n$ we have
$$\lim_{\theta\to -\infty}\E_{\theta}{\mathbf X}'Q_n{\mathbf X}=a_n,\quad \lim_{\theta\to \infty}\E_{\theta}{\mathbf X}^T Q_n{\mathbf X}=b_n.$$
Since the function $$\theta\mapsto \frac{\theta}{2}{\mathbf X}^T Q_n{\mathbf X}-Z_n(\theta,Q_n)$$
is strictly concave, it follows that there exists a unique MLE in $\R$ iff the equation
$${\mathbf X}'Q_n{\mathbf X}=\E_{\theta}{\mathbf X}'Q_n{\mathbf X}$$
has a real solution, which holds iff
$ a_n<{\mathbf X}^T Q_n{\mathbf X}<b_n$, as desired.

\item[(b)]
This is immediate from part (a), and on noting that ${\mathbf X}^T Q_n{\mathbf X}$ has a continuous limiting distribution in all regimes, as shown in the proofs above (in particular, see \eqref{eq:claim2} and \eqref{eq:crit_m2} for domains $\Theta_1$ and $\Theta_2$ respectively).

 \item[(c)]
 Using \eqref{PL_Optimizing}, the existence of MPLE is equivalent to the existence of a real valued root of the equation
\begin{align}\label{eq:pl_again} {\mathbf X}^T Q_n{\mathbf X}=\sum_{i=1}^nt_i\tanh(\theta t_i).
\end{align}
 Taking limits as $\theta\to\pm\infty$, we get
$$\lim_{\theta\to-\infty}\sum_{i=1}^nt_i\tanh(\theta t_i)=-\sum_{i=1}^n|t_i|,\quad \lim_{\theta\to\infty}\sum_{i=1}^nt_i\tanh(\theta t_i)=\sum_{i=1}^n|t_i|.$$
Thus the existence of MPLE holds iff
$$-\sum_{i=1}^n|t_i|<{\mathbf X}^T Q_n{\mathbf X}<\sum_{i=1}^n|t_i|.$$
Suppose ${\mathbf X}^T Q_n{\mathbf X}=\sum_{i=1}^nt_i$. This happens iff $X_i=1$ for all $i\in S({\mathbf X})$. Similarly we have
$${\mathbf X}^T Q_n{\mathbf X}=-\sum_{i=1}^nt_i\Leftrightarrow X_i=-1\text{ for all }i\in S({\mathbf X}).$$
The conclusion of part (c) follows from this.

 \item[(d)]
 By symmetry, we only show that
 $$\P_{\theta_0,Q_n}(X_i=1, i\in S)\to 0.$$

 To this end, note that by mutual contiguity it suffices to show the result under the Curie-Weiss model.  To this effect, we first claim that for any positive sequence $\{\varepsilon_n\}_{n\ge 1}$ converging to $0$ and constant $C>0$ free of $n$, we have
 \begin{align}\label{eq:claim}
 \lim_{n\to\infty}\sup_{{\bf a}\in \R^n/\{0\}: \|{\bf a}\|_\infty\le \varepsilon_n  \|{\bf a}\|_2} \P(\sum_{i=1}^n a_i \xi_i=0)= 0,
 \end{align}
 where $\{\xi_i\}_{1\le i\le n}$ are iid random variables such that $\Var( \xi_1)\ne 0, \E |\xi_1|^3\le C$.
 
 Given this claim, we first complete the proof of part (d). Let $\phi_n$ be the auxiliary variable introduced in Proposition . Then we have
 \begin{align*}
     \P_{\theta_0,{\rm CW}}(t_i=0\text{ for some i}, 1\le i\le n)=\E \P_{\theta_0,{\rm CW}}(t_i=0\text{ for some i}, 1\le i\le n|\phi_n).
 \end{align*}
Given $\phi_n$, $t_i$ is a weighted sum of iid random random variables, with
 $\E(|X_1|^3|\phi_n)\le 1.$
 Also, we have
 $$|Q_n(i,j)|\le \frac{C_w}{n}, \quad \sqrt{\sum_{j=1}^nQ_n(i,j)^2}=\frac{1}{\sqrt{d_n}},$$
 and so we can take $C=1, \varepsilon_n=\frac{C'}{\sqrt{n}}$ for some suitable constant $C'$ free of $n$. Thus we have $$ \P_{\theta_0,{\rm CW}}(t_i=0\text{ for some i}, 1\le i\le n|\phi_n)\stackrel{p}{\to}0\Rightarrow \lim_{n\to\infty}\P_{\theta_0,{\rm CW}}(t_i=0\text{ for some }i)=0.$$  To complete the proof, it suffices to show that $\P_{\theta_0,{\rm CW}}(\bar{\mathbf X}={\bf 1})\to 0$. But this is immediate from the weak law of $\bar{\mathbf X}$ derived in Lemma \ref{lem:mean} (and mutual contiguity of $\P_{\theta_0,Q_n}$ and $\P_{\theta_0,{\rm CW}}$).
\\

It thus remains to verify the claim. But this follows on setting $\mu_n:=\E\xi_1, \tau_n^2=Var(\xi_1)$ and noting that
$$\P(\sum_{i=1}^na_i\xi_i=0)=\P\Big(\frac{\sum_{i=1}^na_i(\xi_i-\mu_n)}{\tau_n \|{\bf a\|_2^2}}=-\frac{\mu_n\sum_{i=1}^na_i}{\tau_n\|{\bf a}\|_2}\Big),$$
and the fact that 
$$\frac{\sum_{i=1}^na_i(\xi_i-\mu_n)}{\tau_n \|{\bf a}\|_2^2}\stackrel{d}{\to}N(0,1)$$
by the Lyapunov CLT.
    
\end{enumerate}

\begin{acks}[Acknowledgments]
We thank Nabarun Deb and Rajarshi Mukherjee for helpful comments throughout this project. We also thank Richard Nickl for suggesting this problem. The presentation of the paper greatly benefitted from the suggestions of an anonymous referee, and the Associate Editor.
\end{acks}

\begin{funding}
The second author was supported in part by NSF (DMS-2113414).
\end{funding}

\bibliographystyle{imsart-nameyear.bst}
\bibliography{main_sumit.bib}

\section{Appendix}\label{sec:appen}

The appendix is organized as follows:  In section \ref{sec:A} we prove some general results. In section \ref{sec:B} we use the results from \ref{sec:A} to prove two auxiliary lemmas. Finally in section \ref{sec:C} we use the two auxiliary results from section \ref{sec:B} to verify Lemma \ref{lem:mean} and Lemma \ref{lem:normalizing_ising}.

\subsection{Proofs of Independent Results}\label{sec:A}

\subsubsection{Limit distribution for IID quadratic forms}

We first state a proposition connecting eigenvalues of the matrix $Q_n$ and eigenvalues of the limiting graphon $f$.

\begin{ppn}\label{ppn:graphon}
Let $\{Q_{n}\}_{n=1}^{\infty}$ be a sequence of matrices satisfying \eqref{RIC}, \eqref{CWC}, \eqref{eq:frob} and \eqref{eq:cut} for some $C_W, \kappa\in (0,\infty)$ and $f\in \mathcal{W}$. Let $\{\lambda_{j,n}\}_{j=1}^{n}$ denote the eigenvalues of $Q_n$ arranged in decreasing order of absolute value, and let  
$\{\lambda_{j}\}_{j\geq1}$ be the eigenvalues of the operator $T_f$  as defined in section \ref{sec:graphon}. Then the following conclusions hold:
\begin{enumerate}
    \item[(a)]
    \begin{equation*}
        \sum\limits_{j=1}^{\infty}\lambda_{j}^{2}=\int_{[0,1]^{2}}f(x,y)^{2}dxdy=||f||_{2}^{2}<\infty;
    \end{equation*}
    \item[(b)] For any $j\in\N$, 
    \begin{equation*}
        \lim\limits_{n\rightarrow\infty}\lambda_{j,n}=\lambda_{j}.
    \end{equation*}

    \item[(c)] For any $i\geq3$ we have
   $$\lim\limits_{n\rightarrow\infty}\sum\limits_{j=2}^{n}\lambda_{j,n}^i = \sum\limits_{j=2}^{\infty}\lambda_{j}^{i}.$$
\end{enumerate}
\end{ppn}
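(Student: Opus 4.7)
The proof divides naturally into three parts. For (a), I observe that $T_f$ is a self-adjoint Hilbert-Schmidt operator on $L^2[0,1]$ since $f \in L^2([0,1]^2)$ is symmetric and bounded. The spectral theorem then gives an $L^2$ expansion $f(x,y) = \sum_j \lambda_j \phi_j(x)\phi_j(y)$ for an orthonormal basis $\{\phi_j\}$ of eigenfunctions; Parseval's identity yields $\|f\|_2^2 = \sum_j \lambda_j^2$. The finiteness $\|f\|_2^2 \le C_w^2$ follows from $\|f\|_\infty \le C_w$.

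For (b), the first step is to identify the spectrum of the matrix $Q_n$ with the spectrum of the step-graphon operator $T_{f^{nQ_n}}$: if $v \in \R^n$ is an eigenvector of $Q_n$ with eigenvalue $\mu$, then the step function $g(x) := v_{\lceil nx \rceil}$ satisfies $T_{f^{nQ_n}}(g)(x) = \mu\, g(x)$, so that the non-zero eigenvalues of $T_{f^{nQ_n}}$ are precisely $\{\lambda_{j,n}\}$. With this identification in hand, (b) reduces to the eigenvalue continuity theorem for graphons under cut-metric convergence, a standard result (e.g.~Theorem 11.54 of Lovász).

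For (c), I would use the cycle identity $t(C_i, g) = \sum_j \mu_j^i$ valid for cycles $C_i$ and self-adjoint Hilbert-Schmidt operators $T_g$ with eigenvalues $\{\mu_j\}$. Applied to $g = f^{nQ_n}$ and $g = f$, this gives $\sum_{j=1}^n \lambda_{j,n}^i = t(C_i, f^{nQ_n})$ and $\sum_{j=1}^\infty \lambda_j^i = t(C_i, f)$. For $i \ge 3$ the cycle $C_i$ is a simple graph, so its homomorphism density is continuous in cut metric, giving $\sum_{j=1}^n \lambda_{j,n}^i \to \sum_{j=1}^\infty \lambda_j^i$. Since $\lambda_{1,n} = \lambda_1 = 1$ by \eqref{RIC} and the discussion following \eqref{eq:cut}, subtracting the $j=1$ term yields the stated identity. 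Absolute convergence of the infinite right-hand side is automatic from $|\lambda_j| \le C_w$ together with $\sum \lambda_j^2 < \infty$ from (a), via $\sum |\lambda_j|^i \le C_w^{i-2}\sum \lambda_j^2$.

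The main subtlety is in (b): individual eigenvalue convergence does not follow from cut-metric convergence by elementary functional analysis alone, since the cut norm is strictly weaker than any Schatten norm on operators. One genuinely needs the spectral-continuity result from graphon theory. Parts (a) and (c) are then mostly bookkeeping once the key graphon/operator dictionary ($\lambda_{j,n} \leftrightarrow$ eigenvalues of $T_{f^{nQ_n}}$, and $t(C_i,\cdot) = \sum \lambda_j^i$) is in place.
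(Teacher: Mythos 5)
Your argument is correct and takes essentially the same route as the paper, which simply cites \cite[Chapter 7.5]{lovasz2012large} for part (a) and \cite[Theorem 11.54]{lovasz2012large} for parts (b) and (c); you have unpacked those citations into the underlying spectral-theorem/Parseval argument, the operator identification $\lambda_{j,n}\leftrightarrow\text{spec}(T_{f^{nQ_n}})$, and the cycle homomorphism density identity $t(C_i,g)=\sum_j\mu_j^i$ (with the observation that $C_i$ is simple only for $i\ge 3$, which is exactly why that restriction appears). The bookkeeping step of subtracting the $j=1$ term using $\lambda_{1,n}=\lambda_1=1$ from \eqref{RIC} is also the intended reduction.
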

\begin{proof}
The proof of Proposition \ref{ppn:graphon} part (a) follows \cite[Chapter 7.5]{lovasz2012large}, whereas part (b) and (c) follow from \cite[Theorem 11.54]{lovasz2012large}.
\end{proof}

{Utilizing Proposition \ref{ppn:graphon}, we now characterize the limit distribution of quadratic forms of IID random variables, which is a crucial result in our analysis, and is of possible independent interest. For interested readers, we refer to \cite[Theorem 1.4]{bhattacharya2017universal} and \cite[Theorem 1.4]{bhattacharya2022asymptotic} for results with a similar flavor.}
 
\begin{lem}\label{lem:comparison}

Suppose ${\mathbf Z}:=(Z_i)_{1\le i\le n}$ are IID random variables with mean $0$ and variance $\tau_n^2$ which converges to $\tau^2\in (0,\infty)$. Assume that the matrix $Q_n$ satisfies \eqref{RIC}, \eqref{CWC}, \eqref{eq:frob} and \eqref{eq:cut} for some $C_W, \kappa\in (0,\infty)$ and $f\in \mathcal{W}$. 
Then with $B_n=Q_n-\frac{1}{n}{\bf 1}{\bf 1}^T$ we have
$$[\sqrt{n}\bar{\mathbf Z}, {\mathbf Z}^TB_n{\mathbf Z}, \mathbf {Z}^TB_n^2{\mathbf Z}]\stackrel{d}{\to}\Big[\tau W_0, \tau^2\Big(\sum_{j=2}^\infty \lambda_j(Y_j-1)-1+W^*\Big),\tau^2 \Big(\sum_{j=2}^\infty \lambda_j^2 Y_j+\kappa\Big)\Big],$$
where
$$ W_0\sim N(0,1),\quad  W^*\sim N(0,2\kappa), \quad \{Y_j\}_{j\ge 2}\stackrel{iid}{\sim}\chi_1^2$$
are mutually independent. Here the infinite sums in the limiting distributions converge in $L_2$.

\end{lem}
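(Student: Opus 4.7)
The plan is to pass to spectral coordinates of $\mathbf{Z}$ relative to $Q_n$ and then combine a finite-dimensional CLT for the top eigendirections with a quadratic-form CLT for the residual tail. Diagonalize $Q_n=\sum_{j=1}^n\lambda_{j,n}v_{j,n}v_{j,n}^T$; by \eqref{RIC} and Perron-Frobenius we may take $v_{1,n}=n^{-1/2}\mathbf{1}$ with $\lambda_{1,n}=1$, so $B_n$ has the same eigenvectors but with the top eigenvalue replaced by $0$. Setting $\xi_{j,n}:=v_{j,n}^T\mathbf{Z}$ yields the clean identities $\sqrt{n}\bar{\mathbf{Z}}=\xi_{1,n}$, $\mathbf{Z}^T B_n\mathbf{Z}=\sum_{j\ge 2}\lambda_{j,n}\xi_{j,n}^2$, and $\mathbf{Z}^T B_n^2\mathbf{Z}=\sum_{j\ge 2}\lambda_{j,n}^2\xi_{j,n}^2$, so everything reduces to the joint asymptotic behavior of the scalar projections $\{\xi_{j,n}\}_{j\ge 1}$.

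Fix a truncation level $K$ and split each quadratic form into a head $(2\le j\le K)$ and a tail $(j>K)$. For the head I would apply a multivariate Lindeberg CLT to $(\xi_{1,n},\ldots,\xi_{K,n})$, obtaining joint convergence to $\tau(W_0,G_2,\ldots,G_K)$ with iid $N(0,1)$ components (since $V_n^T V_n=I_K$ by orthonormality). Lindeberg negligibility comes from the eigenequation $v_{j,n}(i)=\lambda_{j,n}^{-1}\sum_\ell Q_n(i,\ell)v_{j,n}(\ell)$: paired with $\max Q_n(i,\ell)\le C_w/n$ and $\|v_{j,n}\|_2=1$, it yields $\|v_{j,n}\|_\infty=O(|\lambda_{j,n}|^{-1}n^{-1/2})$ whenever $\lambda_j\ne 0$ (directions with $\lambda_j=0$ drop out because $\lambda_{j,n}\to 0$ kills their weight in the quadratic forms). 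Continuous mapping then yields head convergence to $\tau^2\sum_{j=2}^K\lambda_j Y_j$ and $\tau^2\sum_{j=2}^K\lambda_j^2 Y_j$, with $Y_j:=G_j^2\sim\chi_1^2$.

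For the tail, set $B_{n,K}:=\sum_{j>K}\lambda_{j,n}v_{j,n}v_{j,n}^T$ and $R_K^{(n)}:=\mathbf{Z}^T B_{n,K}\mathbf{Z}-\tau_n^2\operatorname{tr}(B_{n,K})$. A cumulant/moment calculation together with the identities $\operatorname{tr}(B_{n,K}^2)=\sum_{j>K}\lambda_{j,n}^2\to\gamma^2-1-\sum_{j=2}^K\lambda_j^2$ (from $\|Q_n\|_F^2\to\gamma^2$ and Proposition \ref{ppn:graphon}(b)) and $\sum_{j>K}\lambda_{j,n}^i\to\sum_{j>K}\lambda_j^i$ for $i\ge 3$ (Proposition \ref{ppn:graphon}(c)) shows that $R_K^{(n)}\stackrel{d}{\to}\tau^2\sum_{j>K}\lambda_j(Y_j-1)+\tau^2 W_K^*$, where $W_K^*\sim N(0,2\kappa)$ is independent of the head and of $\{Y_j\}_{j>K}$. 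The variance $2\kappa$ arises as precisely the Frobenius-squared mass of $B_{n,K}$ not absorbed by the discrete limit eigenvalues: $(\gamma^2-1-\sum_{j=2}^K\lambda_j^2)-\sum_{j>K}\lambda_j^2=\gamma^2-\|f\|_2^2=\kappa$ (using Proposition \ref{ppn:graphon}(a)). Combining head and tail, using $\tau_n^2\operatorname{tr}(B_n)\to -\tau^2$ and sending $K\to\infty$, reproduces the claimed limit for $\mathbf{Z}^T B_n\mathbf{Z}$. The analysis for $\mathbf{Z}^T B_n^2\mathbf{Z}$ is simpler: its tail variance $2\tau_n^4\sum_{j>K}\lambda_{j,n}^4$ vanishes (again by Proposition \ref{ppn:graphon}(c)), so the tail concentrates deterministically on $\tau^2\kappa$ and the limit follows.

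\textbf{Main obstacle.} The most delicate step is the quadratic-form CLT for $R_K^{(n)}$ together with disentangling its discrete-spectrum contribution from the Gaussian contribution: one must check that the "continuum" portion of the Frobenius norm—i.e.\ the excess $\kappa=\gamma^2-\|f\|_2^2$ not captured by the limit spectrum of $T_f$—becomes an independent $N(0,2\kappa)$, rather than an additional weighted $\chi^2$ series. This is exactly where the assumption $\|Q_n\|_F^2\to\gamma^2$ matters. Swapping the $K\to\infty$ and $n\to\infty$ limits is controlled by the summability $\sum_{j\ge 2}\lambda_j^2\le\|f\|_2^2<\infty$ (Proposition \ref{ppn:graphon}(a)), which also gives $L^2$-convergence of the two infinite series appearing in the statement. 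Joint independence of $W_0$, $\{Y_j\}_{j\ge 2}$, and $W^*$ in the limit falls out because the three statistics are built from orthogonal projections of $\mathbf{Z}$, which are asymptotically jointly Gaussian.
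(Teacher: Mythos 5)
Your approach (spectral truncation, Lindeberg CLT for the top-$K$ projections, quadratic-form CLT for the tail, then $K\to\infty$) is genuinely different from the paper's, which proves the result first for IID $N(0,1)$ by rotational invariance and a direct computation of the joint moment generating function, and then transfers it to general $Z_i$ by a Lindeberg-type \emph{moment comparison} showing that $\E(\sqrt{n}\bar{\mathbf Z})^a(\mathbf{Z}^TB_n\mathbf{Z})^b(\mathbf{Z}^TB_n^2\mathbf{Z})^c$ differs from the Gaussian counterpart by $O(1/n)$. That design deliberately avoids ever having to prove a quadratic-form CLT in the general setting: in the Gaussian case the limiting distribution is read off exactly from the MGF, and the universality step needs only matching of the first two moments of $Z_1$ plus $|B_n(i,j)|=O(1/n)$.

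There is, however, a real gap in your proposal, and it sits precisely at the point you flag as the ``main obstacle.'' The convergence
$R_K^{(n)}\stackrel{d}{\to}\tau^2\sum_{j>K}\lambda_j(Y_j-1)+\tau^2 W_K^*$
is \emph{not} a standard quadratic-form CLT: the limit is a mixture of a weighted chi-squared series (coming from the surviving nonzero $\lambda_j$ with $j>K$) and a Gaussian (coming from the vanishing part of the spectrum carrying Frobenius mass $\kappa$). A ``cumulant/moment calculation'' that produces this mixed limit is essentially the original lemma again, applied to $B_{n,K}$ in place of $B_n$; pushing the difficulty into the tail does not resolve it. Relatedly, your claim that joint independence of $W_0$, the head $\{Y_j\}_{2\le j\le K}$, $W^*$, and the tail $\{Y_j\}_{j>K}$ ``falls out because the three statistics are built from orthogonal projections of $\mathbf{Z}$'' conflates orthogonality (hence uncorrelatedness) with independence: for non-Gaussian $\mathbf{Z}$ the projections $\xi_{j,n}$ are not independent in finite $n$, and asymptotic independence has to be established via a \emph{joint} limit of $(\xi_{1,n},\ldots,\xi_{K,n},R_K^{(n)},S_K^{(n)})$, not from componentwise limits plus orthogonality. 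Finally, the delocalization bound $\|v_{j,n}\|_\infty=O(|\lambda_{j,n}|^{-1}n^{-1/2})$ is fine for $j$ with $\lambda_j\ne 0$, but your dismissal of the $\lambda_j=0$ directions is informal — they are swept into the tail, which is exactly the unproven part. Completing your route would require either an independent proof of the mixed-spectrum quadratic-form CLT or a reduction to the Gaussian case akin to what the paper already does.
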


\begin{proof}
The proof of Lemma \ref{lem:comparison} will be completed, once we can show the following two steps:

\begin{enumerate}
    \item[(a)] Suppose $\{R_i\}_{1\le i\le n}$ are IID $N(0,1)$. Then the desired conclusion holds.
    
    \item[(b)]
    For any positive integers $a,b,c$ we have
    $$\E \Big(\sqrt{n}\bar{\mathbf Z}\Big)^a \Big({\mathbf Z}^TB_n{\mathbf Z}\Big)^b\Big({\mathbf Z}^TB_n^2{\mathbf Z}\Big)^c-\E \Big(\sqrt{n}\bar{\mathbf R}\Big)^a \Big({\mathbf R}^TB_n{\mathbf R}\Big)^b\Big({\mathbf R}^TB_n^2{\mathbf R}\Big)^c\to 0.$$
\end{enumerate}

\begin{proof}[Proof of (a)]
Let $$Q_n=P^T\Lambda P=\sum_{i=1}^n\lambda_{i,n}{\bf p}_i{\bf p}_i^T$$ be the spectral decomposition of $Q_n$, where the eigenvalues $\{\lambda_{i,n}\}_{1\le i\le n}$ are arranged in decreasing order of absolute values. Thus we have $\lambda_{1,n}=1$, and ${\bf p}_1=\frac{1}{\sqrt{n}}{\bf 1}.$ Then setting $\widetilde{\mathbf R}:=P{\mathbf R}$ we have
$$\left[\sqrt{n}\bar{\mathbf R}, {\mathbf R}^TB_n{\mathbf R},{\mathbf R}^TB^2_n{\mathbf R}\right]=\left[\widetilde{R}_1,\sum_{i=2}^n\lambda_{i,n}\widetilde{R}_i^2, \sum_{i=2}^n\lambda^2_{i,n}\widetilde{R}_i^2 \right].$$
Since $\widetilde{\mathbf{R}}\stackrel{d}{=}\mathbf{R}$, it suffices to find the limiting distribution of
$$\left[{R}_1,\sum_{i=2}^n\lambda_{i,n}{R}_i^2, \sum_{i=2}^n\lambda^2_{i,n}{R}_i^2 \right]. $$
Clearly, $R_1$ is independent of the other two random variables, and has a $N(0,1)$ distribution. It thus suffices to focus on the joint distribution of the other two random variables. To this effect, for any $s,t$ with $\max(|s|,|t|)\le \frac{1}{8}$ we have
\begin{align}\label{eq:trivial0}
   \notag &\log \E \exp\Big\{s\sum_{j=2}^n\lambda_{j,n}{R}_j^2+t \sum_{j=2}^n\lambda^2_{j,n}{R}_i^2  \Big\}\\
   \notag=&-\frac{1}{2}\sum_{j=2}^n\log \Big[1-2(\lambda_{j,n}s+\lambda_{j,n}^2t)\Big]\\
   \notag =&\frac{1}{2}\sum_{j=2}^n\sum_{i=1}^\infty \frac{2^i(\lambda_{j,n}s+\lambda_{j,n}^2t)^i}{i}\\
   \notag =&\Big[-s+t \sum_{j=2}^n\lambda_{j,n}^2 \Big]+\frac{1}{2}\sum_{j=2}^n\sum_{i=2}^\infty \frac{2^i(\lambda_{j,n}s+\lambda_{j,n}^2t)^i}{i}\\
    =&\Big[-s+t \sum_{j=2}^n\lambda_{j,n}^2 \Big]+\frac{1}{2}\sum_{i=2}^\infty\sum_{j=2}^n \frac{2^i(\lambda_{j,n}s+\lambda_{j,n}^2t)^i}{i},
    \end{align}
    where the last line uses Fubini's theorem, along with the trivial bound
\begin{align}\label{eq:trivial}2^i(\lambda_{j,n}s+\lambda_{j,n}^2t)^i\le 4^i |\lambda_{j,n}|^i 8^{-i}\le 2^{-i} \lambda_{j,n}^2.  
\end{align}
For every fixed $i\ge 2$,
uses Proposition \ref{ppn:graphon} part (b) we have
$$\sum_{j=2}^n(\lambda_{j,n}s+\lambda_{j,n}^2t)^i\to\sum_{j=2}^\infty(\lambda_{j}s+\lambda_{j}^2t)^i.$$
Also, using \eqref{eq:trivial} we have 
$$\sum_{j=2}^n \frac{2^i(\lambda_{j,n}s+\lambda_{j,n}^2t)^i}{i}\le 2^{-i} \sum_{j=2}^n\lambda_{j,n}^2,$$
where 
$$\sum_{j=2}^n\lambda_{j,n}^2\to \sum_{j=2}^\infty \lambda_j^2<\infty$$
by Proposition \ref{ppn:graphon} part (c). Combining the last three displays along with dominated convergence theorem, the RHS of \eqref{eq:trivial0} converges to
    \begin{align*}
    \Big[-s+t \sum_{j=2}^\infty\lambda_{j}^2 \Big]+\frac{1}{2}\sum_{j=2}^\infty\sum_{i=2}^\infty \frac{2^i(\lambda_{j}s+\lambda_{j}^2t)^i}{i}.
\end{align*}
This is the log moment generating function of $$(\sum\limits_{j=2}^{\infty}\lambda_{j}(Y_{j}-1)-1+W^*,\sum\limits_{j=2}^{\infty}\lambda_{j}^{2}Y_{j}+\kappa).$$
The random variables in the RHS above converge in $L_2$, as 
\begin{align*}
\E \Big[\sum_{j=k+1}^\infty \lambda_j(Y_j-1)\Big]^2=&2\sum_{j=k+1}^\infty \lambda_j^2\stackrel{k\to\infty}{\to}0, \\
\E \Big[\sum_{j=k+1}^\infty \lambda_j^2 Y_j\Big]^2\le &3\sum_{j=k+1}^\infty \lambda_j^4+\Big(\sum_{j=k+1}^\infty \lambda_{j}^2\Big)^2\stackrel{k\to\infty}{\to}0.
\end{align*}
The convergence in the above display uses Proposition \ref{ppn:graphon} part (a). The proof of part (a) is complete. 
\end{proof}

\begin{proof}[Proof of (b)]
To begin, use \eqref{CWC} to note that
$|B_n(i,j)|\le \frac{C_W}{n}$, and
$$|B_n^2(i,j)|\le \sum_{k=1}^n|B_n(i,k)B_n(k,j)|\le \frac{C_W^2}{n}.$$
Set $r:=\frac{a}{2}+b+c$, and let $\mathcal{S}(\ell,2r)$ denote the set of all positive integer solutions to the equation $\sum_{i=1}^\ell\alpha_i=2r$. Then  we have
\begin{align}\label{eq:graph_bound}
 \notag  &\Big| \E(\sqrt{n}{\mathbf Z})^a({\mathbf Z}^TB_n{\mathbf Z})^b ({\mathbf Z}^TB_n^2{\mathbf Z})^c-\E(\sqrt{n}{\mathbf R})^a({\mathbf R}^TB_n{\mathbf R})^b ({\mathbf R}^TB_n^2{\mathbf R})^c\Big|\\
   \le& n^{-r} C_W^{b+2c}\sum_{\ell=1}^{2r} n^{\ell} \sum_{{\bm \alpha}\in \mathcal{S}(\ell,2r)} \Big|\E \prod_{i=1}^\ell Z_i^{\alpha_i}-\E \prod_{i=1}^\ell R_i^{\alpha_i}\Big|.
\end{align}
To bound the RHS of \eqref{eq:graph_bound}, we consider the following cases based on ${\bm \alpha}$

\begin{itemize}
    \item {There exists $i\in [\ell]$ such that $\alpha_i=1$}
    
    In this case we have
    $$\E \prod_{i=1}^\ell Z_i^{\alpha_i}=\E \prod_{i=1}^\ell R_i^{\alpha_i}=0.$$

    \item {$\ell>r$}
    
    In this case we claim that there exists $i\in [\ell ]$ such that $\alpha_i=1$. Thus this is a sub case of the above case.
    
    Suppose not.  Then we have
    $$2r=\sum_{i=1}^\ell \alpha_i\ge 2\ell,$$
    which is a contradiction.
    
    \item{$\ell=r, \alpha_i\ge 2\text{ for all }i\in [\ell]$}
    
    In this case we must have $\alpha_i=2$ for all $i$. If not, then we must have
    $$2r=\sum_{i=1}^\ell \alpha_i> 2\ell,$$
    a contradiction. Since $\E Z_i^2=\E R_i^2=1$, we have  $$\E \prod_{i=1}^\ell Z_i^{\alpha_i}=\E \prod_{i=1}^\ell R_i^{\alpha_i}=1.$$
    
\end{itemize}
Combining the cases above, the RHS of \eqref{eq:graph_bound} is bounded by
\begin{align*}  n^{-r} C_W^{b+2c}\sum_{\ell=1}^{r-1} n^{\ell} \sum_{{\bm \alpha}\in \mathcal{S}(\ell,2r)} \Big|\E \prod_{i=1}^\ell Z_i^{\alpha_i}-\E \prod_{i=1}^\ell R_i^{\alpha_i}\Big|=O\Big(\frac{1}{n}\Big), 
\end{align*}
and so the proof of part (b) is complete.
\end{proof}

\end{proof}

\subsubsection{Curie-Weiss model}\label{sec:ising}
As it turns out, our proof technique relies on a very precise understanding of what happens under the Curie-Weiss model $\P_{\theta, {\rm CW}}$, defined in \eqref{eq:cw}, and recalled here below: 
\begin{align*}
\P_{\theta, {\rm CW}}({\mathbf X}={\mathbf x})=\exp\Big( \frac{n\theta\bar{x}^2}{2}-Z_n(\theta,{\rm CW})\Big).
\end{align*}

The following proposition expresses the Curie-Weiss model as a mixture of IID laws. The same decomposition was also utilized previously in the literature (see \cite{deb2020fluctuations,mukherjee2018global}. We omit the proof.

\begin{ppn}\label{ppn_aux}\cite[Lemma 3]{mukherjee2018global}, \cite[Proposition 4.1]{deb2020fluctuations}
Given ${\mathbf X}\sim \P_{\theta, {\rm CW}}$, let $\phi_n$ be a real valued random variable defined by 
\begin{equation}\label{phi}
\phi_{n}\sim N\Big(\Bar{\mathbf{X}},\frac{1}{n\theta}\Big).
\end{equation}
Then the following conclusions hold:
\begin{enumerate}
    \item[(a)] Given $\phi_{n}$, the random variables $(X_1,\ldots,X_n)$ are IID, with 
    \begin{equation}
        \P_{\theta, {\rm CW}}(X_{j}=1|\phi_{n}) = \frac{\exp{(\theta \phi_{n})}}{\exp{(\theta \phi_{n})}+\exp{(-\theta \phi_{n})}}
    \end{equation}
    \item[(b)] The marginal density of $\phi_n$ has a density with respect to Lebesgue measure, which is proportional to 
    \begin{equation}
 f_{\theta,n}(\phi_{n})=\exp{\Big(-\frac{1}{2}n\theta \phi_{n}^{2}+n\log \cosh(\theta \phi_{n})\Big)}.
\end{equation}
\end{enumerate}
\end{ppn}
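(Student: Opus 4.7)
The plan is to use a Hubbard--Stratonovich style Gaussian integration identity to decouple the quadratic self-interaction $\frac{n\theta}{2}\bar{x}^{2}$ in the Curie--Weiss Hamiltonian. The key point is the elementary identity
$$\exp\Big(\tfrac{n\theta \bar{x}^{2}}{2}\Big)=\sqrt{\tfrac{n\theta}{2\pi}}\int_{\R}\exp\Big(-\tfrac{n\theta \phi^{2}}{2}+n\theta\phi\bar{x}\Big)d\phi,$$
which rewrites a quadratic in $\bar{x}$ as an average of linear tilts in $\phi$. I would use this identity to introduce $\phi$ as an auxiliary variable on an extended probability space, where the construction $\phi_n \mid \mathbf{X}\sim N(\bar{\mathbf{X}},(n\theta)^{-1})$ is exactly what is needed to make the calculation clean.

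Concretely, I would compute the joint density of $(\mathbf{X},\phi_n)$ with respect to counting measure$\times$Lebesgue measure. By definition of $\phi_n$ and \eqref{eq:cw}, this joint density is proportional to
$$\exp\Big(\tfrac{n\theta\bar{x}^{2}}{2}\Big)\cdot\exp\Big(-\tfrac{n\theta(\phi-\bar{x})^{2}}{2}\Big)=\exp\Big(-\tfrac{n\theta\phi^{2}}{2}\Big)\prod_{i=1}^{n}\exp(\theta\phi x_i),$$
where the two $\frac{n\theta\bar{x}^{2}}{2}$ terms cancel after expanding the square, and I used $n\bar{x}=\sum_i x_i$ to factor over coordinates. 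The right-hand side is a product in $i$, which is exactly what proves part (a): conditional on $\phi_n$, the coordinates $(X_1,\ldots,X_n)$ are independent with $\P(X_j=1\mid\phi_n)=e^{\theta\phi_n}/(e^{\theta\phi_n}+e^{-\theta\phi_n})$.

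For part (b) I would marginalize the same joint density over $\mathbf{x}\in\{-1,1\}^n$. The sum factorizes as $\prod_{i=1}^{n}(e^{\theta\phi}+e^{-\theta\phi})=(2\cosh(\theta\phi))^{n}$, and so the marginal density of $\phi_n$ is proportional to $\exp(-\tfrac{n\theta\phi^{2}}{2}+n\log\cosh(\theta\phi))$, which is the stated formula.

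There is essentially no obstacle here: the entire argument amounts to completing the square in $\phi$ and recognizing the resulting product structure. The only things to be careful about are (i) tracking normalizing constants consistently between the conditional and joint forms, and (ii) noting that the statement in part (b) is an identity up to proportionality rather than an exact density, so the normalization constant of $\phi_n$ (which involves the Curie--Weiss log-partition function $Z_n(\theta,\mathrm{CW})$) need not be computed explicitly.
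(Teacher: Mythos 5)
Your argument is correct: the joint-density computation via the Gaussian completion-of-the-square identity is exactly the Hubbard--Stratonovich decomposition, which is the same mechanism behind the proof in the cited references (\cite[Lemma 3]{mukherjee2018global}, \cite[Proposition 4.1]{deb2020fluctuations}), and the paper itself omits the proof precisely because it follows from those. Both the factorization yielding (a) and the marginalization over $\{-1,1\}^n$ yielding $(2\cosh(\theta\phi))^n$ for (b) are carried out correctly, with the $2^n$ and the Curie--Weiss partition function properly absorbed into the proportionality constant.
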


\begin{defn}\label{def:phi}
Let $F_{n,\theta}$ denote the distribution of $\phi_n$, as defined in Proposition \ref{ppn_aux}.
\end{defn}
We begin by proving a lemma about the distribution $F_{n,\theta}$. 

\begin{lem}\label{lem:phi1}
Fix $\theta_0>0, h\in \R$, and let $\theta_n=\theta_0+\frac{h}{\sqrt{n}}$ be as in definition \ref{def:thetan}. Let $\phi_n\sim F_{n,\theta_n}$, where $F_{n,\theta}$ is as in definition \ref{def:phi}. 
\begin{enumerate}
    \item[(a)]
    If $\theta_0\in \Theta_3$, then 
    $$\sqrt{n}\phi_n\to N\Big(0, \frac{1}{\theta_0-\theta_0^2}\Big)$$
    in distribution, and in moments.

     \item[(b)]
    If $\theta_0\in \Theta_2$, then 
    $$n^{1/4}\phi_n\to \H_h$$
    in distribution, and in moments, where $H_h$ is as defined in \eqref{H_h}.

  \item[(c)]
  If $\theta_0\in \Theta_1$, then conditional on $\phi_n>0$ we have
    $$\sqrt{n}(\phi_n-m(\theta_n))\to N\Big(0, \frac{1}{\theta_0-(1-m^2(\theta_0))\theta_0^2}\Big)$$
    in distribution, and in moments, where $m(.)$ is as in definition \ref{Three_domains}.
 
\end{enumerate}

\end{lem}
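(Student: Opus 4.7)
The plan is to treat Lemma \ref{lem:phi1} as a sequence of Laplace-type asymptotic analyses of the density $f_{n,\theta_n}(\phi)\propto\exp(ng_{\theta_n}(\phi))$, where I set $g_\theta(\phi):=-\theta\phi^2/2+\log\cosh(\theta\phi)$. Differentiating, $g_\theta'(\phi)=\theta(\tanh(\theta\phi)-\phi)$, so the critical points of $g_\theta$ are exactly the roots of $w(\theta,\cdot)=0$ from Proposition \ref{roots_domains}: the origin (always), and additionally $\pm m(\theta)$ when $\theta>1$. Moreover $g_\theta''(\phi)=-\theta+\theta^2\mathrm{sech}^2(\theta\phi)$, giving $g_{\theta_0}''(0)=\theta_0(\theta_0-1)$ and, at $\phi=m(\theta_0)$ where $\tanh(\theta_0 m(\theta_0))=m(\theta_0)$, $g_{\theta_0}''(m(\theta_0))=-\theta_0[1-\theta_0(1-m^2(\theta_0))]$, which is negative by part (a) of Proposition \ref{roots_domains}. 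A global tail bound $g_\theta(\phi)\le-\theta\phi^2/4$ for $|\phi|$ sufficiently large (using $\log\cosh(x)\le|x|$) is available uniformly for $\theta$ in a neighborhood of $\theta_0$, which controls the contribution of the tails both to the normalizing constant and to all moments of the rescaled variable.

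For part (a), where $\theta_0\in(0,1)$ makes $0$ the unique maximizer of $g_{\theta_0}$ with $g_{\theta_0}''(0)<0$, I would substitute $\phi=u/\sqrt{n}$. A Taylor expansion gives $ng_{\theta_n}(u/\sqrt{n})\to\tfrac{1}{2}g_{\theta_0}''(0)u^2=-\tfrac{\theta_0(1-\theta_0)}{2}u^2$ uniformly on compacts, so after normalizing, the density of $\sqrt{n}\phi_n$ converges pointwise to the $N(0,1/(\theta_0-\theta_0^2))$ density. Part (c) is analogous at the maximizer $m(\theta_n)$: by the symmetry $g_\theta(-\phi)=g_\theta(\phi)$, conditioning on $\phi_n>0$ restricts us to a single mode, and since $g_{\theta_n}'(m(\theta_n))=0$, a Taylor expansion yields $ng_{\theta_n}(m(\theta_n)+u/\sqrt{n})-ng_{\theta_n}(m(\theta_n))\to\tfrac{1}{2}g_{\theta_0}''(m(\theta_0))u^2$ (using the continuity of $m(\cdot)$ and $g_\theta''$), which gives the Gaussian limit with variance $-1/g_{\theta_0}''(m(\theta_0))=1/(\theta_0-(1-m^2(\theta_0))\theta_0^2)$.

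Part (b) is the delicate one, since $g_1''(0)=0$ makes the quadratic expansion degenerate, forcing a rescaling at the non-standard rate $\phi=u/n^{1/4}$. Using $\log\cosh(x)=x^2/2-x^4/12+O(x^6)$, one obtains
\begin{equation*}
g_{\theta_n}(\phi)=\tfrac{\theta_n(\theta_n-1)}{2}\phi^2-\tfrac{\theta_n^4}{12}\phi^4+O(\phi^6),
\end{equation*}
and with $\theta_n-1=h/\sqrt{n}$ and $\phi=u/n^{1/4}$,
\begin{equation*}
ng_{\theta_n}(u/n^{1/4})=\tfrac{\theta_n h\,u^2}{2}-\tfrac{\theta_n^4 u^4}{12}+O(n^{-1/2}u^6)\;\longrightarrow\;\tfrac{h u^2}{2}-\tfrac{u^4}{12},
\end{equation*}
uniformly on compacts in $u$. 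This identifies the pointwise limit density as $p_h(u)\propto\exp(hu^2/2-u^4/12)$, i.e.\ the density of $\mathbb{H}_h$. The main obstacle I expect is upgrading pointwise density convergence to convergence in moments: one needs to dominate the rescaled density uniformly in $n$ by an integrable function with finite moments of all orders. In parts (a) and (c) this is routine thanks to the quadratic local lower bound for $-g_{\theta_n}$ near its critical point combined with the global tail bound; in part (b) the quadratic term degenerates, so uniform control must come from the quartic term. My strategy would be to split into the region $|u|\le\epsilon n^{1/4}$, where a uniform quartic lower bound $-g_{\theta_n}(u/n^{1/4})\ge c u^4/n$ holds once $n$ is large, and the complementary region where the global exponential bound gives super-exponential decay. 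Combining these, Scheff\'e's lemma plus uniform integrability yields both convergence in distribution and in moments simultaneously for all three regimes.
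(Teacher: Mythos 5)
Your proposal is correct and mirrors the paper's own argument: a Laplace-type expansion of the density $\propto e^{ng_{\theta_n}(\phi)}$ around the relevant critical point of $g_\theta=-q_\theta$, at scale $n^{-1/2}$ in $\Theta_1,\Theta_3$ and $n^{-1/4}$ at criticality, together with an exponential tail estimate to upgrade pointwise density convergence to convergence in moments. One small inaccuracy to fix: in part (b), when $h>0$ the quadratic term $-\tfrac{\theta_n h}{2}u^2$ is negative near $u=0$, so the correct lower bound on $|u|\le\epsilon n^{1/4}$ is of the form $-ng_{\theta_n}(u/n^{1/4})\ge cu^4-C$ for constants $c,C>0$ depending on $h$; this additive constant does not affect the uniform integrability conclusion.
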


\begin{proof}

\begin{enumerate}
    \item[(a)] \textbf{High Temperature Regime} $\Theta_{3}$. 
    \\
    
    Use Proposition \ref{ppn_aux} to note that $\phi_{n}$ has a density proportional to 
    \begin{equation}
    f_{\theta_{n},n}(\phi)=\exp\{-nq_{\theta_n}(\phi)\},\quad  q_{\theta}(\phi)=\frac{1}{2}\theta\phi^{2}-\log cosh(\theta \phi).
    \end{equation}
    Differentiating twice we get
    $$\theta_n\ge q_{\theta_n}''(\phi)\ge \theta_n-\theta_n^2,$$
    and so if $\theta_0\in \Theta_3$, there exists finite positive constants $c_1,c_2$ depending only on $\theta_0,h$ (and free of $n$), such that for all $n$ large enough we have
    $$c_1\le q_{\theta_n}''(\phi)\le c_2.$$
    Consequently, for any $\phi$ we have
    $$\frac{c_1}{2}\phi^2\le q_{\theta_n}(\phi)\le \frac{c_2}{2}\phi^2,$$
   and so for any $K>0$ we have
    \begin{align*}
        \P(\sqrt{n}|\phi_n|>K)\le \frac{2\int_{K}^\infty e^{-n q_{\theta_n}(\phi)}d\phi}{\int_{-\infty}^\infty  e^{-nq_{\theta_n}(\phi)}d\phi}
        \le \frac{2\int_K^\infty e^{-c_1\phi^2/2}d\phi}{\int_{-\infty}^\infty e^{-c_2 \phi^2/2}d\phi}
        \le 2\sqrt{\frac{c_2}{c_1}} \P\Big(N(0, \frac{1}{c_1})>K\Big).
    \end{align*}
    Thus we have $\sqrt{n}\phi_n=O_p(1)$, and further all moments of $\sqrt{n}\phi_n$ are bounded. Finally, since $q_{\theta_n}''(\phi)\to\theta_0-\theta_0^2,$ it follows from standard calculus that for any $a,b$ fixed, standard calculus gives
    \begin{align*}
       \sqrt{n} \int_{a/\sqrt{n}}^{b/\sqrt{n}}e^{-nq_{\theta_n(\phi)}}d\phi\to \int_a^b e^{-\frac{\theta_0-\theta_0^2}{2}t^2}dt.    \end{align*}
    Combining the above calculations, it follows that $$\sqrt{n}\phi_n\to N\Big(0,\frac{1}{\theta_0-\theta_0^2}\Big),$$
    in distribution and in moments.
    \\
    
    \item[(b)] \textbf{Critical Regime} $\Theta_{2}$. 
    \\
    
   In this case we have
   \begin{align*}
   q_{\theta_n}'(0)=q_{\theta_n}'''(0)=0,\quad
       \sqrt{n}q_{\theta_n}''(0)=\sqrt{n}(\theta_n-\theta_n^2)\to-h,\quad
      c_1'\le  \inf_{|\phi|\le 2}q_{\theta_n}''''(\phi)\le \sup_{|\phi|\le 2}q_{\theta_n}''''(\phi)\le c_2',
   \end{align*}
   for some constants $c_1', c_2'$ depending only on $h$. Also, using Proposition \ref{ppn_aux} we have 
   $$\P(|\phi_n|>2)\le \P(|N(0,1)|>\sqrt{n\theta_n}),$$
   which is exponentially small in $n$. The above two displays together give $n^{1/4}\phi_n=O_p(1)$. Finally, fixing $a,b$,  straight-forward calculus gives
   \begin{align*}
       \int_{a/n^{1/4}}^{b/n^{1/4}} n^{1/4} e^{-nq_{\theta_n}(\phi)}d\phi\to \int_a^b e^{\frac{h}{2}\phi^2-\phi^4/12 }d\phi,
   \end{align*}
   where we use the fact that
   $q_{\theta_n}''''(0)\to 2.$
   Combining, the desired limiting distribution follows. Uniform integrability also follows from the estimates on $q_{\theta_n}(.)$.
   \\
   
   \item[(c)] \textbf{Low Temperature Regime} $\Theta_{1}$.
   \\
   
   In this case we have $\P(\phi_n>0)=\P(\phi_n<0)=\frac{1}{2}$ by symmetry. Restricting on the positive half without loss of generality, note that the function $q_\theta(\phi)$ has a unique minimizer in $\phi$ on $(0,\infty)$, at the point $m(\theta)$. From Proposition \ref{ppn_aux} we have $q''(m(\theta))>0$, and so the function $\Psi:[0,2]\times [0,1]$ defined by
   \begin{align*}
  \Psi(x,\theta):=& \frac{q_\theta(x)-q_\theta(m(\theta))}{(x-m(\theta))^2}\text{ if }x\ne m(\theta),\\
  =&\frac{q''_\theta(m(\theta))}{2}\text{ if }x=m(\theta),
  \end{align*}
   is strictly positive and continuous, and so there exists finite positive constants $c_1,c_2$ depending on $\theta_0,h$, such that for all $\phi>0$ we have
   $$\frac{c_1}{2}\phi^2\le q_{\theta_n}(\phi)-q_{\theta_n}(m({\theta_n}))\le \frac{c_2}{2}\phi^2.$$
   From this, a similar calculation as in part (a) of this lemma applies, on noting that
   $$q''_{\theta_n}(m({\theta_n}))\stackrel{p}{\to} \theta_0-\theta_0^2\Big(1-m^2({\theta_0})\Big). $$
    
\end{enumerate}

\end{proof}

\subsubsection{Two analysis results}

In this section, we first verify Proposition \ref{ppn:m}, which was used to prove the main results, and will be used in the sequel as well.

\begin{proof}[Proof of \ref{ppn:m}]

We prove the more general result
$$\lim_{h\to 0}\frac{m(\theta_0+h)-m(\theta_0)}{h}=m(\theta_0)\sigma^2(\theta_0).$$
The desired conclusion then follows on replacing $h$ by $\frac{h}{\sqrt{n}}$, and letting $n\to\infty$. Recall that $m(\theta)$ satisfies the equation $w(\theta,m)=0$ in $m$, where
$$w(\theta,m):=m-\tanh(\theta m).$$
Differentiating with respect to $\theta$ we get
$$\frac{\partial w(\theta,m)}{\partial \theta}=1-\theta \text{sech}^2(\theta m).$$
By Proposition \ref{prop_exist}, we have $\theta(1-m^2(\theta))<1$, and so the above derivative is always positive. By Implicit Function Theorem, it follows that the function $\theta\mapsto m(\theta)$ is differentiable. On differentiating the equation 
$$m(\theta)=\tanh(\theta m(\theta))$$
with respect to $\theta$, we get
$$m'(\theta)=\text{sech}^2(\theta m(\theta))[m(\theta)+\theta m'(\theta)]=(1-m^2(\theta))[m(\theta)+\theta m'(\theta)].$$
Solving for $m'(\theta)$ gives
$$m'(\theta)=\frac{m(\theta)(1-m^2(\theta)}{1-\theta(1-m^2(\theta))}=m(\theta) \sigma^2(\theta),$$
as desired.

\end{proof}

Finally, we prove a uniform convergence result for monotone functions. 
\begin{ppn}\label{ppn:monotone}
Suppose $g_n(.)$ is a sequence of functions defined on a compact interval $[a,b]$. Assume the following:
\begin{itemize}
    \item There exists a function $g_\infty(.)$ on $[a,b]$, such that $g_n(.)$ converges to $g_\infty(.)$ pointwise.
    
   \item The function $g_n(.)$ is non-decreasing, for every $n\ge 1$.
   
   \item
   The function $g_\infty(.)$ is continuous.
    
\end{itemize} 

Then $g_n$ converges to $g_\infty$ uniformly on $[a,b]$.
\end{ppn}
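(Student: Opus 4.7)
The plan is to adapt the classical proof that pointwise convergence of monotone functions to a continuous limit upgrades to uniform convergence (a standard companion to Dini's theorem, but with monotonicity in the spatial variable rather than in the index).

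First I would exploit the continuity of $g_\infty$ on the compact interval $[a,b]$ to get uniform continuity: for a given $\varepsilon > 0$ there exists $\delta > 0$ such that $|g_\infty(x) - g_\infty(y)| < \varepsilon$ whenever $|x - y| < \delta$. I would then choose a finite mesh $a = x_0 < x_1 < \cdots < x_N = b$ with consecutive spacing less than $\delta$, so that the oscillation of $g_\infty$ on each $[x_{i-1}, x_i]$ is at most $\varepsilon$.

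Next, I would use pointwise convergence $g_n \to g_\infty$ at the finitely many mesh points to find $n_0$ such that $|g_n(x_i) - g_\infty(x_i)| < \varepsilon$ for all $i \in \{0, 1, \ldots, N\}$ and all $n \ge n_0$. The main (small) step is then to transfer control from the mesh to a general point $x \in [a,b]$ using monotonicity. For $x \in [x_{i-1}, x_i]$, the inequality
\[
g_n(x_{i-1}) \le g_n(x) \le g_n(x_i)
\]
combined with the two bounds $g_n(x_i) \le g_\infty(x_i) + \varepsilon \le g_\infty(x) + 2\varepsilon$ and $g_n(x_{i-1}) \ge g_\infty(x_{i-1}) - \varepsilon \ge g_\infty(x) - 2\varepsilon$ yields $|g_n(x) - g_\infty(x)| \le 2\varepsilon$ uniformly in $x$ for $n \ge n_0$.

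I do not anticipate a real obstacle here; the only subtlety is remembering that monotonicity is in the argument of $g_n$ (not in $n$), which is exactly what lets a finite control at mesh points propagate to every point of the interval. Since $\varepsilon$ is arbitrary, uniform convergence on $[a,b]$ follows.
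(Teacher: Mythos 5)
Your proof is correct, and it takes a genuinely different (and arguably cleaner) route than the paper. You give the classical finite-mesh argument: uniform continuity of $g_\infty$ on the compact interval yields a $\delta$-mesh, pointwise convergence at the finitely many mesh points gives a common $n_0$, and monotonicity of each $g_n$ propagates the control from mesh points to the whole interval by sandwiching $g_n(x_{i-1}) \le g_n(x) \le g_n(x_i)$. The paper instead proves the statement through the sequential characterization of uniform convergence: it fixes an arbitrary sequence $t_n \to t_\infty$ in $[a,b]$, uses monotonicity to bound $g_n(t_n) \le g_n(t_\infty + \delta)$ for large $n$, takes $\limsup$ using pointwise convergence at $t_\infty + \delta$, and then lets $\delta \downarrow 0$ via continuity of $g_\infty$ (and symmetrically for the $\liminf$). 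The paper's proof is shorter but implicitly relies on the equivalence, on a compact interval with continuous limit, between uniform convergence and the convergence $g_n(t_n) \to g_\infty(t_\infty)$ for every convergent sequence $\{t_n\}$ — a fact it does not state or prove. Your argument avoids that reduction and is entirely self-contained; it is the more standard textbook treatment, and either is acceptable here.
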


\begin{proof}

Let $\{t_n\}_{n\ge 1}$ be a real sequence in $[a,b]$ converging to $t_\infty$. We need to show that $g_n(t_n)$ converges to $g_\infty(t_\infty)$. To this effect, fixing $\delta>0$ arbitrary, for all $n$ large we have
$|t_n-t_\infty|<\delta$. Using the monotonicity of $\{g_n\}_{1\le n\le \infty}$ gives $$g_n(t_n)-g_\infty(t_\infty)\le g_n(t-\delta)-g_\infty(t_\delta).$$
Taking limits as $n\to\infty$ gives
$$\limsup_{n\to\infty}\{g_n(t_n)-g_\infty(t_\infty)\}\le g_\infty(t_\infty+\delta)=g_\infty(\delta).$$
Since $\delta$ is arbitrary, letting $\delta\downarrow 0$ and using the continuity of $g_\infty(.)$ gives
$$\limsup_{n\to\infty}\{g_n(t_n)-g_\infty(t_\infty)\}\le 0.$$
A similar proof gives
$$\liminf_{n\to\infty}\{g_n(t_n)-g_\infty(t_\infty)\}\ge 0.$$
The proof is complete by combining the last two displays.

\end{proof}


\subsection{Two auxiliary lemmas}\label{sec:B}

We now utilize the results of section \ref{sec:A} to state and prove two auxiliary lemmas under the Curie-Weiss model, which we will use to verify Lemma \ref{lem:mean} and Lemma \ref{lem:normalizing_ising}.

\begin{lem}\label{XBX_CW}
For any $h\in\R$ and $\theta_0>0$, let $\theta_n=\theta_0+\frac{h}{\sqrt{n}}$ be as in definition \ref{def:thetan}.
Let $\mathbf{X}\sim\P_{\theta_n, {\rm CW}}$, where $Q_{n}$ is a sequence of matrices which satisfy \eqref{RIC}, \eqref{CWC}, \eqref{eq:frob} and \eqref{eq:cut}. Set $B_{n} = Q_{n}-\frac{1}{n}\bf{1}\bf{1}^{T}$ as before. Also, let $W_{\theta_0}\sim N(0,\sigma^2(\theta_0))$, $U_h\sim \mathbb{H}_h$ (see \eqref{H_h}) be independent of $(S_0,T_0)$ (see \eqref{eq:ss} and \eqref{eq:tt} respectively). Then the following conclusions hold under $\P_{\theta_n,{\rm CW}}$:
\begin{enumerate}
    \item[(a)]\textbf{Low Temperature Regime}:\label{XBX_a1}: $\theta_{0}\in \Theta_1$. 
   \\
   
    \begin{enumerate}
        \item[(i)]
  With $m(.)$ as in definition \ref{Three_domains}, we have 
    \begin{equation*}
    \Big(\sqrt{n}(\Bar{\mathbf{X}}-m(\theta_n)),\mathbf{X}^{T}B_{n}\mathbf{X}, {\mathbf X}^TB_n^2{\mathbf X}\Big)\stackrel{d}{\longrightarrow}(W_{\theta_0},(1-m^2(\theta_0))S_0, (1-m^2(\theta_0))T_0).
\end{equation*}

\item[(ii)]
Further we have
$$\lim_{n\to\infty}\Big\{Z_n(\theta_n,{\rm CW})-Z_n(\theta_0,{\rm CW})-\frac{\sqrt{n}m^2(\theta_0)}{2}\Big\}=\frac{R(\theta_0)h^2}{2},$$
  \end{enumerate}
  where $R(\theta_0)$ is as defined in Theorem \ref{test_low}.
  \\
  
    \item[(b)]\textbf{Critical Point}:\label{XBX_a2}: $\theta_0\in \Theta_{2}$. 
    \\
    
    \begin{enumerate}
        \item[(i)] 
  We have \begin{equation*}
    \big(n^{1/4}\Bar{\mathbf{X}}, \mathbf{X}^{T}B_{n}\mathbf{X},\mathbf{X}^{T}B_{n}^{2}\mathbf{X}\big)\stackrel{d}{\longrightarrow} (U_h,S_0,T_0).
\end{equation*}

\item[(ii)]
Further, we have
$$ \lim_{n\to\infty}\{Z_n(1+h_n)-Z_n(1)\}=F(h)-F(0),$$
where $F(.)$ is as defined in \eqref{H_h}.
\end{enumerate}

%
%
%
%
    \end{enumerate}
    \end{lem}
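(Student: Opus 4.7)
Both parts of the lemma will be proved by exploiting the Hubbard--Stratonovich representation of the Curie--Weiss measure furnished by Proposition \ref{ppn_aux}: conditional on the auxiliary Gaussian $\phi_n$, the coordinates $X_i$ are IID with mean $\mu_n := \tanh(\theta_n \phi_n)$ and variance $\tau_n^2 := 1 - \mu_n^2$. A crucial simplification is that \eqref{RIC} implies $B_n \mathbf{1} = 0$, so with $Z_i := X_i - \mu_n$ we have
\[{\mathbf X}^T B_n {\mathbf X} = {\mathbf Z}^T B_n {\mathbf Z}, \qquad {\mathbf X}^T B_n^2 {\mathbf X} = {\mathbf Z}^T B_n^2 {\mathbf Z}.\]
Conditional on $\phi_n$, the IID residuals ${\mathbf Z}$ fall directly into the framework of Lemma \ref{lem:comparison}, while the marginal behavior of $\phi_n$ (and hence of $\mu_n$, $\tau_n^2$) is governed by Lemma \ref{lem:phi1}. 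The whole strategy is to combine these two inputs.

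\textbf{Part (a)(i).} On the event $\phi_n > 0$, Lemma \ref{lem:phi1}(c) gives $\sqrt{n}(\phi_n - m(\theta_n)) \Rightarrow N(0, 1/[\theta_0(1 - \theta_0(1-m^2(\theta_0)))])$ and $\tau_n^2 \to 1 - m^2(\theta_0)$ in probability. Applying Lemma \ref{lem:comparison} conditionally on $\phi_n$ and then unconditioning yields the joint convergence of $({\mathbf X}^T B_n {\mathbf X}, {\mathbf X}^T B_n^2 {\mathbf X})$ to $((1-m^2(\theta_0)) S_0, (1-m^2(\theta_0)) T_0)$, where the limit is independent of $\phi_n$ because only the deterministic prefactor $\tau^2$ survives. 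For the sample mean I use the decomposition
\[\sqrt{n}(\bar{\mathbf X} - m(\theta_n)) = \sqrt{n}(\phi_n - m(\theta_n)) - \sqrt{n}(\phi_n - \bar{\mathbf X}),\]
and note that $\phi_n - \bar{\mathbf X} \sim N(0, 1/(n\theta_n))$ is independent of $\bar{\mathbf X}$, so a Gaussian-minus-Gaussian calculation reduces the variance from $1/[\theta_0(1-\theta_0(1-m^2(\theta_0)))]$ to $\sigma^2(\theta_0)$. Asymptotic independence of $W_{\theta_0}$ from $(S_0, T_0)$ follows from the independence of $\sqrt{n}\bar{\mathbf Z}$ from the quadratic forms already given by Lemma \ref{lem:comparison}, combined with the fact that the quadratic-form limits do not feel $\phi_n$.

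\textbf{Part (a)(ii).} I use the Gaussian integral identity
\[Z_n(\theta, \mathrm{CW}) = \tfrac{1}{2}\log(n\theta/(2\pi)) + n\log 2 + \log \int_{\mathbb{R}} e^{-n q_\theta(\phi)}\,d\phi,\]
with $q_\theta(\phi) = \theta \phi^2/2 - \log \cosh(\theta \phi)$. Laplace's method at the two symmetric minima $\pm m(\theta)$ reduces this to $-n g(\theta) + O(1)$ where $g(\theta) := q_\theta(m(\theta))$. Since $q_\theta'(m(\theta)) = 0$, the envelope theorem gives $g'(\theta) = \partial_\theta q_\theta|_{\phi=m(\theta)} = -m^2(\theta)/2$, so a second-order Taylor expansion of $g$ at $\theta_0$ yields
\[n[g(\theta_n) - g(\theta_0)] = -\tfrac{\sqrt{n}\,h\,m^2(\theta_0)}{2} - \tfrac{h^2 m(\theta_0) m'(\theta_0)}{2} + o(1).\]
The subleading Laplace corrections (through $\log q_\theta''(m(\theta))$) are continuous in $\theta$ and contribute $o(1)$ to the difference. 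Substituting $m'(\theta_0) = m(\theta_0)\sigma^2(\theta_0)$ from Proposition \ref{ppn:m} identifies the $h^2$ coefficient as $R(\theta_0)/2$, as required.

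\textbf{Part (b) and main obstacle.} The critical regime runs in parallel with different scaling: Lemma \ref{lem:phi1}(b) gives $n^{1/4}\phi_n \Rightarrow \mathbb{H}_h$ and $\tau_n^2 \to 1$. Lemma \ref{lem:comparison} then delivers $({\mathbf X}^T B_n {\mathbf X}, {\mathbf X}^T B_n^2 {\mathbf X}) \Rightarrow (S_0, T_0)$, independent of $\phi_n$. Taylor-expanding $\tanh(\theta_n \phi_n) = \phi_n + (\theta_n-1)\phi_n - \phi_n^3/3 + O(\phi_n^5)$ together with the conditional bound $\sqrt{n}\bar{\mathbf Z} = O_p(1)$ gives $n^{1/4}(\bar{\mathbf X} - \phi_n) = o_p(1)$, so $n^{1/4}\bar{\mathbf X}$ has the same limit as $n^{1/4}\phi_n$, namely $U_h$. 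For (ii), after the change of variables $\phi = u\,n^{-1/4}$, a fourth-order expansion of $q_{\theta_n}$ around $0$ shows that $n q_{\theta_n}(u n^{-1/4}) \to u^4/12 - h u^2/2$ pointwise; combined with standard tail control from the strict convexity of $q_\theta$ at infinity, dominated convergence gives $\int e^{-n q_{\theta_n}(\phi)}d\phi \sim n^{-1/4} e^{F(h)}$, and the $n^{-1/4}$ cancels in the difference $Z_n(\theta_n, \mathrm{CW}) - Z_n(1, \mathrm{CW})$. The main technical subtlety throughout is promoting conditional convergence given $\phi_n$ into unconditional joint convergence while preserving the independence of $W_{\theta_0}$ (resp.\ $U_h$) from $(S_0, T_0)$; the uniform resolution is that the conditional limits of the quadratic forms depend on $\phi_n$ only through $\tau_n^2$, which converges deterministically in both regimes, thereby decoupling them from the $\phi_n$-driven randomness in the sample mean.
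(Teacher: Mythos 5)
Your proof rests on the same three building blocks the paper uses (the Hubbard--Stratonovich mixture representation of Proposition \ref{ppn_aux}, the quadratic-form CLT of Lemma \ref{lem:comparison}, and the tilted-Gaussian asymptotics of $\phi_n$ in Lemma \ref{lem:phi1}), and the overall logic is sound, but you take a genuinely different route in two places.

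For the normalizing-constant parts (a)(ii) and (b)(ii), you apply Laplace/saddle-point asymptotics directly to the one-dimensional integral representation
$Z_n(\theta,{\rm CW}) = \tfrac12\log(n\theta/2\pi) + n\log 2 + \log\int e^{-nq_\theta(\phi)}\,d\phi$,
expanding the minimum value $g(\theta)=q_\theta(m(\theta))$ to second order via the envelope theorem and noting that the subleading corrections depend continuously on $\theta$ and therefore wash out in the difference $Z_n(\theta_n)-Z_n(\theta_0)$. The paper instead works with the derivative $Z_n'(\theta_n,{\rm CW})/\sqrt{n}=\tfrac{\sqrt{n}}{2}\E\bar{\mathbf X}^2$, identifies its limit through convergence in distribution plus uniform integrability of $\sqrt{n}(\phi_n-m(\theta_n))$, and then integrates over $[0,h]$ using a Dini-type uniform-convergence argument (Proposition \ref{ppn:monotone}). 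Your route is more self-contained and arguably cleaner in that it avoids the uniform-integrability and monotone-convergence scaffolding, at the cost of needing to verify that the Laplace error term is uniform in $\theta$ near $\theta_0$ (it is, because all derivatives of $q_\theta$ are continuous in $\theta$ and the minimizer $m(\theta)$ varies smoothly, but you should say so explicitly). Similarly, for the sample-mean fluctuation in part (a)(i) you use the Gaussian-deconvolution identity $\sqrt{n}(\bar{\mathbf X}-m(\theta_n)) = \sqrt{n}(\phi_n-m(\theta_n)) - W_0/\sqrt{\theta_n}$ with $W_0\perp{\mathbf X}$, subtracting the variance $1/\theta_0$ from the limiting variance of $\sqrt{n}\phi_n$; the paper instead decomposes $\bar{\mathbf X}-m(\theta_n)=(\bar{\mathbf X}-\mu_n)+(\mu_n-m(\theta_n))$, getting two conditionally independent Gaussian pieces whose variances add. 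Your deconvolution version needs one extra sentence to justify that the limit of $\sqrt{n}(\bar{\mathbf X}-m(\theta_n))$ is indeed Gaussian (rather than merely having Gaussian sum with an independent Gaussian), e.g.\ by the characteristic-function division argument using that $\sqrt{n}\phi_n$ and $W_0$ converge jointly and $W_0$ has a never-vanishing characteristic function; and you should be careful that this deconvolution by itself does not automatically give the joint limit and asymptotic independence from $(S_0,T_0)$ --- for that you still need the paper's conditioning-on-$\phi_n$ decomposition (which you invoke anyway for the quadratic forms), so it is cleaner to prove the joint statement directly as the paper does and read off the marginal.

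Overall: correct, same scaffolding, but a genuinely different (and in places cleaner) treatment of the normalizing-constant asymptotics.
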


\begin{proof}
\begin{enumerate}
    \item[(a)]
Before we begin the proof, we point out that Lemma \ref{lem:phi1} part (c) implies that by symmetry, $$\phi_n\stackrel{d}{\to}\frac{1}{2}(\delta_{m(\theta_0)}+\delta_{-m(\theta_0)}),$$
    and Proposition \ref{ppn_aux} implies that 
    $$|{\phi}_n-\bar{\mathbf X}|\stackrel{p}{\to}0.$$
  The last two displays together imply $$\P_{n,\theta_n}({\phi_n}<0|\bar{\mathbf X}>0)\to 0,$$
    and so without loss of generality we can interchange between the conditioning events $\bar{\mathbf X}>0$ and $\phi_n>0$. Also, conditional on $\phi_n>0$ we have $\phi_n\stackrel{p}{\to}m(\theta_0)$, from Lemma \ref{lem:phi1} part (c).

\begin{enumerate}
    \item[(i)] Using Proposition \ref{ppn_aux}, conditioning on $\phi_n$, the random variables $(X_1,\ldots,X_n)$ are IID, with
    $$\E(X_1|\phi_n)=\tanh(\theta_n\phi_n)=:\mu_n,\quad Var(X_1|\phi_n)=\text{sech}^2(\theta_n\phi_n).$$ Noting that $B_n{\bf 1}={\bf 0}$, setting ${\bm \mu}_n:=\mu_n{\bf 1}$ one can write
    \begin{align}
    \begin{split}
       & \Big[\sqrt{n}(\bar{\mathbf X}-m(\theta_n)), {\mathbf X}^TB_n{\mathbf X}, {\mathbf X}^TB^2_n{\mathbf X}\Big]\\
      \label{eq:quadratic}  =& \Big[\sqrt{n}(\bar{\mathbf X}-\mu_n),  ({\mathbf X}-{\bm \mu}_n)^T B_n({\mathbf X}-{\bm \mu}_n), ({\mathbf X}-{\bm \mu}_n)^TB^2_n({\mathbf X}-{\bm \mu}_n)\Big]\\
      +&\Big[\sqrt{n}(\mu_n-m(\theta_n)),0,0\Big].
      \end{split}
    \end{align}
    By Lemma \ref{lem:comparison}, conditioning on $\phi_n$, on the event $\phi_n>0$ we get
    \begin{align}\label{eq:quadratic2}
        \begin{split} &\Big[\sqrt{n}(\bar{\mathbf X}-\mu_n),  ({\mathbf X}-\bm{\mu}_n)^T B_n({\mathbf X}-\bm{\mu}_n), ({\mathbf X}-\bm {\mu}_n)^TB^2_n({\mathbf X}-\bm{\mu}_n)\Big]\\
         \stackrel{d}{\to}&\Big[\tau W_0, \tau^2S_0, \tau^2 T_0\Big],
         \end{split}
    \end{align}
    where $$\text{sech}(\theta_n\phi_n)\stackrel{p}{\to} \text{sech}(\theta_0m(\theta_0))=\sqrt{1-m^2(\theta_0)}=:\tau.$$
    Proceeding to analyze the second term in the RHS of \eqref{eq:quadratic}, a one term Taylor's expansion then gives that 
    \begin{align*}
        \sqrt{n}(\mu_n-m(\theta_n))=&\sqrt{n}\Big(\tanh(\theta_n\phi_n)-\tanh(\theta_nm(\theta_n))\Big)\\
        =&\sqrt{n}(\phi_n-m(\theta_n))\theta_n\text{sech}^2(\theta_n\xi_n),
    \end{align*}
    where $\xi_n$ lies between $\phi_n$ and $m(\theta_n)$, and hence converges to $m(\theta_0)$ in probability. Using the above display, conditional on $\phi_n>0$ we have
   \begin{align*}
        \sqrt{n}(\mu_n-m(\theta_n))\stackrel{d}{\to}&\theta_0 \text{sech}^2(\theta_0 m(\theta_0))N\Big(0,\frac{1}{\theta_0-(1-m^2(\theta_0))\theta_0^2}\Big)\\
        =& N\Big(0,\frac{\theta_0(1-m^2(\theta_0))^2}{1-(1-m^2(\theta_0))\theta_0}\Big).
        \end{align*}
        Combining the last display along with \eqref{eq:quadratic} and \eqref{eq:quadratic2}, the conclusion of part (a) follows, on noting that
        $$1-m^2(\theta_0)+\frac{\theta_0(1-m^2(\theta_0))^2}{1-(1-m^2(\theta_0))\theta_0}=\frac{1-m^2(\theta_0)}{1-\theta_0(1-m^2(\theta_0))}=\sigma^2(\theta_0).$$

        \item[(ii)]
        To begin, use Proposition \ref{ppn_aux} to get
        \begin{align}\label{eq:ii0}
        \phi_n=\bar{\mathbf X}+\frac{W_0}{\sqrt{n\theta_n}},
            \end{align}
            where $W_0\sim N(0,1)$ is independent of $\mathbf{X}$. This gives
            \begin{align}\label{eq:ii1}\sqrt{n}\Big(\bar{\mathbf X}-m(\theta_n)\Big)=\sqrt{n}\Big(\phi_n-m(\theta_n)\Big)-\frac{W_0}{\sqrt{n\theta_n}}.
            \end{align}
            which along with part (c) of Lemma \ref{lem:phi1} shows that conditional on $\bar{\mathbf X}>0$ we have
            $$\sqrt{n}\Big(\bar{\mathbf X}-m(\theta_n)\Big)\stackrel{d}{\to}N\Big(0, \frac{1-m^2(\theta_0)}{1-\theta_0(1-m^2(\theta_0))}\Big)=N(0,\sigma^2(\theta_0)).$$
            Thus in turn implies that unconditionally, we have
             $$\sqrt{n}\Big(\bar{\mathbf X}^2-m^2(\theta_n)\Big)\stackrel{d}{\to}N(0,4\sigma^2(\theta_0)m^2(\theta_0)).$$
             Using Proposition \ref{ppn:m}, we then have
             $$\frac{\sqrt{n}}{2}\Big(\bar{\mathbf X}^2-m^2(\theta_0)\Big)\stackrel{d}{\to}N\Big(m^2(\theta_0) \sigma^2(\theta_0) h,m^2(\theta_0)\sigma^2(\theta_0)\Big),$$
which gives
        \begin{align*}
        \frac{Z_n'(\theta_0+h/\sqrt{n}, {\rm CW})}{\sqrt{n}}-\frac{\sqrt{n} m^2(\theta_0)}{2}=&\frac{\sqrt{n}}{2}\Big(\E_{\P_{\theta_n, {\rm CW}}} \bar{\mathbf X}^2-m^2(\theta_0)\Big)\\
        \to &\E N\Big(m^2(\theta_0)\sigma^2(\theta_0)h, m^2(\theta_0)\sigma^2(\theta_0)\Big)^2\\
        =&m^2(\theta_0)\sigma^2(\theta_0)h=R(\theta_0)h.
        \end{align*}
        In the line above, we have used the fact that $\sqrt{n}(\bar{\mathbf X}^2-m^2(\theta_0))$ is uniformly integrable. But this follows on using \eqref{eq:ii1}, along with the fact that $\sqrt{n}(\phi_n-m(\theta_n))$ is uniformly integrable (from Lemma \ref{lem:phi1} part (c)).
        \\
        
      The convergence in the above display holds for all $h$ fixed. Integrating both sides over the interval $[0,h]$ we get
        $$Z_n(\theta_0+h/\sqrt{n},{\rm CW})-Z_n(\theta_0,{\rm CW})-\frac{\sqrt{n}hm^2(\theta_0)}{2}\to \frac{R(\theta_0)h^2}{2},$$
        as desired.
        In the last convergence above, we use the fact that the function $h\mapsto \frac{Z_n'(\theta_0+h/\sqrt{n},{\rm CW})}{\sqrt{n}}$ is monotone, and hence converges uniformly over compact sets, by Proposition \ref{ppn:monotone}. 
        This completes the proof of part (a).
        \\
        
    \end{enumerate}

    \item[(b)]

    \begin{enumerate}
        \item[(i)]
        Again using calculations similar to \eqref{eq:quadratic}, we get
        \begin{align*}
       & \Big[n^{1/4}\bar{\mathbf X}, {\mathbf X}^TB_n{\mathbf X}, {\mathbf X}^TB^2_n{\mathbf X}\Big]\\
      =& \Big[n^{1/4}(\bar{\mathbf X}-\mu_n),  ({\mathbf X}-{\bm \mu}_n)^T B_n({\mathbf X}-{\bm \mu}_n), ({\mathbf X}-{\bm \mu}_n)^TB^2_n({\mathbf X}-{\bm \mu}_n)\Big]+[n^{1/4}\mu_n,0,0].
        \end{align*}
        Conditioning on $\phi_n$, using Proposition \ref{ppn_aux} and Lemma \ref{lem:comparison} we have
        $$\Big[n^{1/4}(\bar{\mathbf X}-\mu_n),  ({\mathbf X}-{\bm \mu}_n)^T B_n({\mathbf X}-{\bm \mu}_n), ({\mathbf X}-{\bm \mu}_n)^TB^2_n({\mathbf X}-{\bm \mu}_n)\Big]\stackrel{d}{\to}[0, S_0,T_0],$$
        where we use the fact
        $$Var(X_1|\phi)=\text{sech}^2(\theta_n\phi_n)\stackrel{p}{\to}1.$$
        Also, an application of delta theorem along with Lemma \ref{lem:phi1} part (b) gives
        $$n^{1/4}\mu_n=n^{1/4}\tanh(\phi_n)\stackrel{d}{\to}U_{1.h}.$$
        Combining the above, it follows that
        $$\Big[n^{1/4}\bar{\mathbf X}, {\mathbf X}^TB_n{\mathbf X}, {\mathbf X}^TB^2_n{\mathbf X}\Big]\stackrel{d}{\to}[U_h, S_0, T_0]$$
        as desired.
        
        \item[(ii)]
        
        Using \eqref{eq:ii0} we get
        \begin{align*}
           n^{1/4}\bar{\mathbf X}=n^{1/4}\phi_n-n^{1/4}\frac{W_0}{\sqrt{n\theta_0}}\stackrel{d}{\to}U_h,
           \end{align*}
           where we have used Lemma \ref{lem:phi1} part (b). 
           This in turn gives,
        $$\sqrt{n} \bar{\mathbf X}^2\stackrel{d}{\to}U_h^2.$$
       and so
        \begin{align*}
            \frac{Z_n'(\theta_n)}{\sqrt{n}}=\frac{\sqrt{n}}{2}\E_{\P_{\theta_n, {\rm CW}}}\bar{\mathbf X}^2\to \frac{1}{2}\E U_h^2=F'(h).
        \end{align*}
        Again in the last step we use the fact that $\sqrt{n}\bar{X}^2$ is uniformly integrable, which follows from \eqref{eq:ii0} and Lemma \ref{lem:phi1} part (b). The desired conclusion again follows on integrating the above display over $[0,h]$, on noting that the above convergence is uniform on compact sets, by Proposition \ref{ppn:monotone}.

        \end{enumerate}

\end{enumerate}

\end{proof}

    \begin{lem}\label{Curie_weiss_LE}
    Suppose the matrix $Q_n$ satisfies \eqref{RIC}, \eqref{CWC}, \eqref{eq:frob} and \eqref{eq:cut}  for some $C_W, \kappa\in (0,\infty)$ and $f\in \mathcal{W}$. Let $\theta_0>0,h\in \R$, and $\theta_n=\theta_0+\frac{h}{\sqrt{n}}$ be as in definition \ref{def:thetan}. Then the following conclusions hold for $\theta\in \Theta_1\cup \Theta_2$.
    \begin{enumerate}
     \item[(a)]\label{XBX_b} 
    \begin{equation*}
    \lim\limits_{n\rightarrow\infty}Z_{n}(\theta_{n},Q_{n})-Z_{n}(\theta_{n},{\rm CW})=C(\theta_0),
    \end{equation*}
    where
    \begin{equation*}
    C(\theta_0):=-\frac{1}{2}\theta(1-m^2)+\frac{\kappa}{4}\theta_0^{2}(1-m^{2})^{2}+\frac{1}{2}\sum_{i=2}^{\infty}\Big[\log\big(1-\theta_0(1-m^2)\lambda_i\big)+\lambda_i\theta_0(1-m^2)\Big].
\end{equation*}
\item[(b)]\label{XBX_c} The probability measures $\P_{\theta_{n},Q_{n}}$ and $\P_{\theta_{n},{\rm CW}}$ are mutually contiguous.
\end{enumerate}
\end{lem}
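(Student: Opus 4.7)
The plan is to reduce part (a) to computing a moment generating function (MGF) under the (simpler) Curie-Weiss model, exploiting the fact that $Q_n = B_n + \tfrac{1}{n}\mathbf{1}\mathbf{1}^T$. This gives the clean identity
\begin{align*}
Z_n(\theta_n,Q_n)-Z_n(\theta_n,\mathrm{CW}) = \log \E_{\P_{\theta_n,\mathrm{CW}}} \exp\!\left(\tfrac{\theta_n}{2}\mathbf{X}^T B_n \mathbf{X}\right),
\end{align*}
so part (a) reduces to identifying the limit of this log-MGF as $n\to\infty$. Part (b) will then follow from part (a) via Le Cam's first lemma.

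For the limit, I would invoke Lemma \ref{XBX_CW}: under $\P_{\theta_n,\mathrm{CW}}$ we have $\mathbf{X}^T B_n \mathbf{X}\xrightarrow{d}(1-m^2(\theta_0))S_0$ in both regimes $\Theta_1$ and $\Theta_2$ (using $m(1)=0$ to unify). Since $\mathbf{X}^T B_n \mathbf{X}$ is invariant under $\mathbf{X}\mapsto -\mathbf{X}$, the conditional limit in the low-temperature case is automatically the unconditional limit. Setting $s := \theta_0(1-m^2(\theta_0))/2$ and using the explicit representation $S_0=\sum_{j\ge 2}\lambda_j(Y_j-1)-1+W^*$ from Lemma \ref{lem:comparison}, a direct MGF computation gives
\begin{align*}
\log \E e^{sS_0} = -s + \kappa s^2 + \sum_{j=2}^{\infty}\Big[-\tfrac{1}{2}\log(1-2s\lambda_j)-s\lambda_j\Big],
\end{align*}
which, after substitution, matches the claimed constant $C(\theta_0)$ (with the series converging by Proposition \ref{ppn:graphon}(a) since $2s\lambda_j\le \theta_0(1-m^2(\theta_0))<1$ by Proposition \ref{roots_domains}).

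The technical heart of the argument is upgrading convergence in distribution to convergence of the MGFs at the parameter $s_n:=\theta_n/2\to s$, i.e.~uniform integrability of $\exp(s_n \mathbf{X}^T B_n \mathbf{X})$ under $\P_{\theta_n,\mathrm{CW}}$. My approach is to condition on the auxiliary variable $\phi_n$ from Proposition \ref{ppn_aux}: conditionally the coordinates $X_i$ are IID and bounded in $[-1,1]$, so that after centering $\mathbf{X}-\bm{\mu}_n$ (using $B_n\mathbf{1}=\mathbf{0}$) the conditional MGF of the quadratic form admits the same asymptotic expansion as in the Gaussian case treated in Lemma \ref{lem:comparison}(a), by the moment-matching comparison in Lemma \ref{lem:comparison}(b). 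One then evaluates the MGF at a slightly inflated $s'>s$ chosen so that $2s'\sup_{j\ge 2}\lambda_j < 1$, which is possible by the spectral gap assumption in \eqref{eq:cut} together with $\theta_0(1-m^2(\theta_0))<1$. Taking expectations over the limiting behavior of $\phi_n$ described in Lemma \ref{lem:phi1} produces a uniform upper bound on $\E_{\P_{\theta_n,\mathrm{CW}}} e^{s' \mathbf{X}^T B_n \mathbf{X}}$, which yields the required uniform integrability. This is where I expect the main obstacle: $\mathbf{X}^T B_n\mathbf{X}$ can take both signs and is only bounded by $n$, so a genuine MGF argument (not just a moment bound) is necessary, and the spectral gap condition is essential.

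Finally, for part (b), the likelihood ratio
\begin{align*}
L_n(\mathbf{X}):= \frac{d\P_{\theta_n,Q_n}}{d\P_{\theta_n,\mathrm{CW}}}(\mathbf{X})=\exp\!\left(\tfrac{\theta_n}{2}\mathbf{X}^T B_n \mathbf{X}-[Z_n(\theta_n,Q_n)-Z_n(\theta_n,\mathrm{CW})]\right)
\end{align*}
converges in distribution under $\P_{\theta_n,\mathrm{CW}}$ to $L_\infty := \exp(sS_0 - C(\theta_0))$ by combining the convergence $\mathbf{X}^T B_n\mathbf{X}\xrightarrow{d}(1-m^2(\theta_0))S_0$ with part (a). By construction of $C(\theta_0)$ as the log-MGF of $sS_0$, we have $\E L_\infty = 1$, so Le Cam's first lemma yields mutual contiguity of the two sequences. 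Symmetrically (or by reversing the roles and using that the likelihood ratio in the other direction is the reciprocal), contiguity in both directions is obtained, completing the proof.
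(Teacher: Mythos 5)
Your overall strategy matches the paper's: reduce part (a) to a log-MGF under the Curie--Weiss measure via $Z_n(\theta_n,Q_n)-Z_n(\theta_n,{\rm CW})=\log\E_{\theta_n,{\rm CW}}\exp(\tfrac{\theta_n}{2}\mathbf{X}^T B_n\mathbf{X})$, identify the limit using Lemma~\ref{XBX_CW} plus a direct MGF computation for $S_0$, and get part (b) from Le Cam's first lemma once the limiting likelihood ratio has mean $1$. All of that is right and is exactly what the paper does.

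The gap is in the uniform integrability step, which you yourself flag as the ``technical heart.'' You propose to obtain the conditional MGF bound for $(\mathbf{X}-\bm{\mu}_n)^T B_n(\mathbf{X}-\bm{\mu}_n)$ given $\phi_n$ by invoking the moment-matching comparison of Lemma~\ref{lem:comparison}(b). That does not work: Lemma~\ref{lem:comparison}(b) gives convergence of each fixed-order mixed moment to its Gaussian counterpart, but the MGF is an infinite series of moments and the moment-wise error bound is $O(1/n)$ at each fixed order with no control uniform in the order, so it cannot be summed to produce a finite, $n$-independent bound on $\E\exp(s'\,\cdot\,)$. One needs a genuine exponential-moment (Hanson--Wright-type) inequality for quadratic forms of bounded independent variables, not a moment comparison. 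This is precisely what the paper supplies by invoking \cite[Proposition 4.1]{deb2020fluctuations}, which gives a conditional bound $\log\E_{\theta_n,\mathrm{CW}}(\exp(\tfrac{\theta_n}{2}(\mathbf{X}-\bm{\mu}_n)^T B_n(\mathbf{X}-\bm{\mu}_n))\mid\phi_n)\le C$ whenever $\theta_n\lambda_1(B_n)s_{\mu_n}<1$, using the spectral gap $\limsup_n\lambda_1(B_n)=\lambda_2<1$ from \eqref{eq:cut} as you anticipated. The paper also needs a second external estimate, \cite[Lemma 4.2]{deb2020fluctuations}, to control the contribution from the event where $\phi_n$ (equivalently $\mu_n$) is far from $\pm m(\theta_0)$; your proposal to simply ``take expectations over the limiting behavior of $\phi_n$'' does not by itself handle these tail events, since on them the conditional bound of the Hanson--Wright type may fail and one must separately verify the integral is negligible. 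With these two ingredients substituted for your moment-matching step, the rest of your argument (including the computation of $C(\theta_0)$ and part (b)) goes through as you wrote it.
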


\begin{proof}
\begin{enumerate}
\item[(a)]
To begin, note that
\begin{align*}
    \exp\Big(Z_n(\theta_n,Q_n)-Z_n(\theta_n,{\rm CW})\Big)=&\frac{\sum_{{\bf x}\in \{-1,1\}^n} e^{\frac{\theta_n}{2}{\mathbf X}^T Q_n{\mathbf X}}}{\sum_{{\bf x}\in \{-1,1\}^n} e^{\frac{\theta_n}{2}n\bar{\mathbf X}^2}}\\
    =&\E_{\theta_n, {\rm CW}} e^{\frac{\theta_n}{2}{\mathbf X}^TB_n{\mathbf X}}.
\end{align*}
It follows from Lemma \ref{XBX_CW} that if ${\mathbf X}\sim \P_{\theta_n,{\rm CW}}$, then for all $\theta_0\in \Theta_0$ we have
$${\mathbf X}^TB_n{\mathbf X}\stackrel{d}{\to}(1-m^2(\theta_0))S_0\Rightarrow e^{\frac{\theta_n}{2}}{\mathbf X}^TB_n{\mathbf X}\stackrel{d}{\to} e^{\frac{\theta_0(1-m^2(\theta_0))}{2}S_0}.$$
Assume now that there exists $\delta>0$ such that
\begin{align}\label{eq:ui_final}
\E_{\theta_n, {\rm CW}} e^{\frac{(1+\delta)\theta_n}{2}{\mathbf X}^TB_n{\mathbf X}}<\infty.
\end{align}
Uniform integrability then gives
$$\E_{\theta_n, {\rm CW}}e^{\frac{\theta_n}{2}{\mathbf X}^TB_n{\mathbf X}}\to \E e^{\frac{\theta_0(1-m^2(\theta_0))}{2}S_0}, $$
which equals $C(\theta_0)$ using the formula for $S_0$ (see \eqref{eq:ss}).
\\

It thus remains to verify \eqref{eq:ui_final}. To this effect, note that
\begin{align*}
    \E_{\theta_n, {\rm CW}} e^{\frac{\theta_n}{2}{\mathbf X}^TB_n{\mathbf X}}=\E_{\theta_n, {\rm CW}}\Big( e^{\frac{\theta_n}{2}({\mathbf X}-{\bm \mu}_n)^TB_n({\mathbf X}-{\bm \mu}_n)}\Big|\phi_n\Big),
\end{align*}
where ${\bm \mu}_n=\mu_n{\bf 1}$ with $\mu_n=\tanh(\theta_n\phi_n)$, as in the proof of Lemma \ref{XBX_CW}. Invoking Lemma \ref{ppn_aux}, we have that given $\phi_n$ the random variables $(X_1,\ldots,X_n)$ are IID with mean $\mu_n$. Also, setting 
\begin{eqnarray*}s_\mu:=&\frac{2\mu}{\log(1+\mu)-\log(1-\mu)}&\text{ if }\mu\ne 0,\\
=&1&\text{ if }\mu=0
\end{eqnarray*}
we have that $s_.$ is a strictly positive continuous even function, with
$s_{\mu_n}\stackrel{p}{\to}s_{m(\theta_0)}=\frac{1}{\theta_0}$
for all $\theta_0\in \Theta_1\cup \Theta_2$,
where the last equality uses the fact that $m(\theta_0)=\tanh(\theta_0m(\theta_0))$. Since $$\limsup_{n\to\infty}\lambda_1(B_n)=\lambda_2<1$$ by \eqref{eq:cut}, there exists $\delta>0$ such that on the set $||\mu_n|-m(\theta_0)|>\delta$ we have
$$\limsup_{n\to\infty}\theta_n \lambda_1(B_n)s_{\mu_n}<1.$$
Thus
using \cite[Proposition 4.1]{deb2020fluctuations} with
$$N=n, \quad D_N(i,j)=\theta_n B_n(i,j),\quad c_i=0,$$
we get the existence of a constant $C$ free of $n$ such that on the set $||\mu_n|-m|>\delta$ we have
$$\log\E_{\theta_n, {\rm CW}}\Big( e^{\frac{\theta_n}{2}({\mathbf X}-{\bm \mu}_n)^TB_n({\mathbf X}-{\bm \mu}_n)}\Big|\phi_n\Big)\le C.$$
To complete the proof of \eqref{eq:ui_final}, it suffices to show that 
$$\limsup_{n\to\infty}\E_{\theta_n, {\rm CW}} e^{\frac{\theta_n}{2}{\mathbf X}^TB_n{\mathbf X}}1\{|\mu_n|-m(\theta_0)|>\delta\}<\infty,$$
for some $\delta>0$.
But this follows from \cite[Lemma 4.2]{deb2020fluctuations} on using
$$N=n, \quad V_N=\theta_n{\mathbf X}^TB_n{\mathbf X}.$$
We note that the proof of \cite[Lemma 4.2]{deb2020fluctuations} goes through verbatim if $\theta=\theta_n$ depends on $n$, even though the lemma is stated for a fixed $\theta_0$.
\\

    \item[(b)]
   The likelihood ratio between $\P_{\theta_n,Q_n}$ and $\P_{\theta_n}$ is given by
   \begin{align*}
    \frac{d\P_{\theta_n,Q_n}}{d\P_{\theta_n, {\rm CW}}}({\mathbf X})=\exp\Big(\frac{\theta_n}{2}{\mathbf X}^TB_n{\mathbf X}-Z_n(\theta_n,Q_n)+Z_n(\theta_n,{\rm CW})\Big).
       \end{align*}
       Using Lemma \ref{XBX_CW}, for all $\theta_0\in \Theta$ we have
       $${\mathbf X}^TB_n{\mathbf X}\stackrel{d}{\to}(1-m^2(\theta_0))S_0.$$
   Also, using part (a) we have $$Z_n(\theta_n,Q_n)-Z_n(\theta_n,{\rm CW})\to \log \E e^{\frac{\theta_0(1-m^2(\theta_0))}{2}S_0}.$$
   Combining, if ${\mathbf X}\sim \P_{\theta_n, {\rm CW}}$, then we have
   \begin{align*}
       \frac{d\P_{\theta_n,Q_n}}{d\P_{\theta_n, {\rm CW}}}({\mathbf X})\stackrel{d}{\to} \frac{e^{\frac{\theta_0(1-m^2(\theta_0))}{2}S_0}}{\E e^{\frac{\theta_0(1-m^2(\theta_0))}{2}S_0}}.
   \end{align*}
   Since the limiting random variable in the above display is strictly positive and has mean 1, mutual contiguity follows by Le-Cam's first lemma.
    
\end{enumerate}
\end{proof}

\subsection{Proof of Lemma \ref{lem:mean} and Lemma \ref{lem:normalizing_ising}}\label{sec:C}

Using Lemma \ref{XBX_CW} and Lemma \ref{Curie_weiss_LE} , we now prove Lemma \ref{lem:mean} and Lemma \ref{lem:normalizing_ising}, which were stated in the main draft, and were used to prove our main results.

\begin{proof}[Proof of Lemma \ref{lem:mean}]

To begin, note that in all regimes of $\theta$, the following hold:
\begin{itemize}
    \item The log likelihood ratio \begin{align}\label{eq:ll}
    \log\frac{d\P_{\theta_n,Q_n}}{d\P_{\theta_n, {\rm CW}}}({\mathbf X})=\frac{\theta_n}{2}{\mathbf X}^TB_n{\mathbf X}-Z_n(\theta_n,Q_n)+Z_n(\theta_n,{\rm CW})
    \end{align}
    is a function of ${\mathbf X}^TB_n{\mathbf X}$.
    
    \item
    The asymptotic non degenerate limiting distribution of $\bar{\mathbf X}$ is jointly independent of $\mathbf{X}^TB_n\mathbf{X}$ and ${\mathbf{X}}^TB_n^2{\mathbf X}$ (this follows from Lemma \ref{XBX_CW}).
    
    \item
    The two measures $\P_{\theta_n,Q_n}$ and $\P_{\theta_n, {\rm CW}}$ are mutually contiguous (this follows from Lemma \ref{Curie_weiss_LE} part (b)).
\end{itemize}
It thus follows from Le-Cam's third lemma that the asymptotic non degenerate distribution of $\bar{\mathbf X}$ under $\P_{\theta_n,Q_n}$ and $\P_{\theta_n, {\rm CW}}$ are the same, and is asymptotically independent of the joint distribution of $({\mathbf X}^TB_n{\mathbf X}, {\mathbf X}^TB_n^2 {\mathbf X})$ under both models.
\\

To complete the proof of Lemma \ref{lem:mean}, it then suffices to show that for $\theta_0\in\Theta_1\cup \Theta_2$, under $\P_{\theta_n,Q_n}$ we have 
\begin{align}\label{eq:all_regimes2}
    (\mathbf{X}^TB_n\mathbf{X}, {\mathbf{X}}^TB_n^2{\mathbf X})\stackrel{d}{\to}(S_{\theta_0},T_{\theta_0}).
\end{align}
To show this, first note that under $\P_{\theta_n, {\rm CW}}$ we have
\begin{align*}
   & \left[\mathbf{X}^TB_n\mathbf{X}, {\mathbf{X}}^TB_n^2{\mathbf X},  \log\frac{d\P_{\theta_n,Q_n}}{d\P_{\theta_n, {\rm CW}}}({\mathbf X}) \right]\\
   =& \left[\mathbf{X}^TB_n\mathbf{X}, {\mathbf{X}}^TB_n^2{\mathbf X},  \frac{\theta_n}{2}{\mathbf X}^TB_n{\mathbf X}-Z_n(\theta_n,Q_n)+Z_n(\theta_n,{\rm CW})\right]\\
   \stackrel{d}{\to}&\left[\Big(1-m^2(\theta_0)\Big)S_0, \Big(1-m^2(\theta_0)\Big)T_0, \frac{\theta_0\Big(1-m^2(\theta_0)\Big)}{2}S_0-C(\theta_0)\right],
\end{align*}
 and we have used \eqref{eq:ll} in the first step, and Lemma \ref{XBX_CW} and Lemma \ref{Curie_weiss_LE} part (a) in the second step (and $C(\theta_0)$ is defined in Lemma \ref{Curie_weiss_LE} part (a)).
Using mutual contiguity, it follows that under $\P_{\theta_n,Q_n}$, we have
 $$(\mathbf{X}^TB_n\mathbf{X}, {\mathbf{X}}^TB_n^2{\mathbf X})\stackrel{d}{\to}(S',T'),
$$
where $(S',T')$ is a bi-variate random vector with characteristic function
\begin{align*}&\E e^{i(sS'+tT')}\\
&= \E \exp\left\{(1-m^2(\theta_0))i (sS_{0}+tT_{0})+\frac{\theta_0(1-m^2(\theta_0))}{2}S_0-C(\theta_0)\right\}\\
  &=e^{-C(\theta_0)}\E \exp \left\{(1-m^{2}(\theta_0))\Big(is+\frac{\theta_0}{2}\Big)S_0+it T_0\right\}\\
    &=e^{-C(\theta_0)}\E \exp\left\{(1-m^{2}(\theta_0))\Big(is+\frac{\theta_0}{2}\Big)(\sum\limits_{j=2}^{\infty}\lambda_{j}(Y_{j}-1)-1+W^*)+it(1-m^{2}(\theta_0))\big(\sum\limits_{j=2}^{\infty}\lambda_{j}^{2}Y_{j}+\kappa\big)\right\} \\
    &=e^{-C(\theta_0)}\exp\Big\{-(1-m^2(\theta_0))\Big(is+\frac{\theta_0}{2}\Big)+i\kappa t(1-m^2(\theta_0))\Big\}\E \exp\left\{(1-m^2(\theta_0))\big(is+\frac{\theta_0}{2}\big)W^*\right\}\\
    & \prod_{j=2}^\infty \exp\Big\{-(1-m^2(\theta_0))\Big(is+\frac{\theta_0}{2}\Big)\lambda_j\Big\}\E \exp\left\{ (1-m^2(\theta_0))\Big(is+\frac{\theta_0}{2}\Big)\lambda_j Y_j+(1-m^2(\theta_0))t\lambda_j^2 Y_j\right\}\\
    &=e^{-C(\theta_0)}\exp\left\{(1-m^{2}(\theta_0))\Big(it\kappa-is-\frac{\theta_0}{2}\Big)+(1-m^{2}(\theta_0))^{2}\Big(is+\frac{\theta_0}{2}\Big)^{2}\kappa\right\}\\
    &\prod\limits_{j=2}^{\infty}\frac{\exp\Big\{-(1-m^2(\theta_0))\Big(is+\frac{\theta_0}{2}\Big)\lambda_j\Big\}}{\sqrt{1-(1-m^{2}(\theta_0))\big(\theta_0\lambda_{j}+2i\lambda_{j}s+2i\lambda_{j}^{2}t)}}\\
    &= e^{(1-m^{2}(\theta_0))(it\kappa-is)+(1-m^{2}(\theta_0))^{2}(i\theta_0 s\kappa-s^{2}\kappa)}\prod\limits_{j=2}^{\infty}\frac{e^{-(1-m^2(\theta_0))\Big(is+\frac{\theta_0}{2}\Big)\lambda_j}}{\sqrt{1-(1-m^{2}(\theta_0))\Big(1-2i(1-m^{2}(\theta_0))\frac{\lambda_{j}s+\lambda_{j}^{2}t}{1-\theta_0(1-m^{2}(\theta_0))\lambda_{j}}\Big)}},
    \end{align*}
    where in the last step we use the formula for $C(\theta_0)$ from Lemma \ref{Curie_weiss_LE} part (a). The last display above can be checked to be the joint characteristic function of  \begin{align*}
    (1-m^2(\theta_0))&\bigg[\sum\limits_{j=2}^{\infty}\lambda_{j}\Big(\frac{Y_{j}}{1-\theta_0(1-m^{2}(\theta_0))\lambda_{j}}-1\Big)-1+(1-m^{2}(\theta_0))\theta_0\kappa+W^*,\\
    &\sum\limits_{j=2}^{\infty}\frac{\lambda_{j}^{2}Y_{j}}{1-\theta_0(1-m^{2}(\theta_0))\lambda_{j}}+\kappa\bigg],
    \end{align*}
    and so the proof of \eqref{eq:all_regimes2} is complete.

\end{proof}

\begin{proof}[Proof of Lemma \ref{lem:normalizing_ising}]
Using Lemma \ref{Curie_weiss_LE} part (a), it suffices to replace $Z_n(\theta_n,Q_n)-Z_n(\theta_0,Q_n)$ by $Z_n(\theta_n,{\rm CW})-Z_n(\theta_0,{\rm CW})$. But this is exactly what was proved in Lemma \ref{XBX_CW} part (a)(ii) and part (b)(ii).

 \end{proof}

	\end{document}